\newcommand{\pdfgraphics}{\ifpdf\DeclareGraphicsExtensions{.pdf,.jpg}\else\fi}
\definecolor{hanblue}{rgb}{0.27, 0.42, 0.81}
\definecolor{red}{rgb}{1.0, 0.0, 0.0}
\newcommand{\res}{\mathop{\hbox{\vrule height 7pt width .5pt depth 0pt
\vrule height .5pt width 6pt depth 0pt}}\nolimits}
\renewcommand{\div}{{\rm div}\,}
\newcommand{\R}{\mathbb{R}}
\newcommand{\N}{\mathbb{N}}
\newcommand{\Ha}{\mathcal{H}}
\newcommand\restr[2]{{
  \left.\kern-\nulldelimiterspace 
  #1 
  \vphantom{\big|} 
  \right|_{#2} 
  }}
\numberwithin{equation}{section}
\theoremstyle{plain}
\newtheorem{teo}{Theorem}[section]
\newtheorem{prob}[teo]{Problem}
\newtheorem{lemma}[teo]{Lemma}
\newtheorem{prop}[teo]{Proposition}
\newtheorem{cor}[teo]{Corollary}
\theoremstyle{definition}
\newtheorem{dfnz}[teo]{Definition}
\theoremstyle{remark}
\newtheorem{rem}[teo]{Remark}
\newtheorem{ex}[teo]{Example}
\begin{document}
\pdfgraphics 

\pdfgraphics 

\title{Calibrations for minimal networks in a covering space setting}

\author{
Marcello Carioni \footnote{Institut f\"ur Mathematik, Karl--Franzens--Universit\"at, Heinrichstrasse 36, 8010, Graz,
Austria}
\and 
Alessandra Pluda \footnote{Dipartimento di Matematica, Universit\`a di Pisa, Largo Bruno Pontecorvo 5, 56127, Pisa, Italy}}
\date{}

\maketitle

\begin{abstract}
\noindent
In this paper we define a notion of calibration for an 
approach  
to the classical Steiner problem in a covering space setting and we give some explicit examples.
Moreover we introduce the notion of calibration in families:
the idea is to divide the set of competitors in a suitable way,
defining an appropriate (and weaker) notion of calibration.
Then, calibrating the candidate minimizers
 in each family and comparing their perimeter, it is possible to find the minimizers of the minimization problem. 
Thanks to this procedure we prove the minimality of the Steiner configurations spanning  
the vertices of a regular hexagon
and of a regular pentagon.
\end{abstract}

\date{}

\maketitle

\begin{center}
{\small \textbf{Keywords}: Minimal partitions,  Steiner problem, covering spaces, calibrations}

\vspace*{1mm}

{\small\textbf{Mathematics Subject Classification (2010):}
49Q20, 49Q05, 57M10}
\end{center}

\section{Introduction}
The classical Steiner problem, whose first modern formulation can be found in~\cite{courant}, 
can be stated as follows: given a collection $S$ of $m$ points of $\R^n$, 
find the connected set that contains $S$ with minimal length, namely
\begin{equation}\label{ste}
\inf  \{\Ha^1(K) : K \subset \R^n, \mbox{ connected and such that } S \subset K\}\,
\end{equation}
(we refer to~\cite{Ivan} for a survey on the topic).
In its highest generality the problem can be stated replacing the ambient space $\R^n$
with any metric space~\cite{paopaostepanov}.
From a computational point of view 
the Steiner problem is NP-hard, hence,
it is interesting to attack it
in new ways 
(for example by 
 multiphase 
approximations \`{a} la Modica--Mortola~\cite{belpaover, orlandi, sant2, chambolle, sant})
in order to improve
the rate of convergence of the 
algorithms.
From a theoretical perspective
the solution of the Steiner problem 
by variational methods 
has received an increasing interest
starting from several results of the 90's, 
which establish an equivalence between the Steiner problem and
the minimal partition problem~\cite{ab1,ab2, morel, tamanini}, and 
ending to more recent approaches, such as
currents or vector valued BV functions defined on a covering space~\cite{cover, brakke},
currents with coefficients in a group~\cite{annalisaandrea}, 
rank-one tensor valued measures~\cite{orlandi}.

\medskip

The first approach via covering space to the Steiner problem, and more in general to Plateau's type problems,
is due to Brakke~\cite{brakke}.
Having in mind a possible candidate minimizer
(unoriented soap films, soap films with singularities, soap films touching only part of a knotted curve)
for a certain Plateau's type problem 
he introduces a double covering called \emph{pair covering space}
chosen compatibly to the soap film he wants to obtain as a minimizer.
Then he minimizes the mass of integral currents defined on it,
and only in some special cases, via a calibration argument, he proves
that the minimizer coincides with his candidate.
As a consequence of this \emph{ad hoc} approach, 
he did not give
an explicit proof of the equivalence 
with the Steiner problem; 
however his setting allows to describe    
a great variety of different objects, for istance soap films in higher dimension. 
In~\cite{cover} the authors revive Brakke's covering space approach, 
constructing an $m$--sheeted covering space 
$Y_\Sigma$ of $\mathbb{R}^2\setminus S$ and minimizing the total variation 
of vector valued $BV$ functions on  $Y_\Sigma$ satisfying a certain constraint. 
In this setting the authors prove the equivalence between their minimization problem
and the Steiner problem in the plane.

\medskip

The first part of this paper is devoted to an improvement of the  
result in~\cite{cover} reducing the vector valued problem in~\cite{cover} to a scalar one: we minimize the \emph{perimeter} 
in a family of finite perimeter sets in $Y_\Sigma$
instead of the total variation of constrained vector valued $BV$ functions.
In particular we state the following minimization problem in $Y_\Sigma$:
\begin{equation}\label{minpro2}
\mathscr{A}_{constr}(S) = \inf \left\{P(E) : E\in\mathscr{P}_{constr}(Y_\Sigma)\right\}\,,
\end{equation}
where $\mathscr{P}_{constr}(Y_\Sigma)$ is the space of 
sets of finite perimeter in $Y_\Sigma$ 
satisfying a suitable  constraint.
Once proved the existence of minimizers
we show an equivalence between our minimization problem
and the classical Steiner problem (we refer the reader to the beginning of Section~\ref{equivalence}
for further explanations).

\medskip

In the second part of the paper we introduce a notion of calibration suitable to our setting.
In the context of minimal surfaces, a \emph{calibration} 
for a $k$--dimensional oriented manifold in $\R^{n+1}$
is a closed $k$-form $\omega$ such that $\vert\omega\vert\leq 1$ (the so--called size condition) and $\langle \omega, \xi\rangle = 1$, where $\xi$ is the unit $k$-vector orienting the manifold.
It is easy to see that the existence of a calibration for a certain manifold
implies that the manifold is area minimizing in its homology class. 
In Definition~\ref{caliconvering}
we adapt this notion to our setting taking advantage of the theory of Null-Lagrangians \cite{ab1, ab2}: a calibration for $E \in \mathscr{P}_{constr}(Y_\Sigma)$ is a divergence-free vector field defined on the covering space $Y_\Sigma$ 
such that $\int_{Y_\Sigma} \Phi \cdot D\chi_E = P(E)$ and
a suitable size condition for $\Phi$ is fulfilled (see Remark \ref{twoinsteadofone}). 
As for
minimal surfaces, we show that the existence of a calibration for a set $E\in \mathscr{P}_{constr}(Y_\Sigma)$ implies
that it is a minimizer 
 to~\eqref{minpro2}. 
In order to show the usefulness of our theory we give the explicit examples of calibrations for the Steiner minimal configuration connecting two points, 
three points located at the vertices of any triangle and for the four vertices of a square.

\medskip

A notion of calibration for the partition problem  
was firstly introduced by Morgan and Lawlor in~\cite{lawmor}:
their \emph{paired calibration} technique allows 
to prove the minimality of soap films among all the competitors 
that split the domain in a fixed number of regions.
In the context of the Steiner problem
the limit of this approach is that 
it can be applied only when
the points of $S$ 
belong to
the boundary of a convex set.
As mentioned previously,
a notion of calibration, adapted to the covering space approach,  is proposed also in~\cite{brakke}.
Finally, Marchese and Massaccesi in~\cite{annalisaandrea} 
describe Steiner trees using currents with coefficient in groups 
and rephrase the Steiner problem as a minimum problem for the mass of currents.
In this way they were able to introduce a related notion of calibration  (see also \cite{coveringbis} for a comparison of the different notions).
Both~\cite{brakke} and~\cite{annalisaandrea}  have a companion paper devoted
to numerical results (see~\cite{brakke2} and~\cite{MasOudVel16}).
We underline that our approach, 
as the one introduced in~\cite{brakke, annalisaandrea}, 
does not require that the points of $S$ lie on the boundary of a convex set.

\medskip

The goal of the last part of the paper is to tackle the minimality of the Steiner minimal configurations for the vertices of a regular pentagon and of a regular hexagon using the theory of calibrations.

We remind that the explicit minimizers for the Steiner problem, if the points of $S$ are
the vertices of regular $n$-gon, are well known.
In particular for
$n\geq 6$ the Steiner minimal network is the polygon without an edge.
The first proof for the cases $n\leq 6$ and $n\geq 13$
is due to Jarnik and K\"ossler in 1934~\cite{jarnik}. 
Fifty years later,
Du, Hwang and Weng proved
the remaining cases~\cite{Hwang}.

\medskip

The main purpose of
searching for a calibration
is to have an easy argument to show the minimality of a certain candidate.
Unfortunately 
finding a calibration is in general not an easy task
and only very few example are known even for
the Steiner problem. 
As already anticipated, in this work we propose a calibration argument to prove in a strikingly easy way the minimality for the Steiner minimal configuration connecting the vertices of the regular pentagon and of the regular hexagon. 

The interest of our technique goes beyond these specific results because it can be generalized to
arbitrary configurations of points in $\R^2$ and suggests how 
to ``decompose" the Steiner problem in several simpler convex problems that can be solved (and calibrated) separately. 
Moreover the authors believe that a similar idea could be applied to different variational problems.
The idea is to divide the set of competitors 
in different families, denoted by $\mathcal{F}(\mathcal{J})$,
and define an appropriate notion of calibration in each family
with a  weaker size condition.
To be more precise all the competitors that belong to the same family
share a property related to the projection of their essential boundary onto the base set $M$:
for certain couples of indices $(i,j)$ in $\{1,\ldots,m\}\times\{1,\ldots,m\}$
the intersection of the projections onto $M$ of the boundary of the part of
the set $E$ 
in the $i$--th sheet and in the $j$--th sheet is $\mathcal{H}^1$--negligible.
As a consequence,  the definition of calibration in a family does not require to
verify the size condition for the pairs of sheets associated to these couples of indices $(i,j)$.
Once identified a candidate minimizer in each family, 
we calibrate them
and in conclusion we compare their energy
to find the explicit global 
minimizers of Problem~\eqref{minpro2}. 
Thanks to this procedure we prove the minimality of the Steiner configurations spanning  
the vertices of a regular hexagon (the hexagon without one edge)
and of the regular pentagon.

\medskip

Finally we outline the structure of the paper:
in the beginning of Section~\ref{equivalence}
we summarize the setting introduced in~\cite{cover} 
describing the construction of the covering space and we define finite perimeter sets on it. 
In Subsection~\ref{problem} we introduce the space $\mathscr{P}_{constr}(Y_\Sigma)$ and 
prove the existence of minimizers for Problem~\eqref{minpro2}.
Then, in Theorem~\ref{regularity}, we present a regularity result for the essential boundary of minimizers 
proving a \emph{local} equivalence 
with the problem of minimal partitions~\cite{ab1,ab2, morel, tamanini}.
We conclude Section~\ref{equivalence} proving the equivalence between 
the classical Steiner problem and our minimization problem~\eqref{minpro2}.

In Section~\ref{seccal}, after giving the definition of calibration, 
we show that the existence of a calibration for $E \in \mathscr{P}_{constr}(Y_\Sigma)$ 
implies minimality of $E$ with respect to~\eqref{minpro2}. We construct explicit examples of  calibration for the Steiner minimal configuration connecting two points, three points and the four vertices of a square.

In Section~\ref{famiglie} we develop the notion of calibration in families
and use this tool to prove 
the minimality of the Steiner minimal configurations spanning  
the vertices of a regular hexagon and of a regular pentagon.

\subsection*{Acknowledgements}
The authors are warmly grateful to Giovanni Bellettini and Maurizio Paolini for several discussions and helpful conversations.
Once this manuscript was already completed, 
the authors have been 
told by Frank Morgan
that the idea about separating the competitors into 
classes for the hexagon problem was found independently
by Gary Lawlor and presented at Lehigh University in 1998, but was not published 
even in preprint form.
The authors would like to thank 
Frank Morgan and Gary Lawlor for their remarks about the history of the problem and for their kind mail correspondence.

\section{Equivalence between Problem~\eqref{minpro2} and the Steiner problem}\label{equivalence}

In this section
we consider the construction of the $m$--sheeted covering space 
$Y_\Sigma$ of $M:=\mathbb{R}^2\setminus S$ presented in~\cite{cover}
and we define sets of finite perimeter on the covering.
In particular we define the space
$\mathscr{P}_{constr}(Y_\Sigma)$ of
the sets of finite perimeter $E$ in $Y_\Sigma$ 
satisfying a suitable  boundary condition at infinity 
and
such that 
for almost every $x$ in the base space there exists exactly one point $y$ of $E$
such that $p(y)=x$, where $p$ is the projection onto the base space 
(see Definition~\ref{zerounoconstr}).
Notice that for every set $E\in \mathscr{P}_{constr}(Y_\Sigma)$ 
it is possible to show a 
 formula (Proposition \ref{dueh}) that relates its perimeter 
to the $\mathcal{H}^1$-measure of the projection onto the base space $M$
of its essential boundary.
In Subsection \ref{problem} we state the minimization problem~\eqref{minpro2} 
and we prove existence and non--triviality of the minimizers. 


The last part of the section is devoted to the
equivalence between our minimization problem
and the classical Steiner problem.
On one side it is enough to show that given a minimizer
for Problem~\eqref{minpro2}, the network
obtained as the closure of the projection onto $M$
of the essential boundary is a competitor for the Steiner problem.
Roughly speaking 
given the $m$ points of $S$ in $\mathbb{R}^2$,
the covering space of $M$
is constructed in such a way that  
if we consider
a closed curve $\gamma$ (namely a loop) with index one with respect to at most $m-1$
points and with index zero with respect to at least one point of $S$, then
$p^{-1}(\gamma)$ is connected.
Combining this property of the covering with the constraint on the set
it is possible to show that the set 
$S$ is contained in a connected component of 
the closure of the projection of the essential boundary. 
On the other hand  we describe a procedure to construct
a set in $\mathscr{P}_{constr}(Y_\Sigma)$ from a minimal Steiner graph.


\subsection{Construction of the covering space}\label{setcov}

For the rest of the paper we consider $S = \{p_1,\ldots,p_m\}$  a finite set of points
in $\mathbb{R}^2$ and $M \coloneqq \R^2 \setminus S$. Moreover we fix an open, 
smooth and bounded set $\Omega \subset \R^2$ such that $\mbox{Conv}(S)\subset \Omega$, where $\mbox{Conv}(S)$ is the convex envelope of $S$.

\begin{dfnz}[Admissible cuts]\label{admissible}
We denote by 
$\mbox{Cuts}(S)$ the set of all 
$\Sigma \coloneqq \bigcup_{i=1}^{m-1} \Sigma_i \subset \Omega$ such that:
\begin{itemize}
\item[(a)] for $i=1,\ldots,m-1$, $\Sigma_i$ is a Lipschitz simple curve starting at $p_i$ and ending at $p_{i+1}$;
\item[(b)] if $m>2$ then $\Sigma_i \cap \Sigma_{i+1}=\{p_{i+1}\}$ for $i=1,\ldots,m-2$;
\item[(c)] $\Sigma_i \cap \Sigma_l = \emptyset$ for any $i,l=1,\ldots,m-1$ such that $|l - i|>1$.
\end{itemize}
Moreover we denote by $\mbox{\textbf{Cuts}}(S)$ the set of all pairs ${\bf{\Sigma}} \coloneqq (\Sigma,\Sigma')$ such that
\begin{itemize}
\item[(i)] $\Sigma,\Sigma' \in \mbox{Cuts}(S)$ and $\Sigma\cap \Sigma' = S$;
\item[(ii)] if $m>2$, for every  $i=2,\ldots,m-1$  let $C_i(p_i,\varepsilon)$ be
a circle centered at $p_i$ with radius $\varepsilon$ such that $C_i(p_i,\varepsilon)\cap \Sigma_{i-1}\neq\emptyset$ and $C_i(p_i,\varepsilon)\cap \Sigma_{i}\neq\emptyset$.
Denote by $x_i$ (resp $y_i$) the intersection of $C_i$ with $\Sigma_{i-1}$ (resp. with $\Sigma_{i}$).
Then there exists an arc of $C_i$ connecting $x_i$ and $y_i$ and not intersecting $\Sigma'$. 
\end{itemize}
\end{dfnz}

Fix ${\bf\Sigma}=(\Sigma,\Sigma') \in \mbox{\textbf{Cuts}}(S)$ and define
\begin{equation*}
D\coloneqq\mathbb{R}^2 \setminus \Sigma\,, \quad\quad D'\coloneqq\mathbb{R}^2 \setminus \Sigma'\,
\end{equation*}
and
\begin{equation*}
X = \bigcup_{j=1}^m(D,j)\;\, \cup \bigcup_{j'=m+1}^{2m} (D',j')
\end{equation*}
that is the space made of $m$ disjoint copies of $D$ and of $D'$.

\begin{figure}[H]
\centering
\begin{tikzpicture}[scale=0.7]
\node[inner sep=0pt] at (0,0)
    {\includegraphics[width=0.6\textwidth]{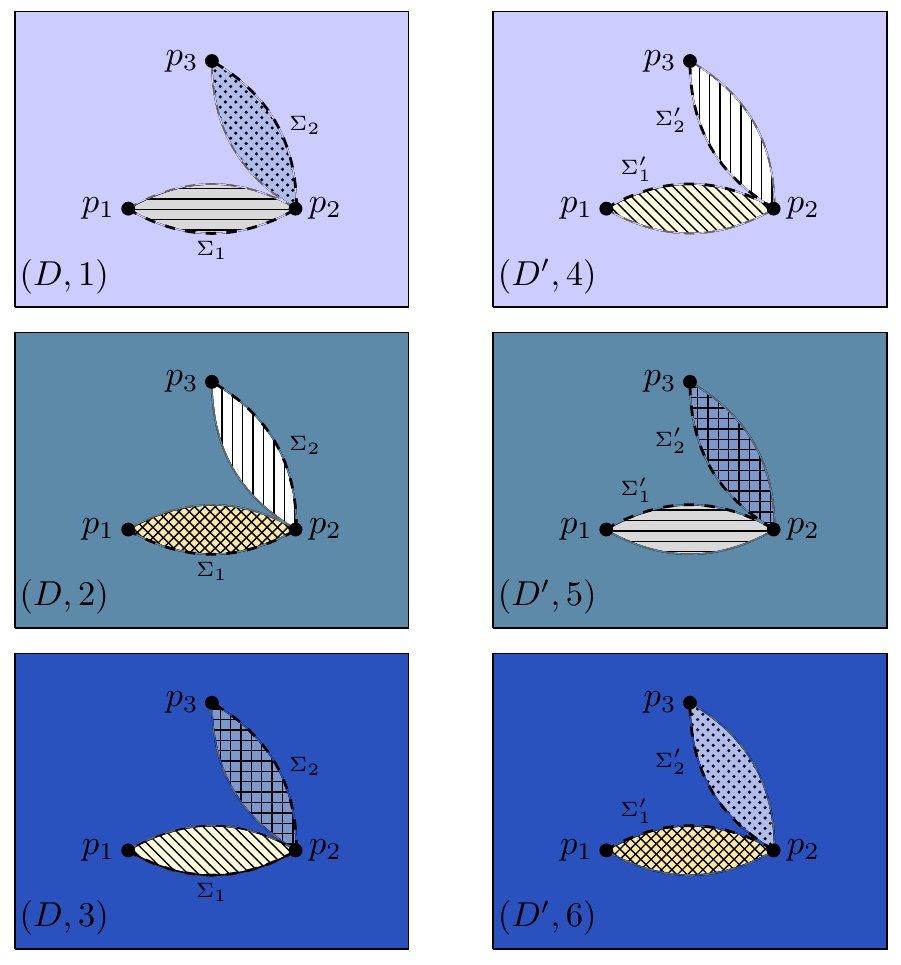}};
\end{tikzpicture}
\caption{An explicit construction of $X$ when $m=3$ and $S=\{p_1,p_2,p_3\}$. 
The regions identified 
by the equivalence relation $\sim$ are represented with the same pattern and color.}
\end{figure}

Let $I_i$ be the open, bounded set enclosed by $\Sigma_i$ and $\Sigma_i'$ and $O = \mathbb{R}^2 \setminus \bigcup_{i=1}^{m-1} \overline{I_i}$.
Given $(x,j) \in (D,j)$ with $j\in\{1,\ldots,m\}$ and $(x',j') \in (D',j')$ with $j'\in\{m+1,\ldots,2m\}$, 
we define the equivalence relation $\sim$ in $X$ as 
$(x,j)\sim (x',j')$ if and only if one of the following conditions holds:
\begin{equation}\label{ide2}
\left\{\begin{array}{ll}
j\equiv j'\ (\mbox{mod}\ m),& x=x' \in O\,, \\
j\equiv j'-i\ (\mbox{mod}\ m), & x=x'\in I_i,\ i=1,\ldots , m-1\,.
\end{array}
\right.
\end{equation}

\begin{dfnz}
We define  $Y_\Sigma$ to be  
the topological quotient space induced by $\sim$, i.e.
$$
Y_\Sigma\coloneqq X \Big{/}\sim\,.
$$
\end{dfnz}
Finally we denote by $\tilde \pi: X \rightarrow Y_\Sigma$ the projection induced by the equivalence relation,
by $\pi$ the projection from $X$ to the space $M$ 
and by $p: Y_\Sigma \rightarrow M$ the map that makes the following diagram commutative:
\begin{equation*}
\xymatrix{
X  \ar[dr]_{\pi} \ar[r]^{\widetilde{\pi}} & Y_{\Sigma} \ar[d]^{p}\\
& M}
\end{equation*} 

\begin{prop}
The map $p: Y_\Sigma \rightarrow M$ is well-defined and the pair $(Y_\Sigma, p)$
is a covering space of $M$.
\end{prop}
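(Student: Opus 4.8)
The plan is to verify separately that $p$ is well defined and continuous, and that every point of $M$ possesses an evenly covered neighbourhood.

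The well-definedness is the easy part. By inspection of~\eqref{ide2}, the relation $\sim$ only identifies points of $X$ lying over the same base point, since both alternatives require $x=x'$. Hence $\pi\colon X\to M$ is constant on the $\sim$-classes, and by the universal property of the topological quotient it descends to a unique continuous map $p\colon Y_\Sigma\to M$ with $p\circ\widetilde\pi=\pi$, which is precisely the map making the diagram commute. Surjectivity is immediate from $D\cup D'=\mathbb{R}^2\setminus(\Sigma\cap\Sigma')=M$, so that already $\pi$, and hence $p$, is onto.

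For the covering property I would fix $x_0\in M$ and produce a small ball $U\ni x_0$ that is evenly covered. The key simplification is that, since $\Sigma\cap\Sigma'=S$ and $x_0\notin S$, the point $x_0$ lies off at least one of the two cuts, which leaves three cases. If $x_0\notin\Sigma\cup\Sigma'$, then a ball $U$ with $\overline U\subset M\setminus(\Sigma\cup\Sigma')$ lies entirely in $D\cap D'$ and in a single region ($O$ or some $I_i$); thus $\pi^{-1}(U)$ consists of $2m$ disjoint copies of $U$, glued by $\sim$ in pairs into $m$ disjoint open sheets, each carried homeomorphically onto $U$ by $p$. If $x_0\in\Sigma\setminus\Sigma'$, then $x_0$ is an interior point of a unique arc $\Sigma_i$; choosing $U$ small enough that $\overline U$ avoids $\Sigma'$, the other arcs $\Sigma_k$ ($k\neq i$) and $S$, the arc $\Sigma_i$ locally separates $U$ into $U^+:=U\cap I_i$ and $U^-:=U\cap O$. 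Here $U\subset D'$, so each $D'$-copy contains a full copy of $U$, whereas each $D$-copy contains only $U^+\sqcup U^-$; the two gluing rules in~\eqref{ide2} identify every piece coming from a $D$-copy with a unique piece of a $D'$-copy, so that $p^{-1}(U)=\bigsqcup_{j'=m+1}^{2m}\widetilde\pi\big((U,j')\big)$ is a disjoint union of $m$ open sheets, each homeomorphic to $U$. The case $x_0\in\Sigma'\setminus\Sigma$ is symmetric, exchanging the roles of $(\Sigma,D)$ and $(\Sigma',D')$.

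Throughout, the disjointness of the sheets and the sheet-wise homeomorphism rest on the elementary remark that $\sim$ never identifies two $D$-copies, nor two $D'$-copies, with each other, and that over each connected region ($O$ or a fixed $I_i$) the prescription in~\eqref{ide2} sets up a bijection between the $m$ copies of $D$ and the $m$ copies of $D'$. I expect the only genuinely delicate point to be the local separation statement in the second case: one must check, using that $\Sigma_i\cup\Sigma_i'$ is a Jordan curve enclosing $I_i$ and that distinct cuts meet only at points of $S$, that a sufficiently small $U$ is split by $\Sigma_i$ into exactly one part inside $I_i$ and one part inside $O$, so that the two different identifications of~\eqref{ide2} apply on the two sides and recombine the partial lifts into complete sheets.
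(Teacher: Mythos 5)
The paper states this proposition without any proof, deferring the verification to the reference \cite{cover} where the covering construction originates, so there is no in-paper argument to compare yours against. Your proof is correct and is the standard verification one would expect: well-definedness and continuity of $p$ from the fact that $\sim$ only identifies points over the same base point together with the universal property of the quotient; surjectivity from $D\cup D'=\R^2\setminus(\Sigma\cap\Sigma')=M$; and the evenly-covered neighbourhoods obtained by the three-case analysis according to whether $x_0$ misses both cuts or lies on exactly one of them, using that $\sim$ only ever pairs a $D$-copy with a $D'$-copy and does so bijectively over each region $O$, $I_i$. The one point I would phrase more carefully is the ``local separation'': for a merely Lipschitz arc $\Sigma_i$ the two pieces $U\cap I_i$ and $U\cap O$ need not be connected, but connectivity is irrelevant to your argument --- all that is needed is the set-theoretic identity $U\setminus\Sigma_i=(U\cap I_i)\sqcup(U\cap O)$, which holds once $U$ is chosen to avoid $\Sigma'$, the other arcs $\Sigma_k$, the points of $S$, and the closures $\overline{I_l}$ for $l\neq i$ (the last condition being available because distinct cuts meet only at points of $S$ and the regions $I_l$ are pairwise disjoint in the admissible configuration). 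With that rephrasing the gluing of the partial lifts $(U\cap O,j)$ and $(U\cap I_i,j'')$ into the full sheets $\widetilde\pi((U,j'))$ goes through exactly as you describe.
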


\begin{rem}
The covering $(Y_\Sigma,p)$ of $M$ inherits the local structure of $M$, 
hence, in our case, it is a Riemannian manifold.
\end{rem}
In order to be able to work on Euclidean spaces we define 
the following natural local parametri-\\
zations of $Y_\Sigma$.
\begin{dfnz}[Local parametrizations]\label{local}
For every $j=1,\ldots\,m$
we define the local parame-\\
trizations $\psi_j:D\to\widetilde{\pi}\left((D,j )\right)$ 
as
$$
\psi_j(x)\coloneqq \widetilde{\pi}\left(\left(x,j\right)\right)\;\;\text{for every}\;x\in D\,,
$$
or equivalently as
$$
\psi_j(x)\coloneqq \left(\restr{p}{\tilde \pi ((D,j))}\right)^{-1}(x)\;\;\text{for every}\;x\in D\,.
$$
The local parametrizations $\psi_{j'}:D'\to\widetilde{\pi}\left((D',j' )\right)$, 
for $j'=m+1,\ldots\,2m$ are analogously defined. 
\end{dfnz}

%
%
%
%
%

\subsection{Sets of finite perimeter on the covering space}

It is natural to endow the space $Y_\Sigma$ with a measure $\mu$ defined as the sum on every sheet of the covering space of the push-forward by the local parametrization introduced in Definition~\ref{local} of the Lebesgue measure $\mathscr{L}^2$. Given a Borel set $E\subset Y_\Sigma$ we define
\begin{equation*}
\mu(E) \coloneqq \sum_{j=1}^m \psi_{j\#}\mathscr{L}^2(E\cap \tilde \pi((D,j)))\,.
\end{equation*}

We define the pullback of a function $f$ 
by the local parametrizations $\psi_j$ and $\psi_{j'}$ in the following way:  
\begin{dfnz}\label{pullback}
Consider a function $f:Y_{\Sigma}\to \R^n$. For $j=1,\ldots,m$ and $j'=m+1,\ldots\,2m$ we let 
$f^j:D\to \mathbb{R}^n$, $f^{j'}:D'\to \R^n$ be the maps defined by
\begin{equation*}
f^j\coloneqq f\circ \psi_j \qquad \text{and} \qquad 
f^{j'}\coloneqq f\circ \psi_{j'}\,.
\end{equation*}
\end{dfnz}
By construction, thanks to the identifications given by $\sim$, we have 
\begin{equation}\label{ide}
\left\{\begin{array}{lll}
f^j=f^{j'} 
&\text{in}\; O
&\text{if}\; j\equiv j'\ (\mbox{mod}\ m)\,, \\
f^j=f^{j'} 
&\text{in}\; I_i
&\text{if}\; j\equiv j'-i\ (\mbox{mod}\ m),\ i=1,\ldots , m-1\,.   \\
\end{array}
\right.
\end{equation}

From now on by $\chi_E$ we mean the characteristic function of the set $E$.

\begin{dfnz}
Given a $\mu$--measurable set $E \subset Y_\Sigma$
we define 
\begin{equation*}
E^j:=\{x\in D \,: \, \chi^j_E(x)=1\}=p(E\cap \tilde{\pi}((D,j)))
\end{equation*}
for $j\in\{1,\ldots,m\}$ and  analogously 
\begin{equation*}
E^{j'}:=\{x\in D' \,: \, \chi^{j'}_E(x)=1\}=p(E\cap\tilde{\pi}((D',j')))
\end{equation*}
 for $j'\in\{m+1,\ldots,2m\}$.
\end{dfnz}

\begin{rem}\label{presc}
It is useful sometimes to define functions $f$ (resp. sets $E$)
on the covering space $Y_\Sigma$ prescribing first the parametrizations $f^j : D \rightarrow \R^n$ 
(resp. sets $E^j$)  for every $j=1\ldots,m$ and then deducing $f^{j'}$ in 
$D' \setminus \Sigma$ (resp. sets $E^{j'}$)  for every $j'=m+1,\ldots,2m$, according to \eqref{ide}. 
\end{rem}


We set $L^1(Y_\Sigma) \coloneqq L^1(Y_\Sigma;\R;\mu)$ 
and analogously, $L^1_{loc}(Y_\Sigma) \coloneqq L^1_{loc}(Y_\Sigma;\R;\mu)$. 
We also define the distributional gradient of a function 
$u\in L^1(Y_\Sigma)$ as the linear map
\begin{equation*}
Du(\psi) = -\int_{Y_\Sigma} u\, \div \psi \, d\mu
\end{equation*} 
for $\psi \in C_c^1(Y_\Sigma, \R^2)$, where the space $C_c^1(Y_\Sigma, \R^2)$ is defined in the natural way by the local parametrizations. 
\begin{dfnz}
Given $u\in L^1(Y_\Sigma)$ we say that $u \in BV(Y_\Sigma)$ if $Du$ 
is represented by a Radon measure with bounded total variation.
\end{dfnz}

%

\begin{dfnz}
Given $E\subset Y_\Sigma$ a $\mu$-measurable set and $\Lambda \subset Y_\Sigma$ open, we define
\begin{equation}\label{perimeter}
P(E, \Lambda):=\vert D\chi_{E}\vert (\Lambda) = \sup \left\{\int_E \div\psi \, d\mu : \psi \in C_c^1(\Lambda, \R^2) ,
\|\psi\|_\infty\leq 1\right\}\,.
\end{equation}
We say that $E$ is a set of finite perimeter in $\Lambda$ if $P(E,\Lambda) <\infty$. In the case $E$ is of finite perimeter in $Y_\Sigma$, the definition of $P$ can be extended to all Borel sets $\Lambda \subset Y_\Sigma$ and $\Lambda \rightarrow P(E, \Lambda)$ is a Borel measure in $Y_\Sigma$.
\end{dfnz}

For every $t\in [0,1]$ define the set $E^t \subset Y_\Sigma$ as
\begin{equation*}
E^t = \left\{x\in Y_\Sigma : \lim_{r\rightarrow 0}\frac{\mu(E \cap B_r(x))}{\mu(B_r(x))}=t\right\} \,.
\end{equation*}
We denote by $\partial^{\ast}E$ the essential boundary of $E$ defined as $\partial^{\ast}E = Y_\Sigma \setminus (E^0 \cup E^1)$
(see~\cite[page 158]{afp} for the definition of the essential boundary in the Euclidean setting).

\begin{lemma}[Representation formula for the perimeter and for $D\chi_E$]\label{represent}
Let $\Lambda$ be a Borel set in $Y_\Sigma$ and
$E$ a set of finite perimeter in $Y_\Sigma$.
Then defining $\Lambda^{j'}_\Sigma:= p(\Lambda \cap \tilde \pi((\Sigma \setminus S, j')))$ we have 
\begin{equation}\label{perimetro}
P(E,\Lambda) = \sum_{j=1}^m P(E^j,\Lambda^j)
+ \sum_{j'=m+1}^{2m} P(E^{j'},\Lambda^{j'}_\Sigma)\,
\end{equation}
and
\begin{equation}\label{derivata}
D\chi_E(\Lambda) = \sum_{j=1}^m D\chi_{E^j} (\Lambda^j) 
+ \sum_{j'=m+1}^{2m} D\chi_{E^{j'}}(\Lambda^{j'}_\Sigma)\,.
\end{equation}
\end{lemma}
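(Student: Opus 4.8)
The plan is to use that $\Lambda \mapsto D\chi_E(\Lambda)$ and $\Lambda \mapsto P(E,\Lambda)=|D\chi_E|(\Lambda)$ are finite Radon measures on $Y_\Sigma$ (here the finite perimeter hypothesis is used) and to evaluate them on the Borel partition of $Y_\Sigma$ determined by the local parametrizations. The first step is to record the decomposition
\begin{equation*}
Y_\Sigma = \Big(\bigsqcup_{j=1}^m \tilde \pi((D,j))\Big) \sqcup p^{-1}(\Sigma \setminus S), \qquad p^{-1}(\Sigma \setminus S) = \bigsqcup_{j'=m+1}^{2m} \tilde \pi((\Sigma \setminus S, j')),
\end{equation*}
with all unions disjoint. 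Indeed, the sets $\tilde \pi((D,j))$ are open and pairwise disjoint, since the relation $\sim$ in~\eqref{ide2} never identifies points lying in two different copies of $D$; as $D=\R^2\setminus\Sigma$, their union is exactly $p^{-1}(M\setminus\Sigma)$. The remaining points of $Y_\Sigma$ lie over $\Sigma\setminus S$, and from $\Sigma\cap\Sigma'=S$ we obtain $\Sigma\setminus S\subset D'$; each such point lies in exactly one copy of $D'$ and is identified with no copy of $D$, because the identifications in~\eqref{ide2} only involve the open regions $O$ and $I_i$, which do not meet $\Sigma\setminus S$. This yields the claimed partition.

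By countable additivity of the vector measure $D\chi_E$ on this partition,
\begin{equation*}
D\chi_E(\Lambda) = \sum_{j=1}^m D\chi_E\big(\Lambda \cap \tilde \pi((D,j))\big) + \sum_{j'=m+1}^{2m} D\chi_E\big(\Lambda \cap \tilde \pi((\Sigma \setminus S, j'))\big),
\end{equation*}
and the same identity holds verbatim for $|D\chi_E|$. It then suffices to read each summand in the corresponding chart. Since $p$ is a local isometry and $\psi_j=(\restr{p}{\tilde \pi((D,j))})^{-1}$, the parametrization $\psi_j\colon D\to\tilde \pi((D,j))$ is an isometry onto its image that reads $\chi_E$ as $\chi_{E^j}$ (by Definition~\ref{pullback} and~\eqref{ide}); hence it transports the restriction of $D\chi_E$ to $\tilde \pi((D,j))$ onto the restriction of $D\chi_{E^j}$ to $D$ by push-forward, and evaluating on $\Lambda\cap\tilde \pi((D,j))$, whose image under $p$ is $\Lambda^j:=p(\Lambda\cap\tilde \pi((D,j)))$, produces the term $D\chi_{E^j}(\Lambda^j)$. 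Running the identical argument with $\psi_{j'}$ on the copies of $D'$ turns the second family of summands into $D\chi_{E^{j'}}(\Lambda^{j'}_\Sigma)$, since by definition $\Lambda^{j'}_\Sigma=p(\Lambda\cap\tilde \pi((\Sigma\setminus S,j')))$ is exactly the image in $D'$ of the seam contained in the $j'$-th copy. This proves~\eqref{derivata}, and carrying out the same computation for $|D\chi_E|$ (equivalently, taking total variations) yields~\eqref{perimetro}.

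The conceptual point on which the whole statement rests, and the step I expect to be the most delicate, is the asymmetry between the two sums, which is governed by the geometry of the gluing near the cut. The $m$ copies of $D$ already cover $Y_\Sigma$ away from $p^{-1}(\Sigma\setminus S)$, so they account for the full contribution of $D\chi_E$ there; but $p^{-1}(\Sigma\setminus S)$ is one-dimensional, hence $\mu$-negligible yet possibly charged by the perimeter measure, because the essential boundary of $E$ may cross the cut $\Sigma$. Such a crossing is invisible to the charts $\psi_j$, whose domain $D$ excludes $\Sigma$, and is detected only through the charts $\psi_{j'}$, whose domain $D'$ contains $\Sigma\setminus S$. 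This is precisely why the second sum restricts the $D'$-charts to the seam via $\Lambda^{j'}_\Sigma$ rather than to all of $D'$: using the $D'$-charts on the whole of $D'$ would double-count the bulk contribution already carried by the $D$-charts. Verifying that the open $D$-charts and the seam together exhaust $Y_\Sigma$ as a disjoint Borel partition, so that no mass is lost or counted twice, is the crux of the argument.
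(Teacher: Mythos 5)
Your proof is correct and follows essentially the same route as the paper's: both decompose $\Lambda$ into the disjoint Borel pieces $\Lambda\cap\tilde \pi((D,j))$ and $\Lambda\cap\tilde \pi((\Sigma\setminus S,j'))$, use additivity of $D\chi_E$ and $|D\chi_E|$, and identify each piece through the fact that $p$ restricted to a single sheet is a (local-structure-preserving) homeomorphism. The only cosmetic difference is that you transport the measures by push-forward under the charts $\psi_j$, $\psi_{j'}$, whereas the paper computes against test functions $\eta\circ p^{-1}$ and invokes outer regularity of $\Lambda\mapsto P(E,\Lambda)$ to handle the non-open seam pieces.
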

\begin{proof}
We notice that a Borel set $\Lambda\subset Y_\Sigma$ 
can be decomposed in the union of the disjoint sets:
\begin{equation}\label{declambda}
\Lambda \cap \tilde \pi((D,j)),\ \ j=1,\ldots m\,,\qquad \Lambda \cap \tilde \pi((\Sigma \setminus S, j')),\ \ j'=m+1,\ldots,2m\,.
\end{equation}
Hence it is enough to prove the statements for a $\Lambda \subset \tilde \pi ((D,j))$ for a fixed $j$
or a $\Lambda \subset \tilde \pi ((\Sigma\setminus S,j'))$
for a fixed $j'$.		
Let us assume without loss of generality that $\Lambda \subset \tilde \pi ((D,j))$ is open. 
Consider $\eta \in C^1(\Lambda)$ and compactly supported in $\Lambda$. Then noticing that as $\restr{p}{\Lambda}$ is bijective we have
\begin{align*}
D\chi_E(\eta)=\int_{\Lambda} \chi_E\, \div \eta \, d\mu =
\int_{p(\Lambda \cap E)} \, \div (\eta\circ p^{-1}) \, d\mathscr{L}^2\,=D \chi_{p(E\cap \Lambda)}(\eta\circ p^{-1})\,.
\end{align*} 
Moreover as $\restr{p}{\Lambda}$ is an homeomorphism it is easy to verify that $\eta  \in C_c^1(\Lambda)$ if and only if $\eta \circ p^{-1} \in C_c^1(p(\Lambda))$. Therefore taking the supremum on $\eta$ we have that
\begin{displaymath}
P(E,\Lambda) = P(p(E\cap \Lambda)) = P(p(E\cap \Lambda\cap \tilde \pi((D,j)))) = P(E^j,\Lambda^j).
\end{displaymath} 
If $\Lambda \subset \tilde \pi ((\Sigma\setminus S,j'))$ the decomposition \eqref{derivata} follows performing the same computation in $(D',j')$ and then using the outer regularity of the measure $\Lambda \rightarrow P(E,\Lambda)$.
Formula \eqref{derivata} can be proven similarly.
\end{proof}


\begin{rem}\label{bvpar}
From the computations of the previous lemma, one can easily see that if
$E$ is a set of finite perimeter in $Y_\Sigma$,
then $E^j$ is  a set of finite perimeter in
$D$ for $j=1\,\dots\,m$ (and, respectively, in a similar way one can show that
$E^{j'}$ is  a set of finite perimeter in
$D'$ for $j'=m+1\,\dots\,2m$).
\end{rem}

\subsection{The constrained minimum problem}\label{problem}
We define our simplified version of the minimization problem introduced by
Amato, Bellettini and Paolini in~\cite{cover}, where we use sets of finite perimeter 
instead of vector valued $BV$ functions.


\begin{dfnz}[Constrained sets]\label{zerounoconstr}
We denote by $\mathscr{P}_{constr}(Y_\Sigma)$ the space of
the sets $E$ of finite perimeter in $Y_\Sigma$ such that 
\begin{itemize}\label{zerouno}
\item [$i)$] $\sum_{p(y) = x} \chi_E(y) = 1\ \ $ for almost every $x\in M =\mathbb{R}^2\setminus S$ ,
\item [$ii)$] $\chi_{E^1}(x) = 1\ \  $ for every $x\in \mathbb{R}^2 \setminus \Omega$.
\end{itemize}
\end{dfnz}

In other words a set $E$ of finite perimeter in $Y_\Sigma$
belongs to $\mathscr{P}_{constr}(Y_\Sigma)$ if
for almost every $x$ in the base space there exists exactly one point $y$ of $E$
such that $p(y)=x$.

\begin{rem}
Notice that it is possible to produce different sets in $Y_\Sigma$ (by a permutation of the sheets) satisfying condition $i)$ in Definition \ref{zerounoconstr} that have the same projection on to the base space.
In order to avoid this unpleasant effect we decide to add the condition $ii)$ in Definition \ref{zerounoconstr}.
\end{rem}


%
%
%
%
%
%
\vspace*{2mm}
We state the constrained minimization problem as follows:
\begin{equation*}
\mathscr{A}_{constr}(S) = \inf \left\{P(E) : E\in\mathscr{P}_{constr}(Y_\Sigma)\right\}\,.
\end{equation*}

\begin{rem}\label{independence}
It can be proved as in~\cite{cover} 
(see also~\cite{cover3, cover2})
 that given ${\bf\Sigma}, \hat{{\bf\Sigma}} \in \mbox{\textbf{Cuts}}(S)$ and
$E \in \mathscr{P}_{constr}(Y_\Sigma)$ there exists $\hat{E} \in \mathscr{P}_{constr}(Y_{\hat{\Sigma}})$ such that $p(\partial^\ast E) = p(\partial^\ast \hat E)$.
This implies that the quantity $\mathscr{A}_{constr}(S)$ is independent on the choice of the cuts ${\bf\Sigma}$. 
\end{rem}


\begin{lemma}[Compactness]\label{comp}
Let $(E_n)_{n\in \N}$ be a sequence of sets in $\mathscr{P}_{constr}(Y_\Sigma)$  such that
\begin{equation*}
\sup_{n\in \N} P(E_n) < + \infty\,.
\end{equation*} 
Then there exists $E\in \mathscr{P}_{constr}(Y_\Sigma)$ and 
a subsequence $(E_{n})_{n \in \N}$ (not relabelled) 
converging to $E$ in $L^1(Y_\Sigma)$ as $n\rightarrow +\infty$.
\end{lemma}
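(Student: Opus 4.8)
The plan is to push the problem down to the individual sheets by means of the representation formula of Lemma~\ref{represent}, apply the classical compactness theorem for sets of finite perimeter on a bounded Euclidean domain, and then reassemble the limit on $Y_\Sigma$, checking that both constraints of Definition~\ref{zerounoconstr} survive the passage to the limit.

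First I would exploit the constraints to confine all the variation to the bounded set $\Omega$. For $j=1,\ldots,m$ write $\chi^j_{E_n}$ for the parametrized characteristic function on $D$. By condition $ii)$ we have $\chi^1_{E_n}=1$ a.e.\ on $\R^2\setminus\Omega$, and then condition $i)$ (which for a.e.\ $x$ reads $\sum_{j=1}^m\chi^j_{E_n}(x)=1$, the fibre over such $x$ consisting of the $m$ points $\psi_1(x),\ldots,\psi_m(x)$) forces $\chi^j_{E_n}=0$ a.e.\ on $\R^2\setminus\Omega$ for every $j\neq 1$. Hence each $\chi^j_{E_n}$ agrees with a fixed function outside $\Omega$. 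Since every summand in \eqref{perimetro} is non-negative, the hypothesis $\sup_n P(E_n)<+\infty$ yields $\sup_n P(E_n^j,D)<+\infty$ for each $j$ (cf.\ Remark~\ref{bvpar}); regarding $\chi^j_{E_n}$ as a function of bounded variation on $\R^2$ — the slit $\Sigma$ adding at most the finite amount $\mathcal{H}^1(\Sigma\cap\overline\Omega)$ to its total variation — we obtain a sequence uniformly bounded in $BV(\R^2)$ which, modulo a fixed function, is supported in the bounded set $\overline\Omega$.

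Next I would apply the standard $BV$ compactness theorem on a fixed bounded open set $\Omega'\supset\overline\Omega$, separately for each $j$, and then a diagonal argument to produce a single subsequence (not relabelled) along which $\chi^j_{E_n}\to u^j$ in $L^1$ and a.e.\ for all $j=1,\ldots,m$ at once. As a.e.\ limits of $\{0,1\}$-valued functions are again $\{0,1\}$-valued, each $u^j=\chi^j_E$ is the parametrized characteristic function of a set $E^j\subset D$ of finite perimeter. I then define $E$ by prescribing these sheets and deducing the remaining $E^{j'}$ through \eqref{ide}, as in Remark~\ref{presc}; since the identifications defining $Y_\Sigma$ are mere relabellings on the regions $I_i$ and $O$, convergence of the $E^j$ transfers automatically to the $E^{j'}$.

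Finally I would verify that $E\in\mathscr{P}_{constr}(Y_\Sigma)$ and that $\chi_{E_n}\to\chi_E$ in $L^1(Y_\Sigma)$. Condition $ii)$ passes to the limit at once, as $\chi^1_{E_n}\equiv 1$ on $\R^2\setminus\Omega$ gives $u^1\equiv 1$ there; condition $i)$ is preserved because $\sum_{j=1}^m\chi^j_{E_n}=1$ a.e.\ is stable under a.e.\ convergence, whence $\sum_{j=1}^m\chi^j_E=1$ a.e. Lower semicontinuity of the perimeter under $L^1$ convergence gives $P(E)\le\liminf_n P(E_n)<+\infty$, so $E$ has finite perimeter. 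The $L^1(Y_\Sigma)$ convergence is then immediate from the definition of $\mu$, since
\[
\int_{Y_\Sigma}|\chi_{E_n}-\chi_E|\,d\mu=\sum_{j=1}^m\int_D|\chi^j_{E_n}-\chi^j_E|\,d\mathscr{L}^2\longrightarrow 0.
\]
The only genuinely delicate point is the confinement step: without condition $ii)$ the mass could escape to infinity and no subsequence would converge, so the argument relies crucially on the variation being localized inside the bounded set $\Omega$.
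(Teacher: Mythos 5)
Your proof is correct and follows essentially the same route as the paper: bound each $P(E_n^j,D)$ via the representation formula \eqref{perimetro}, extract an $L^1$-convergent subsequence sheet by sheet, reassemble $E$ as in Remark~\ref{presc}, and pass the two constraints of Definition~\ref{zerounoconstr} to the limit. The one place you go beyond the paper is in justifying the compactness step itself — using condition $ii)$ together with $i)$ to pin each $\chi^j_{E_n}$ down outside $\Omega$ and absorbing the possible jump across the slit $\Sigma$ into a uniform $BV(\R^2)$ bound — a tightness argument the paper leaves implicit but which is genuinely needed since $D$ is unbounded.
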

\begin{proof}
Notice that, thanks to \eqref{perimetro}, there holds $P(E^j_n, D) \leq P(E_n)< c <\infty$.
Therefore, up to subsequences, $E^j_n\to E^j$ in $L^1(D)$ for every $j=1,\ldots,m$ 
(without relabelling the subsequence). 
The set $E$ is determined according to Remark~\ref{presc}. 
As for every $x\in D$ we have that $p^{-1}(x) = \tilde \pi \circ \pi^{-1}(x) = \bigcup_{j=1}^m \psi_j(x)$, 
the property in Definition~\ref{zerouno} can be rephrased using the local parametrization in the following way: 
\begin{displaymath}
 \sum_{j=1}^m \chi_{E^j_n}(x)=1\,. 
\end{displaymath} 
for almost every $x\in M$.
Hence letting $n \rightarrow +\infty$ we get that $\sum_{i=1}^m \chi_{E^j}(x)=1$ 
for almost every  $x\in M$.
Moreover, as $\chi_{E^1_n}= 1$ in $\mathbb{R}^2 \setminus \Omega$ for every $n$, 
we have $\chi_{E^1}= 1$ in $\mathbb{R}^2 \setminus \Omega$ and therefore 
$E \in \mathscr{P}_{constr}(Y_\Sigma)$.
\end{proof}

\begin{teo}[Existence of minimizers]
There exists a minimizer for Problem \eqref{minpro2}.
\end{teo}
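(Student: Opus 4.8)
The plan is to argue by the direct method of the calculus of variations, the three ingredients being nonemptiness of the admissible class (so that the infimum is finite), compactness of minimizing sequences, and lower semicontinuity of the perimeter. First I would check that $\mathscr{P}_{constr}(Y_\Sigma)$ is nonempty and that the infimum is finite by exhibiting an explicit competitor of finite perimeter. A convenient one is the set $E$ obtained, in the sense of Remark~\ref{presc}, by prescribing $E^1 = D$ and $E^j = \emptyset$ for $j = 2,\ldots,m$, and then deducing the $E^{j'}$ from \eqref{ide}. Since $p^{-1}(x) = \bigcup_{j=1}^m \psi_j(x)$, this set satisfies $\sum_{p(y)=x}\chi_E(y) = \sum_{j=1}^m \chi_{E^j}(x) = \chi_{E^1}(x) = 1$ for almost every $x$, hence condition $i)$ of Definition~\ref{zerounoconstr}; it satisfies $ii)$ because $E^1 = D \supset \R^2\setminus\Omega$. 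By the representation formula \eqref{perimetro} its perimeter reduces to the terms $P(E^{j'},\Sigma\setminus S)$, bounded by $\mathcal{H}^1(\Sigma) < \infty$ since the cuts are Lipschitz curves of finite length. Consequently $0 \le \mathscr{A}_{constr}(S) < \infty$.

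Next I would take a minimizing sequence $(E_n)_{n\in\N} \subset \mathscr{P}_{constr}(Y_\Sigma)$, so that $P(E_n) \to \mathscr{A}_{constr}(S)$ and in particular $\sup_n P(E_n) < \infty$. This is exactly the hypothesis of Lemma~\ref{comp}, so applying the Compactness Lemma I obtain a (not relabelled) subsequence and a set $E \in \mathscr{P}_{constr}(Y_\Sigma)$ with $E_n \to E$ in $L^1(Y_\Sigma)$. It is worth stressing that Lemma~\ref{comp} already delivers the limit \emph{inside} the constrained class: both the fiber constraint $i)$ and the boundary condition $ii)$ pass to the limit, so no separate argument is needed to keep $E$ admissible.

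It then remains only to prove the lower semicontinuity $P(E) \le \liminf_n P(E_n)$. Here I would use the variational characterization \eqref{perimeter}: for each fixed $\psi \in C_c^1(Y_\Sigma,\R^2)$ with $\|\psi\|_\infty \le 1$, the map $F \mapsto \int_F \div\psi \, d\mu = \int_{Y_\Sigma} \chi_F\,\div\psi\,d\mu$ is continuous for $L^1(Y_\Sigma)$ convergence, because $\div\psi$ is bounded. Being a supremum of such continuous functionals, $P(\cdot)$ is $L^1$-lower semicontinuous, whence $P(E) \le \liminf_n P(E_n) = \mathscr{A}_{constr}(S)$. Alternatively, one could decompose via \eqref{perimetro} and invoke the classical lower semicontinuity of the Euclidean perimeter on each sheet $D$, $D'$, using that $E_n \to E$ in $L^1(Y_\Sigma)$ forces $E_n^j \to E^j$ in $L^1(D)$. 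Since $E \in \mathscr{P}_{constr}(Y_\Sigma)$ gives the reverse inequality $P(E) \ge \mathscr{A}_{constr}(S)$, we conclude $P(E) = \mathscr{A}_{constr}(S)$, so $E$ is the desired minimizer.

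As for the main difficulty, I expect no substantial obstacle: the genuinely delicate point—stability of the constraint under $L^1$ convergence—is precisely the content of Lemma~\ref{comp}, which we are entitled to assume, while both the compactness extraction and the lower semicontinuity are standard once cast in the abstract form above. The only care required is in correctly invoking the representation formula to ensure that the constructed competitor has finite perimeter and that the limit inequalities are applied sheetwise in a consistent manner.
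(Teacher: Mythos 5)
Your proof is correct and follows essentially the same route as the paper: the direct method, with compactness supplied by Lemma~\ref{comp} and lower semicontinuity of the perimeter obtained from its characterization in \eqref{perimeter} as a supremum of functionals continuous under $L^1(Y_\Sigma)$ convergence. The explicit finite-perimeter competitor you exhibit to guarantee finiteness of the infimum is a detail the paper leaves implicit, but it does not change the argument.
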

\begin{proof}
The proof follows by the application of the direct method thanks to Lemma $\ref{comp}$ 
and the lower semicontinuity of the perimeter in $L^1_{loc}(Y_\Sigma;\R;\mu)$, that holds because the perimeter defined in \eqref{perimeter} is supremum of continuous functional.
\end{proof}


\subsection{Some properties of the projection of the essential boundary}\label{properties}

\begin{prop}\label{dueh}
Given a set $E$ in $\mathscr{P}_{constr}(Y_\Sigma)$,  we have
\begin{equation*}
P(E) = 2\Ha^1(p(\partial^\ast E))\,.
\end{equation*}
\end{prop}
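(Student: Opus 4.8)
The plan is to identify the perimeter on $Y_\Sigma$ with the $\Ha^1$-measure of the essential boundary $\partial^\ast E$, read in the locally Euclidean metric that $Y_\Sigma$ inherits from $M$, and then to show that the covering projection $p$ restricts to a two-to-one map from $\partial^\ast E$ onto $p(\partial^\ast E)$. Since $Y_\Sigma$ is locally isometric to $\R^2$ through the charts $\psi_j$ of Definition~\ref{local}, the perimeter measure is local and the structure theorem of De Giorgi and Federer applies chart by chart; consequently $|D\chi_E| = \Ha^1\llcorner \partial^\ast E$ and $P(E) = \Ha^1(\partial^\ast E)$ (this is consistent with Lemma~\ref{represent}, which for $\Lambda = Y_\Sigma$ splits $P(E)$ into the planar perimeters $\sum_{j=1}^m P(E^j,D) + \sum_{j'=m+1}^{2m} P(E^{j'},\Sigma\setminus S)$). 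As $p$ is a covering map, hence a local isometry, the area formula for the $1$-rectifiable set $\partial^\ast E$ gives
\begin{equation*}
\Ha^1(\partial^\ast E) = \int_{p(\partial^\ast E)} \#\big(p^{-1}(x)\cap\partial^\ast E\big)\,d\Ha^1(x),
\end{equation*}
so the statement reduces to proving that the integrand equals $2$ for $\Ha^1$-almost every $x\in p(\partial^\ast E)$.

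The core of the argument is this multiplicity count, which I expect to be the main obstacle. I would use constraint $i)$ of Definition~\ref{zerounoconstr}, rephrased as $\sum_{j=1}^m \chi_{E^j}=1$ almost everywhere on $M$: the sets $E^1,\dots,E^m$ form a Lebesgue partition of the plane, and $p(\partial^\ast E)$ agrees up to $\Ha^1$-null sets with the interface $\bigcup_{j}\partial^\ast E^j$. By Federer's theorem each $E^a$ has a measure-theoretic inner normal, i.e.\ a half-plane blow-up, at $\Ha^1$-almost every point of $\partial^\ast E^a$; at such a point the complementary half-plane also has density $1/2$, so by the partition constraint it is occupied by exactly one other region $E^b$ with $b\neq a$, every remaining sheet having density $0$. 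Hence over such an $x$ precisely the two fibre points lying in sheets $a$ and $b$ belong to $\partial^\ast E$, which gives multiplicity $2$; feeding this into the area formula yields $P(E)=2\,\Ha^1(p(\partial^\ast E))$.

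The delicate points to be checked are that $p(\partial^\ast E)$ is genuinely $1$-rectifiable and coincides $\Ha^1$-almost everywhere with the partition interface, so that approximate tangents exist almost everywhere; that the exceptional set of non--two-phase points (triple junctions, and points lying over the cut where three or more sheets could meet) is $\Ha^1$-negligible, which follows from the above since such points lie in $\partial^\ast E^a\setminus\mathcal{F}E^a$ for every adjacent region; and finally that the gluing relations \eqref{ide} make the density computation across the cut $\Sigma$ behave exactly as in the plane, so that no spurious boundary is created or destroyed on $\Sigma\setminus S$. This last point is ensured by the construction of $Y_\Sigma$ and can be verified either directly in the charts or through the representation formula, the term $\sum_{j'}P(E^{j'},\Sigma\setminus S)$ accounting precisely for the portion of the interface lying over the cut.
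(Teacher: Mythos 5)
Your proposal is correct and is essentially the paper's own argument: both reduce to planar perimeters (you via the identification $P(E)=\Ha^1(\partial^\ast E)$ upstairs, the paper via Lemma~\ref{represent} downstairs), and both extract the factor $2$ from the same key fact that, by Federer's theorem together with the partition constraint $\sum_j\chi_{E^j}=1$, at $\Ha^1$-almost every point of $p(\partial^\ast E)$ exactly two sheets carry reduced boundary, with opposite normals. The paper writes your fibre-multiplicity count as a decomposition of $p(\partial^\ast E)$ into two-sheet contact sets $A_{h,k}$ and sums $P(E^h,A_{h,k})$ over ordered pairs $(h,k)$, handling the portion of the interface over the cut $\Sigma\setminus S$ separately via the sheets $(D',j')$, exactly as you indicate.
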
 
\begin{proof}
Consider  a set $E$ in $\mathscr{P}_{constr}(Y_\Sigma)$, 
then thanks to Lemma~\ref{represent} we get
\begin{equation}\label{split}
P(E) = \sum_{j=1}^m P(E^j,D) + \sum_{j'=m+1}^{2m} P(E^{j'},\Sigma \setminus S)\,.  
\end{equation}
Notice firstly that $\partial^\ast E^j$ and $\partial^\ast E^{j'}$ 
are rectifiable sets in $D$ and $D'$ respectively, 
therefore they admit a generalized unit normal that we denote by $\nu_{j}$ and $\nu_{j'}$. 
Define, for $h,k = 1,\ldots,m$, the set
\begin{equation*}
A_{h,k} = \left\{x\in \partial^\ast E^h\cap \partial^\ast E^k : \nu_h(x),\nu_k(x)\; \text{exist and}\;
 \nu_h(x)=-\nu_k(x) \right\}\,,
\end{equation*}
and, for $h',k'=  m+1,\ldots,2m$
\begin{equation*}
A_{h',k'} = \left\{x\in \partial^\ast E^{h'}\cap \partial^\ast E^{k'} : 
\nu_{h'}(x),\nu_{k'}(x)\; \text{exist and}\; \nu_{h'}(x)=-\nu_{k'}(x) \right\}\,.
\end{equation*}
Suppose that $A_{h,k} \subset D$ for every $h,k = 1,\ldots,m$.\\
As $E\in\mathscr{P}_{constr}(Y_\Sigma)$ the sets $A_{h,k}$ satisfy the following properties:
\begin{itemize}
\item the sets $\{A_{h,k} : k=1,\ldots,m\}$ are pairwise disjoint for every $h= 1,\ldots,m$\,; 
\item $\mathcal{H}^1\left(p(\partial^\ast E) \setminus \bigcup_{h<k}A_{h,k}\right) = 0$\,; 
\item $\Ha^1(A_{h,k} \cap \partial^\ast E^j) = 0$ for every $h,k\neq j$\,. 
\end{itemize}
Hence using \eqref{split} and the previous properties we have
\begin{eqnarray*}
P(E) & = & \sum_{h=1}^m P(E^h, \bigcup_{k=1}^m A_{h,k}) 
= \sum_{h,k=1}^m P(E^h,A_{h,k})\\
& = & \sum_{h<k} P(E^h,A_{h,k}) + \sum_{h>k} P(E^h,A_{h,k})\\
&=& 2\Ha^1(p(\partial^\ast E))
\end{eqnarray*}
as we wanted to prove.\\
If $A_{h,k} \cap \Sigma \neq \emptyset$ for some $h,k$, then one can repeat the previous argument decomposing $p(\partial^\ast E)$ with the sets $A_{h,k}$ in $D$ and with the sets in $A_{h',k'}$ in $\Sigma \setminus S$ and then use $(\ref{split})$.
\end{proof}
\begin{rem}\label{similar}
With a similar proof it is possible to prove that given  a $\mu$--measurable set $A\subset \mathbb{R}^2$  and $E\in\mathscr{P}_{constr}(Y_\Sigma)$ we have
\begin{equation*}
P(E,p^{-1}(A)) =2 \Ha^1(A\cap p(\partial^\ast E))\,.
\end{equation*}
\end{rem}

\begin{lemma}[Non-constancy]\label{nonconst}
Let $A\subset \mathbb{R}^2$ be a nonempty, open set such that $p^{-1}(A\setminus S)$ is connected.
Then, for every $E \in \mathscr{P}_{constr}(Y_\Sigma)$
\begin{equation*}
\Ha^1 (A \cap p(\partial^\ast E)) > 0\,.
\end{equation*}
\end{lemma}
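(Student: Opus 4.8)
The plan is to argue by contradiction, converting the desired inequality into a constancy statement for $\chi_E$ on a connected piece of the covering via the cylindrical representation of Remark~\ref{similar}, and then to clash that constancy with the pointwise constraint $i)$ of Definition~\ref{zerounoconstr}.

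First I would suppose that $\Ha^1(A\cap p(\partial^\ast E))=0$ and invoke Remark~\ref{similar} with the (Borel, hence $\mu$--measurable) set $A$. Since $p$ takes values in $M=\mathbb{R}^2\setminus S$, the set $p^{-1}(A)$ coincides with $U:=p^{-1}(A\setminus S)$, which is open because $p$ is continuous. We then obtain
\begin{equation*}
P(E,U) = 2\Ha^1(A\cap p(\partial^\ast E)) = 0\,,
\end{equation*}
so $\chi_E$ has vanishing total variation on the \emph{connected} open set $U$. Because $(Y_\Sigma,p)$ is locally isometric to $M$, the standard Euclidean fact that a $BV$ function of zero total variation on a connected open set is a.e.\ constant transfers to $Y_\Sigma$: cover $U$ by connected parametrization charts (Definition~\ref{local}), on each of which $\chi_E$ is a.e.\ constant, and propagate the common value using the connectedness of $U$. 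Thus there is $c\in\{0,1\}$ with $\chi_E=c$ for $\mu$--a.e.\ point of $U$.

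Next I would transfer this constancy to the fibers. Setting $N:=\{y\in U:\chi_E(y)\neq c\}$ we have $\mu(N)=0$; since $\mu$ restricts, through the local parametrizations, to the Lebesgue measure on each sheet, the projection $p(N)$ is $\mathscr{L}^2$--negligible in $A\setminus S$. Hence for $\mathscr{L}^2$--a.e.\ $x\in A\setminus S$ every point $y$ of the fiber $p^{-1}(x)$ satisfies $\chi_E(y)=c$, so that $\sum_{p(y)=x}\chi_E(y)=mc$. Comparing with constraint $i)$ of Definition~\ref{zerounoconstr}, which forces this sum to equal $1$ for a.e.\ $x$, gives $mc=1$; but $c\in\{0,1\}$ and $m\ge 2$, a contradiction. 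Therefore $\Ha^1(A\cap p(\partial^\ast E))>0$.

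The step I expect to be the main obstacle is passing from the $\mu$--a.e.\ constancy of $\chi_E$ on $U$ to a fiberwise statement, and it is exactly here that the hypothesis on $p^{-1}(A\setminus S)$ is indispensable. If $U$ were disconnected --- for instance the $m$ disjoint copies of $A$ that arise when $A$ encircles no point of $S$ --- then $\chi_E$ could equal $1$ on a single component and $0$ on the others, giving fiber sum $1$ and no contradiction at all. Connectedness is precisely what guarantees a \emph{single} constant $c$ shared across all sheets, making $mc=1$ impossible for $m\ge 2$.
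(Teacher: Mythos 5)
Your proof is correct and follows essentially the same route as the paper's: contradiction via Remark~\ref{similar} to get zero perimeter on $p^{-1}(A\setminus S)$, constancy of $\chi_E$ there from connectedness, and a clash with constraint $i)$ of Definition~\ref{zerounoconstr}. The only (cosmetic) difference is that the paper deduces constancy sheet by sheet through Lemma~\ref{represent} and then glues the constants, while you argue intrinsically on the covering and make the final contradiction explicit as $mc=1$ with $c\in\{0,1\}$, $m\ge 2$.
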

\begin{proof}
By contradiction there exists $E \in \mathscr{P}_{constr}(Y_\Sigma)$ 
such that $\Ha^1 (A \cap p(\partial^\ast E)) = 0$. 
Then by Remark~\ref{similar} we obtain $
P(E, p^{-1}(A\setminus S)) = 2\Ha^1(A \cap p(\partial^\ast E)) =0$.
By Lemma~\ref{represent} there holds
$P(E^j, (p^{-1}(A\setminus S))^j) \leq P(E, p^{-1}(A\setminus S))=0$,
for every $j\in\{1,\ldots, m\}$
and $P(E^{j'}, (p^{-1}(A\setminus S))^{j'}) \leq P(E, p^{-1}(A\setminus S))=0$,
for every $j'\in\{m+1,\ldots, 2m\}$.

Hence $D\chi_{E^j}=0$
in $(p^{-1}(A\setminus S))^j$, that implies that the function $\chi_{E^j}$ is constant in $(p^{-1}(A\setminus S))^j$ 
for every $j\in\{1,\ldots, m\}$
(similarly $\chi_{E^{j'}}$ is constant in $(p^{-1}(A\setminus S))^{j'}$).
Moreover writing the set $p^{-1}(A\setminus S)$ as 
\begin{equation*}
p^{-1}(A\setminus S)=\bigcup_{j=1}^m\big[p^{-1}(A\setminus S)\cap \tilde{\pi}(D,j)\big]
\bigcup_{j'=m+1}^{2m}\big[p^{-1}(A\setminus S)\cap \tilde{\pi}(D',j')\big]
\end{equation*}
and applying $p$ to both sides we obtain that
\begin{equation*}
A\setminus S=\bigcup_{j=1}^m(p^{-1}(A\setminus S))^j
\bigcup_{j'=m+1}^{2m}(p^{-1}(A\setminus S))^{j'}.
\end{equation*}
Hence $A\setminus S$
is a connected set, union of open sets in which $\chi_{E^{j}}$ and $\chi_{E^{j'}}$ are constant. 
This implies that the value of all $\chi_{E^{j}}$ and  
$\chi_{E^{j'}}$ is the same for every $j,j'$ and, as a consequence, $\chi_E$ is constant in $p^{-1}(A\setminus S)$. This contradicts the validity of the constraint $i)$ of Definition~\ref{zerounoconstr}.
\end{proof}
The previous lemma implies that the projection via $p$ 
of the essential boundary of $E\in\mathscr{P}_{constr}(Y_\Sigma)$ touches $S$.
\begin{cor}\label{esse}
Let 
$E \in \mathscr{P}_{constr}(Y_\Sigma)$. Then
\begin{equation*}
S \subset \overline{p(\partial^\ast E)} \,.
\end{equation*}
\end{cor}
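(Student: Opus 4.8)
The plan is to prove the inclusion pointwise: I would fix an arbitrary $p_i \in S$ and show that $p_i$ is a limit point of $p(\partial^\ast E)$, i.e. that every ball centered at $p_i$ meets $p(\partial^\ast E)$. This is precisely the conclusion supplied by Lemma~\ref{nonconst}, so the whole argument reduces to producing, around each point of $S$, neighborhoods to which that lemma applies. Concretely, since $S$ is finite I would pick $r>0$ small enough that $A \coloneqq B_r(p_i)$ satisfies $A \cap S = \{p_i\}$, so that $A$ is a nonempty open set and $A \setminus S = B_r(p_i)\setminus\{p_i\}$ is a punctured disc.

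The only hypothesis of Lemma~\ref{nonconst} to be checked is that $p^{-1}(A\setminus S)$ is connected, and this is the heart of the matter, where the construction of the covering enters. Since the punctured disc deformation retracts onto a small loop $\gamma$ about $p_i$ and $p$ is a covering, $p^{-1}(A\setminus S)$ has the same number of connected components as $p^{-1}(\gamma)$; thus connectedness is equivalent to the monodromy of $\gamma$ acting transitively on the $m$-point fibre. I expect verifying this transitivity to be the main obstacle, and I would handle it by lifting $\gamma$ through the charts using the gluing rules \eqref{ide2}: a branch of $\Sigma$ can only be crossed in the $D'$-copies and a branch of $\Sigma'$ only in the $D$-copies, and each passage through the lens $I_i$ shifts the $D$-sheet index by $i \pmod m$. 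For the endpoints $p_1$ and $p_m$ the loop meets only the two branches bounding a single lens, and tracking the index through $O, I, O$ produces a cyclic shift of the sheet index by one unit. For an interior vertex $p_k$ the condition (ii) of Definition~\ref{admissible} is exactly what arranges the four branches $\Sigma_{k-1},\Sigma_{k-1}',\Sigma_k,\Sigma_k'$ so that the loop passes cleanly through $O, I_{k-1}, O, I_k, O$; the two lens passages then contribute shifts that combine to $\pm(k-(k-1))=\pm 1 \pmod m$. In every case the monodromy is a single $m$-cycle, hence transitive, so $p^{-1}(A\setminus S)$ is connected. (This is the same connectedness property for preimages of loops of index one about a single point that is recalled informally in the introduction and established in~\cite{cover}.)

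With the hypothesis verified, Lemma~\ref{nonconst} yields $\Ha^1\!\left(A \cap p(\partial^\ast E)\right) > 0$ for every admissible $r$. In particular $B_r(p_i)\cap p(\partial^\ast E)\neq\emptyset$ for all sufficiently small $r>0$, so there are points of $p(\partial^\ast E)$ arbitrarily close to $p_i$, whence $p_i \in \overline{p(\partial^\ast E)}$. Since $p_i\in S$ was arbitrary, this gives $S \subset \overline{p(\partial^\ast E)}$, which is the assertion.
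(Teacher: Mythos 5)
Your proposal is correct and follows essentially the same route as the paper: both reduce the claim to Lemma~\ref{nonconst} applied to a small ball $B_r(p_i)$ punctured at $p_i$, with the key step being the connectedness of $p^{-1}(B_r(p_i)\setminus\{p_i\})$ (the paper phrases it as a proof by contradiction, but that is cosmetic). Your monodromy computation — that each small loop about a vertex induces a cyclic shift of the fibre by $\pm 1$, with the two lens passages at an interior vertex contributing shifts $\mp k$ and $\pm(k-1)$ — is in fact more explicit than the paper's informal ``crossing the cuts in the right order'' argument, and it is correct.
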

\begin{proof}
Suppose by contradiction that there exists $p_i \in S\setminus  \overline{p(\partial^\ast E)}$. 
Then there exists a ball $B$ with center in $p_i$ and such that 
$B\cap \overline{p(\partial^\ast E)} = \emptyset$. 
Notice that $p^{-1}(B\setminus \{p_i\})$ is path connected in $Y_\Sigma$. Indeed given two points $q_1$ and $q_2$ in $p^{-1}(B\setminus \{p_i\})$ it is possible to construct a connected path in $Y_\Sigma$ joining $q_1$ and $q_2$ following the identifications given by $\sim$ and crossing the cuts in the right order (see Definition~\ref{admissible} and the identifications defined in \eqref{ide2}); so we can apply Lemma $\ref{nonconst}$ to deduce that
\begin{displaymath}
\Ha^1(B\cap p(\partial^\ast E)) > 0\,,
\end{displaymath}
that is a contradiction.
\end{proof}

\begin{prop}\label{connected}
Consider $E\in  \mathscr{P}_{constr}(Y_\Sigma)$ such that $\Ha^1(\overline{p(\partial^\ast E)})<+\infty$. If at least one point of $S$
is contained in a connected component $\mathcal{C}$ of $\overline{p(\partial^\ast E)}$,
then the whole $S$ is contained in $\mathcal{C}$.

%
\end{prop}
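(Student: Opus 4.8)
The plan is to argue by contradiction and to reduce the statement to a single application of the Non--constancy Lemma~\ref{nonconst}, exactly as in the proof of Corollary~\ref{esse} but with a more elaborate test region. Set $K:=\overline{p(\partial^\ast E)}$. Two facts will be used freely: by condition $ii)$ of Definition~\ref{zerounoconstr} together with constraint $i)$, the set $E$ agrees with the first sheet over $\R^2\setminus\Omega$, so that $p(\partial^\ast E)\subseteq\overline\Omega$ and $K$ is compact; and $S\subseteq K$ by Corollary~\ref{esse}. Suppose now that the component $\mathcal C$ containing the prescribed point of $S$ fails to contain some $p_j\in S$. Since $\mathcal C$ is a connected component of the compact set $K$, and components coincide with quasi--components in compact spaces, I can split $K=K_1\cup K_2$ into two disjoint compact sets with $\mathcal C\subseteq K_1$ and $p_j\in K_2$. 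Because $S\subseteq K$, this yields a partition $S=S_1\cup S_2$, $S_i:=S\cap K_i$, with both pieces nonempty (the prescribed point lies in $S_1$ and $p_j\in S_2$); write $\delta:=\mathrm{dist}(K_1,K_2)>0$.

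The geometric heart of the argument is to produce a Jordan curve $\gamma\subset\R^2\setminus K$ having at least one point of $S$ in its interior and at least one in its exterior. I thicken $K_1$ to a polygonal open set $W$ by taking the union of the closed squares of a fine grid that meet $K_1$, the side length being small enough that $\overline W\cap K_2=\emptyset$ and the grid translated generically so that $K$ meets no grid line; then $K_1\subset W$, $\partial W\cap K=\emptyset$, and $\partial W$ is a finite union of disjoint polygonal Jordan curves. Let $W_0$ be the connected component of $W$ containing the prescribed point $p_i\in S_1$, bounded by one outer Jordan curve and finitely many inner ones, all lying in $\partial W\subset\R^2\setminus K$. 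Since $p_j\notin\overline W\supseteq\overline{W_0}$, either $p_j$ is exterior to the outer curve $\gamma_0$, which then has $p_i$ inside and $p_j$ outside, or $p_j$ lies in a hole of $W_0$ bounded by an inner curve $\gamma_H$, which then has $p_j$ inside and $p_i$ outside. In either case I obtain the required curve $\gamma$.

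To conclude, let $A$ be a thin annular neighborhood of $\gamma$ chosen within the (positive) distance from the compact curve $\gamma$ to the closed set $K$, so that $A\cap K=\emptyset$; in particular $A$ is a nonempty open set with $A\cap S=\emptyset$, hence $A\setminus S=A$. Its core loop is homotopic to $\gamma$, which, oriented positively, has index $1$ around the points of $S$ enclosed by $\gamma$ --- at least one and at most $m-1$ of them, since at least one point of $S$ is left outside --- and index $0$ around the remaining (at least one) points of $S$. By the connectivity property of the covering recorded at the beginning of Section~\ref{equivalence} (which follows from the construction in~\cite{cover} and the identifications~\eqref{ide2}, and is argued as in Corollary~\ref{esse}), such a loop has connected preimage, so $p^{-1}(A)=p^{-1}(A\setminus S)$ is connected. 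Lemma~\ref{nonconst} then gives $\Ha^1(A\cap p(\partial^\ast E))>0$, contradicting $A\cap p(\partial^\ast E)\subseteq A\cap K=\emptyset$. Hence no such $p_j$ exists and $S\subseteq\mathcal C$. I expect the main obstacle to be precisely the planar--topological extraction of $\gamma$ together with the verification of its winding numbers, and the passage from the connectivity of $p^{-1}(\gamma)$ to that of $p^{-1}(A)$; everything else is a direct transcription of the argument already used for Corollary~\ref{esse}.
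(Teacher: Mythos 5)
Your proof is correct and follows essentially the same strategy as the paper's: disconnect $\overline{p(\partial^\ast E)}$ into two compact pieces each containing points of $S$, extract from the complement a loop with winding number $1$ about some but not all points of $S$, thicken it to an open set disjoint from $\overline{p(\partial^\ast E)}$ whose preimage is connected, and contradict Lemma~\ref{nonconst}. The only (harmless) divergences are technical: you produce the separating Jordan curve by a grid-square thickening and the Jordan curve theorem where the paper uses an $\varepsilon$-neighbourhood with Lipschitz boundary and additivity of winding numbers, and you justify the initial clopen splitting via the equality of components and quasi-components in compact spaces, a point the paper passes over more quickly.
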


\begin{proof}
For any $p_j\in S$ let $\mathcal{C}_j$ be the connected component of $\overline{p(\partial^\ast E)}$ containing $p_j$. Suppose by contradiction that there exists $k_1\neq k_2$ such that $\mathcal{C}_{k_1} \neq \mathcal{C}_{k_2}$. 
As a consequence $\overline{p(\partial^\ast E)}$ is not connected. Hence there exist two non-empty disjoint sets $A,B \subset \overline{p(\partial^\ast E)}$, relatively closed in $\overline{p(\partial^\ast E)}$ with $A\cup B = \overline{p(\partial^\ast E)}$. 
Moreover it is possible to choose $A$ and $B$ satifying the properties above and such that there exists $p_A, p_B \in S$ with $p_A \in A$ and $p_B \in B$. 
Notice that, thanks to condition $ii)$ in Definition \ref{zerounoconstr}, the set $\overline{p(\partial^\ast E)}$ is bounded. Hence $A$ and $B$ are compact in $\R^2$. Let $\varepsilon > 0$ be such that
\begin{displaymath}
A_\varepsilon \cap B = \emptyset\, ,
\end{displaymath}
where we have denoted by $A_\varepsilon$ the open $\varepsilon$-neighbourhood of $A$.
Notice that $\partial A_\varepsilon$ is a Lipschitz manifold for $\varepsilon$ small enough \cite{tubular}. Hence $\partial A_\varepsilon$ has a finite number of connected components and in particular it is a finite union of simple loops (see for example~\cite[Corollary 1]{connectper}) that we denote by $\{\gamma_i\}_{i=1,\ldots,q}$. As $p_A \in A_\varepsilon$,

\begin{equation}\label{indicescurve}
1 = \mbox{Ind}(\gamma_1 \circ \gamma_2 \ldots \circ \gamma_q  ,p_A) = \sum_{i=1}^q \mbox{Ind}(\gamma_i, p_A)\,,
\end{equation}
where we have denoted by $\mbox{Ind}(\gamma,p)$ the index of the loop $\gamma$ with respect to the point $p$ and by $\gamma \circ \sigma$ the concatenation of two curves $\gamma$ and $\sigma$. From \eqref{indicescurve} we infer that there exists a loop $\gamma_Q$ such that $\mbox{Ind}(\gamma_Q, p_A) = 1$ and, as $p_B$ belongs to the unbounded connected component of $\R^2$ described by $\gamma_Q$, we have also that $\mbox{Ind}(\gamma_Q, p_B) = 0$. 

Hence the loop $\gamma_Q$ is such that $\gamma_Q \cap A = \emptyset$, $\gamma_Q \cap B = \emptyset$ and it has index one with respect to at most $m-1$
points of $S$ and index zero with respect to at least one point of $S$. 
This implies that $\gamma_Q$ is crossing the cuts $\Sigma$ and $\Sigma'$ at least once and therefore  $p^{-1}(\gamma_Q)$ is a closed loop in the covering space $Y_\Sigma$. Indeed tracking the path $p^{-1}(\gamma_Q)$ in $Y_\Sigma$ one can see that for every crossing of the cuts $\Sigma$ and $\Sigma'$ in the covering space the path is continuing to the next sheet (following the identification given in \eqref{ide}). In doing so, $p^{-1}(\gamma_Q)$ is visiting all the sheets of $Y_\Sigma$ closing back at the starting point.

Additionally there exists an open $\varepsilon$-tubular neighborhood of $\gamma_Q$ such that $(\gamma_Q)_\varepsilon \cap A = \emptyset$ and $(\gamma_Q)_\varepsilon \cap B = \emptyset$.
We infer that $p^{-1}((\gamma_Q)_\varepsilon)$ is path connected. Indeed given two points $q_1,q_2 \in p^{-1}((\gamma_Q)_\varepsilon)$ it is enough to connect them to $p^{-1}(\gamma_Q)$ with a path that do not intersect the cuts and use that $p^{-1}(\gamma_Q)$ is a closed loop in $Y_\Sigma$.
This is a contradiction with Lemma \ref{nonconst}.

%
%
%
\end{proof}

The next theorem is a regularity result for $p(\partial^\ast E)$
when $E \in \mathscr{P}_{constr}(Y_\Sigma)$ is a minimizer of
Problem~\eqref{minpro2}.
To prove this theorem our strategy is to establish \emph{locally} an
equivalence between
Problem~\eqref{minpro2} and the partition problem \cite{ab1} and then use
the known regularity results for it (see, for
instance,~\cite{morel, tamanini}).

\begin{dfnz}[Local minimizer]
Given $A$ an open subset of $Y_\Sigma$,
we say that $E\in \mathscr{P}_{constr}(Y_\Sigma)$ is a local minimizer
for Problem~\eqref{minpro2}
in $A$ if for every $F\in \mathscr{P}_{constr}(Y_\Sigma)$ such that
$E\Delta F \subset \subset A$,
there holds
\begin{equation*}
P(E,A) \leq  P(F,A)\,.
\end{equation*}
\end{dfnz}

Consider the $m$--regular simplex in $\mathbb{R}^{m+1}$ centred in the
origin and call
$\{\alpha_1\,\ldots \,\alpha_m\}$
the vertices of the simplex and call 
$BV(A, \{\alpha_1\,\ldots \,\alpha_m\})$
the space of $BV$--functions with values in 
$\{\alpha_1\,\ldots \,\alpha_m\}$.

\begin{dfnz}[Local minimizer for the partition problem]\label{minpart}
Given $A$ an open bounded subset of $\R^2$, we say that
$u\in BV(A, \{\alpha_1\,\ldots \,\alpha_m\})$ is a local minimizer for the partition problem
in $A$ if for every $w\in BV(A, \{\alpha_1\,\ldots \,\alpha_m\})$
such that $\{u\neq w\} \subset \subset A$,
there holds
$$
\vert Du \vert \left(A \right)\leq  \vert Dw \vert \left(A \right)\,.
$$
\end{dfnz}

Given a set  $E\in \mathscr{P}_{constr}(Y_\Sigma)$,
consider $x\in \overline{p(\partial^\ast E)}$ such that $x\notin \Sigma \cup
\Sigma'$ and $r>0$ small enough such that $B_r(x) \cap (\Sigma \cup \Sigma') = \emptyset$.
The associated vector valued function
$u^\alpha$ in $BV(B_r(x), \{\alpha_1\,\ldots \,\alpha_m\})$
is canonically defined as
\begin{equation*}
u^{\alpha}(x) \coloneqq \alpha_j \quad\text{if}\; x\in E^j\cap B_r(x)\,,
\end{equation*}
for $j=1\,,\ldots\,,m$.
Notice that by construction there holds
\begin{equation}\label{jp}
p(\partial^\ast E) \cap B_r(x) =J_{u^\alpha}\,,
\end{equation}
where $J_{u^\alpha}$ is the jump set of $u^\alpha$.

\begin{lemma}\label{equivalenzalocale}
Suppose that $E_{\min}\in \mathscr{P}_{constr}(Y_\Sigma)$
is a local minimizer
for Problem~\eqref{minpro2} in  $p^{-1}(B_r(x))$
with $x\in \overline{p(\partial^\ast E)}$ and $r>0$ 
such that $B_r(x) \cap (\Sigma \cup \Sigma') = \emptyset$.
Then the associated vector valued function $u_{\min}^\alpha$
in $BV(B_r(x), \{\alpha_1\,\ldots \,\alpha_m\})$ is a local minimizer
for the partition problem in $p^{-1}(B_r(x))$.
\end{lemma}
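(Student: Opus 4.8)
The plan is to exploit that the ball $B_r(x)$ is disjoint from the cuts, so the covering $p$ trivialises over it, and then to move competitors back and forth between the two problems while keeping the two energies proportional.

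First I would record the trivialisation. Since $B_r(x)\cap(\Sigma\cup\Sigma')=\emptyset$, the set $p^{-1}(B_r(x))$ is the disjoint union of the $m$ sheets $\psi_j(B_r(x))$, so a set $E$ restricted to $p^{-1}(B_r(x))$ is completely encoded by its traces $E^j\cap B_r(x)$, and constraint $i)$ of Definition~\ref{zerounoconstr} says exactly that $\{E^j\cap B_r(x)\}_{j=1}^m$ is a partition of $B_r(x)$ up to Lebesgue--null sets. This is precisely the data of the associated function $u^\alpha$. Next comes the energy identity: for every $E\in\mathscr{P}_{constr}(Y_\Sigma)$ with associated function $u^\alpha$, Remark~\ref{similar} gives $P(E,p^{-1}(B_r(x)))=2\Ha^1(B_r(x)\cap p(\partial^\ast E))$, while, since $u^\alpha$ takes values in the vertices of a regular simplex (all pairwise distances equal to some $\lambda>0$) and has no diffuse part, $\vert Du^\alpha\vert(B_r(x))=\lambda\,\Ha^1(J_{u^\alpha})$. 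Combining these with $J_{u^\alpha}=p(\partial^\ast E)\cap B_r(x)$ from~\eqref{jp} yields $\vert Du^\alpha\vert(B_r(x))=\tfrac{\lambda}{2}\,P(E,p^{-1}(B_r(x)))$. The constant $\lambda/2$ is the same for every competitor, so it is immaterial to the minimisation.

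Second, the competitor construction. Given $w\in BV(B_r(x),\{\alpha_1,\dots,\alpha_m\})$ with $\{w\neq u_{\min}^\alpha\}\subset\subset B_r(x)$, I would define $F$ by prescribing (as in Remark~\ref{presc}) $F^j:=\{w=\alpha_j\}$ on $B_r(x)$ and $F^j:=E_{\min}^j$ on the complement, for $j=1,\dots,m$; the trivialisation guarantees that this glues to a well-defined $\mu$--measurable subset of $Y_\Sigma$. I would then verify $F\in\mathscr{P}_{constr}(Y_\Sigma)$: constraint $i)$ holds because the $\{w=\alpha_j\}$ partition $B_r(x)$ and the $E_{\min}^j$ partition its complement; constraint $ii)$ is untouched because $F=E_{\min}$ outside $B_r(x)$, using that $B_r(x)\subset\Omega$ for $r$ small (available since the relevant points $x\in\overline{p(\partial^\ast E)}$ lie in the interior of $\Omega$, as $\overline{p(\partial^\ast E)}$ is compactly contained in $\Omega$). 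Finite perimeter of $F$ and $E_{\min}\Delta F\subset\subset p^{-1}(B_r(x))$ both follow from $\{w\neq u_{\min}^\alpha\}\subset\subset B_r(x)$, which forces $F$ to coincide with $E_{\min}$ near $\partial\,p^{-1}(B_r(x))$, so that gluing across the boundary creates no spurious interface.

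Finally I would conclude: since $E_{\min}$ is a local minimizer in $p^{-1}(B_r(x))$ and $E_{\min}\Delta F\subset\subset p^{-1}(B_r(x))$, we get $P(E_{\min},p^{-1}(B_r(x)))\le P(F,p^{-1}(B_r(x)))$. Applying the energy identity to $E_{\min}$ (associated function $u_{\min}^\alpha$) and to $F$ (associated function $w$) turns this inequality into $\vert Du_{\min}^\alpha\vert(B_r(x))\le\vert Dw\vert(B_r(x))$, which is exactly the local minimality of $u_{\min}^\alpha$ for the partition problem. The main obstacle I expect is the construction step: checking rigorously that the glued set $F$ really lies in $\mathscr{P}_{constr}(Y_\Sigma)$, namely that the gluing across $\partial B_r(x)$ adds no perimeter (guaranteed by the compact containment of the modification) and that constraint $ii)$ is preserved (guaranteed by confining the modification to $\Omega$). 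Everything else reduces to bookkeeping once the trivialisation over $B_r(x)$ and the energy identity are established.
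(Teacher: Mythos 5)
Your proposal is correct and follows essentially the same route as the paper: associate to each partition competitor $w$ a constrained set $F$ via Remark~\ref{presc}, invoke the local minimality of $E_{\min}$ in $p^{-1}(B_r(x))$, and translate perimeters into lengths of jump sets using Remark~\ref{similar} and~\eqref{jp}. You are somewhat more explicit than the paper about the trivialisation of the covering over $B_r(x)$, the normalisation constant coming from the simplex edge length, and the verification that the glued set $F$ lies in $\mathscr{P}_{constr}(Y_\Sigma)$, but these are refinements of the same argument rather than a different one.
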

\begin{proof}
Consider  any  $w\in BV(B_r(x),\{\alpha_1\,\ldots \,\alpha_m\})$
such that $\{u_{\min}^\alpha \neq w\} \subset \subset B_r(x)$.
We associate to $w$ a set $F$ in  $\mathscr{P}_{constr}(Y_\Sigma)$
defining its characteristic function as
\begin{equation*}
\chi_ {F^{j}}(x) \coloneqq\left\{
\begin{array}{ll}
1 &\text{if}\; w(x)=\alpha_j\,,\\
0 &\text{otherwise}\,
\end{array}
\right.
\end{equation*}
for $j=1\,,\ldots\,,m$ (see Remark~\ref{presc}).
By construction we have that $J_{w}=p(\partial^\ast F) \cap B_r(x)$.

Then
applying Remark~\ref{similar} and~\eqref{jp} we obtain
\begin{align*}
&2\vert Du^\alpha_{\min}\vert
(B_r(x))=2\mathcal{H}^1\left(J_{u^\alpha_{\min}}\right)\\
&= 2\mathcal{H}^1\left(p(\partial^\ast E_{\min})\cap B_r(x)
\right)=P(E_{\min},p^{-1}(B_r(x)))\\
&\leq P(F,p^{-1}(B_r(x)))=2\mathcal{H}^1\left(p(\partial^\ast F) \cap
B_r(x)\right)\\
& =2\mathcal{H}^1\left( J_w \right)=2\vert Dw \vert(B_r(x))\,,
\end{align*}
hence
\begin{equation*}
\vert Du^\alpha_{\min}\vert (B_r(x))\leq \vert Dw \vert(B_r(x))\,.
\end{equation*}
\end{proof}

\begin{rem}
We have exhibited a way
to pass \emph{locally} from
our minimization problem in the covering space to a problem of minimal
partition in $\mathbb{R}^2$
for~\eqref{minpro2}.
This is enough for our aim, that is obtaining the regularity of the
minimizer $E$,
but clearly it is possible, with a similar procedure, to show that given
a local minimizer for the minimal partition problem, then the associated
set $E\in  \mathscr{P}_{constr}(Y_\Sigma)$  is a local minimizer
for~\eqref{minpro2}.
We underline that in general  the equivalence between
the partition problem and Problem~\eqref{minpro2} does not hold
\emph{globally}.
\end{rem}

\begin{teo}[Regularity]\label{regularity}
Given $E_{min} \in \mathscr{P}_{constr}(Y_\Sigma)$
a minimizer of Problem~\eqref{minpro2}, then
there holds
\begin{equation}\label{reg1}
\mathcal{H}^1(\overline{p(\partial^\ast E_{\min})} \setminus
p(\partial^\ast E_{\min}))=0\,.
\end{equation}
Moreover $\overline{p(\partial^\ast E_{\min})}$ is a finite union of
segments meeting at triple junctions with angles of $120$ degrees.
\end{teo}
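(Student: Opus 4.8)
The plan is to exploit the local equivalence with the partition problem established in Lemma~\ref{equivalenzalocale}, to import the known interior regularity for minimal partitions, and then to remove the two artificial difficulties present in our setting: the cuts $\Sigma\cup\Sigma'$ and the finitely many points of $S$. First I would observe that a global minimizer $E_{\min}$ of~\eqref{minpro2} is automatically a local minimizer in $p^{-1}(B_r(x))$ for every $x\in\R^2$ and every admissible $r>0$: any competitor $F$ with $E_{\min}\Delta F\subset\subset p^{-1}(B_r(x))$ is in particular a global competitor, so $P(E_{\min})\leq P(F)$, and cancelling the common part of the perimeter outside the ball yields $P(E_{\min},p^{-1}(B_r(x)))\leq P(F,p^{-1}(B_r(x)))$.

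Next, for a point $x\in \overline{p(\partial^\ast E_{\min})}$ with $x\notin \Sigma\cup\Sigma'\cup S$, I would pick $r$ small enough that $B_r(x)\cap(\Sigma\cup\Sigma')=\emptyset$ and $B_r(x)\cap S=\emptyset$, and apply Lemma~\ref{equivalenzalocale}: the associated function $u^\alpha_{\min}$ is then a local minimizer for the partition problem in $B_r(x)$. By the regularity theory for minimal partitions in the plane (\cite{morel, tamanini}, together with the classification of planar minimal cones), up to an $\mathcal{H}^1$--negligible set the jump set $J_{u^\alpha_{\min}}$ consists of finitely many $C^1$ arcs meeting three at a time, the only admissible singularity in dimension two being the triple point. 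Since the functional is the pure length $\mathcal{H}^1(J_{u^\alpha_{\min}})$, with no volume constraint, the first variation forces these arcs to have vanishing curvature, hence to be straight segments meeting at $120$ degree angles. Combining this with~\eqref{jp} describes $p(\partial^\ast E_{\min})\cap B_r(x)$ and shows that the points of $\overline{p(\partial^\ast E_{\min})}\setminus p(\partial^\ast E_{\min})$ lying in $B_r(x)$ are precisely the locally finite triple points, hence $\mathcal{H}^1$--negligible.

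The behaviour on the cuts is, I expect, the main obstacle, because the local parametrizations---and therefore Lemma~\ref{equivalenzalocale}---are unavailable as soon as $B_r(x)$ meets $\Sigma\cup\Sigma'$. To overcome this I would invoke the independence of the problem from the choice of the cuts (Remark~\ref{independence}): for $x\in(\Sigma\cup\Sigma')\setminus S$, choose $\hat{{\bf\Sigma}}\in\mbox{\textbf{Cuts}}(S)$ with $x\notin\hat\Sigma\cup\hat\Sigma'$, transport $E_{\min}$ to $\hat{E}\in\mathscr{P}_{constr}(Y_{\hat{\Sigma}})$ with $p(\partial^\ast E_{\min})=p(\partial^\ast \hat{E})$, and run the argument of the previous paragraph for $\hat{E}$ near $x$; since the two projections coincide, the segment and triple--junction structure obtained for $\hat{E}$ transfers back to $E_{\min}$ at $x$. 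This confirms that no special behaviour occurs along the artificial cuts.

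Finally I would globalise. The set $\overline{p(\partial^\ast E_{\min})}$ is bounded, because condition $ii)$ of Definition~\ref{zerounoconstr} forces the essential boundary to project into $\overline{\Omega}$, and by the local analysis above its structure is locally finite; a standard compactness argument then gives that it is a \emph{finite} union of segments meeting at $120$ degree triple junctions. The finitely many points of $S$ are $\mathcal{H}^1$--negligible and merely serve as endpoints of these segments, so they affect neither the length count nor the singular set. Collecting the local contributions, the exceptional set $\overline{p(\partial^\ast E_{\min})}\setminus p(\partial^\ast E_{\min})$ reduces to the finite collection of triple points and endpoints, which is $\mathcal{H}^1$--negligible, yielding~\eqref{reg1} and the global description simultaneously.
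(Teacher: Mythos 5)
Your proposal is correct and follows essentially the same route as the paper: reduce to a local minimality statement away from the cuts (using Remark~\ref{independence} to move the cuts off any given point), invoke Lemma~\ref{equivalenzalocale} to pass to a local minimizer of the partition problem, and import the known regularity theory for planar minimal partitions. You spell out some steps the paper leaves implicit (that a global minimizer is a local one, and the final globalisation to a finite union of segments), but the argument is the same.
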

\begin{proof}
Consider $x\in \overline{p(\partial^\ast E)}$ and suppose without loss of
generality
(thanks to Remark~\ref{independence}) that $x\notin \Sigma \cup \Sigma'$;
then there exists $r>0$ such that $B_r(x)\cap (\Sigma \cup
\Sigma')=\emptyset$ and
$E_{\min}\in \mathscr{P}_{constr}(Y_\Sigma)$
is a local minimizer
 for~\eqref{minpro2} in  $p^{-1}(B_r(x))$.
 Then by Lemma~\ref{equivalenzalocale} the associated function
 $u_{\min}^\alpha$ is a local minimizer for the partition problem.
Thanks to the regularity results for the local partition problem
(see~\cite[Theorem $4.7$]{tamanini})
we have that
$\mathcal{H}^1(\overline{p(\partial^\ast E_{\min})} \setminus
p(\partial^\ast E_{\min}))=0$.
Moreover $\overline{p(\partial^\ast E_{\min})}$ inherits all the
regularity properties of the minimum
of the partition problem, namely
the set $\overline{p(\partial^\ast E_{\min})}$ is finite union of segments
meeting  at triple junctions with angles of $120$ degree.
\end{proof}

\subsection{Proof of the equivalence}
In this section we 
prove that the minimization problem~\eqref{minpro2}
is equivalent to the Steiner problem in $\R^2$. 

First of all we need to prove that, given a solution of the Steiner problem for $S$, 
we can find a set $E\in \mathscr{P}_{constr}(Y_\Sigma)$ such that $\overline{p(\partial^\ast E)}$
is the Steiner network. We prove this statement for
a smaller class of network, namely for the connected networks without loops. This result will be used again in Section~\ref{famiglie}.

\begin{dfnz}\label{conetw}
A connected network is a finite union of $C^0$ injective curves
that intersect each other only at their end points.
We say that a connected network $\mathscr{S}$ connects the points of $S$ if $S\subset \mathscr{S}$ and 
the end points of the curves of $\mathscr{S}$ either have order one and are points of $\mathscr{S}$ or have order greater or equal than one.
In the first case the end points are called leaves, in the latter case they are called multipoint.
We call $L\subset \mathscr{S}$ the set of all leaves of $\mathscr{S}$
\end{dfnz}

\begin{prop}\label{costruzione}
Consider $S=\{p_1, \ldots, p_m\}$ and $\mathscr{S}$
a connected network without loops that connects the $m$ points of $S$.
Then, for an appropriate relabeling of the points of $S$ 
there exists 
an admissible pair of cuts ${\hat{\bf{{\Sigma}}}} \in \mbox{\textbf{Cuts}}(S)$ and
a set $E_{\mathscr{S}} \in \mathscr{P}_{constr}(Y_{\hat{\Sigma}})$
such that $\overline{p(\partial^\ast E_{\mathscr{S}})}=\mathscr{S}$.
\end{prop}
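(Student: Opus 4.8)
The plan is to construct $E_{\mathscr{S}}$ as the distinguished ``first sheet'' of the covering over the complement of the network and then to show that the monodromy of the covering forces its jump set to project onto every edge of $\mathscr{S}$. As preliminary data I would relabel the points of $S$ and fix an admissible pair $\hat{\bf\Sigma}=(\hat\Sigma,\hat\Sigma')\in\mbox{\textbf{Cuts}}(S)$ with $\hat\Sigma,\hat\Sigma'\subset\Omega$; after possibly enlarging $\Omega$ (which changes neither the problem nor the covering construction) I also assume $\mathscr{S}\subset\Omega$, a property that will be needed to check condition $ii)$. The relabeling and the precise shape of the cuts only serve to realize $Y_{\hat\Sigma}$ concretely, since the set itself will be defined intrinsically.

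The key topological observation is that, $\mathscr{S}$ being a tree, $\pi_1(\R^2\setminus\mathscr{S})\cong\Z$ is generated by a large loop $\gamma_\infty$ encircling $\mathscr{S}$, hence with index $1$ around every $p_i$. Such a loop is homotopic in $M$ to a circle lying outside $\Omega$, where the covering is trivial because the cuts are contained in $\Omega$; therefore the monodromy of $\gamma_\infty$ is the identity, and $p$ restricts to a \emph{trivial} covering of $\R^2\setminus\mathscr{S}$. Thus $p^{-1}(\R^2\setminus\mathscr{S})$ splits into $m$ connected sheets $V_1,\dots,V_m$, which I label so that $V_j$ agrees with $\tilde\pi((D,j))$ outside $\Omega$, and I set $E_{\mathscr{S}}:=V_1$ (equivalently I prescribe $E^1,\dots,E^m$ and deduce the remaining parametrizations via Remark~\ref{presc}).

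Next I would check $E_{\mathscr{S}}\in\mathscr{P}_{constr}(Y_{\hat\Sigma})$. Over each $x\notin\mathscr{S}$ the fibre meets $V_1$ in exactly one point, which gives constraint $i)$; outside $\Omega$ one has $E_{\mathscr{S}}=V_1=\tilde\pi((D,1))$, so $\chi_{E^1}=1$ there and constraint $ii)$ holds; and since $\mathscr{S}$ is a finite union of rectifiable arcs of finite length, each $E^j$ has finite perimeter in $D$, so $E_{\mathscr{S}}$ has finite perimeter by Lemma~\ref{represent} and Remark~\ref{bvpar}. Because $E_{\mathscr{S}}$ coincides locally with a single sheet on $\R^2\setminus\mathscr{S}$, it has empty essential boundary there, whence $p(\partial^\ast E_{\mathscr{S}})\subseteq\mathscr{S}$ and so $\overline{p(\partial^\ast E_{\mathscr{S}})}\subseteq\mathscr{S}$.

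The heart of the argument is the reverse inclusion, i.e.\ that every edge of $\mathscr{S}$ projects from $\partial^\ast E_{\mathscr{S}}$. Fix an edge $e$ with interior point $x_0$: deleting the interior of $e$ disconnects the tree into two subtrees, and since every leaf of $\mathscr{S}$ lies in $S$, the subtree $T$ on one side of $e$ contains $k$ points of $S$ with $1\le k\le m-1$. Comparing the sheet chosen by $V_1$ on the two sides of $e$ reduces to the holonomy of a loop that crosses $e$ once and closes up through $\R^2\setminus\mathscr{S}$ around $T$; this loop has index $1$ around the $k$ points of $T$ and index $0$ around the other $m-k\ge1$ points, so by the connectedness property of the covering its monodromy is a nontrivial ($m$-cyclic) permutation. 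Hence $V_1$ selects different sheets on the two sides of $e$, so $\chi_{E_{\mathscr{S}}}$ jumps across $e$ and $e\subset p(\partial^\ast E_{\mathscr{S}})$; in the language of the paper, were there no jump then $\chi_{E_{\mathscr{S}}}$ would be constant on the connected preimage of a neighbourhood of such a loop, contradicting Lemma~\ref{nonconst}. Since the open edges are dense in $\mathscr{S}$, passing to closures gives $\overline{p(\partial^\ast E_{\mathscr{S}})}=\mathscr{S}$. I expect the main difficulty to lie precisely here: turning the intrinsic ``$V_1$ jumps across $e$'' into a rigorous statement about $\partial^\ast E_{\mathscr{S}}$ compatibly with the parametrizations $E^j$, and in the bookkeeping required to produce admissible cuts and the relabeling so that the subtree and index counts are clean.
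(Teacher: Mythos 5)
Your proposal is correct in substance but takes a genuinely different route from the paper's. The paper argues constructively: it first builds, via a traversal of the tree $\mathscr{S}$ starting from its leaves (this is where the relabeling of $S$ comes from), an admissible pair of cuts with $\hat{\bf{\Sigma}}\cap\mathscr{S}=S$, and then defines each sheet $E^{m+1-j}_{\mathscr{S}}$ explicitly as the planar region bounded by $\Sigma_j$ and the arc of $\mathscr{S}$ joining two consecutive points, with $E^1$ the complement; the identity $\overline{p(\partial^\ast E_{\mathscr{S}})}=\mathscr{S}$ is then read off the construction. You instead take an arbitrary admissible pair of cuts, define $E_{\mathscr{S}}$ intrinsically as a connected component of $p^{-1}(\R^2\setminus\mathscr{S})$ (using that the restricted covering is trivial, since a loop avoiding the connected set $\mathscr{S}$ has equal winding numbers around all points of $S$), and recover $\mathscr{S}$ from $\partial^\ast E_{\mathscr{S}}$ by a monodromy computation across each edge. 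Your version dispenses with the delicate construction of cuts avoiding $\mathscr{S}\setminus S$ (and in fact with any relabeling), at the price of having to prove the inclusion $\mathscr{S}\subseteq\overline{p(\partial^\ast E_{\mathscr{S}})}$, which in the paper's construction is automatic.

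The one step you must tighten is the monodromy claim, since both justifications you offer lean on it. ``Nontrivial'' is not by itself enough (a nontrivial permutation of the sheets could still fix the sheet selected by $V_1$), and the connectedness property you invoke --- that $p^{-1}(\gamma)$ is connected whenever $\gamma$ has index one around $k$ points of $S$ and zero around the rest, $1\le k\le m-1$ --- is false as stated when $\gcd(k,m)>1$. Tracking the identifications \eqref{ide2} around each $p_i$ (using condition (ii) of Definition~\ref{admissible}) shows that the monodromy of a loop with winding numbers $w_1,\dots,w_m$ is the cyclic shift $j\mapsto j\pm\sum_i w_i \pmod{m}$, with a sign independent of $i$; for your loop this is the shift by $k$, which is an $m$-cycle only when $\gcd(k,m)=1$ but is always fixed-point-free because $1\le k\le m-1$. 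Fixed-point-freeness is exactly what you need to conclude that $V_1$ selects different sheets on the two sides of $e$, so you should replace the appeal to connectedness (and likewise the fallback via Lemma~\ref{nonconst}, whose hypothesis of a connected preimage fails in the same cases) by this direct computation of the shift. With that substitution your argument closes.
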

\begin{proof}
First of all we prove that, up to a permutation of the labelling of the point of $S$, 
there exists an admissible pair of cuts $\hat{{\bf{\Sigma}}} \in \mbox{\textbf{Cuts}}(S)$ 
such that $\hat{{\bf{\Sigma}}}\cap \mathscr{S}=S$. 

We notice that in order to prove the previous claim it is sufficient to find $\hat{\Sigma} \in Cuts(S)$ 
such that $\hat{\Sigma} \cap \mathscr{S}=S$.
Then by a continuous deformation it is immediate to construct 
$\hat{{\bf{\Sigma}}} \in \mbox{\textbf{Cuts}}(S)$ with $\hat{{\bf{\Sigma}}}\cap \mathscr{S}=S$. 
We build $\hat\Sigma$ in a constructive way. 
We remind that from Definition~\ref{conetw} follows that the set $S$ can be written as
$L\cup \mathcal{M}:=\{\ell_1,\ldots,\ell_h\}\cup\{m_1,\ldots,m_k\}$ where $L$ is the set of leaves
and $\mathcal{M}$ is a subset (possibly empty) of the set of all the multipoints of $\mathscr{S}$ and 
$m=h+k$.
The first step of our construction is the following:
fix $\ell_1\in L$ and follow the network $\mathscr{S}$ with the rule that at every multipoint 
we proceed along the closest curve with respect to the clockwise rotation. 
As the network is without loops, this procedure ends in a leaf $\ell_2\neq\ell_1$.
We call $N_{\ell_1}$ the subnetwork described by the just defined procedure, 
$N_{\ell_1}$ contains $\ell_1,\ell_2$ and possibly some points of $\mathcal{M}$, let us say
$m_i$ with $i\in\{1,\ldots,j\}, j\leq k$.
Then there exists a Lipschitz curve
$\Sigma_1$  that connects $\ell_1$ to $m_1$, for $i\in\{1,\ldots,j-1\}$ there exist
Lipschitz curves $\Sigma_{i+1}$ that connects $m_i$ to $m_{i+1}$ 
and  $\Sigma_{j+1}$ a Lipschitz curve from $m_j$ to $\ell_2$. Moreover
we can choose all the Lipschitz curves in such a way that 
they do not intersect $\mathscr{S}\setminus S$ 
(for instance they can be obtained by  
continuous deformation of the subnetwork $N_{\ell_1}$).
Step $2$ to step $h$ of the procedure are nothing else than an 
iteration of the procedure of the first step, starting from $\ell_i$ with $i\in\{2,\ldots,h-1\}$ with the extra requirement that 
at the $n-th$ step one ones does not connect with Lipschitz curves the points $m_i$, already visited in the steps $1$ to $n-1$ (see Figure \ref{puntiinterni}). 
This will produce a family of Lipschitz curves $\{\Sigma_i\}_{i=1,\ldots,m-1}$
connecting the points of $S$ and not intersecting $\mathscr{S}\setminus S$. 
Then $\hat{\Sigma}=\cup_{i=1}^{m-1}\Sigma_i$ is the desired set of cuts.

Now we describe how to associate to $\mathscr{S}$ a set $E_{\mathscr{S}}$ 
in the covering space $Y_{\hat{\Sigma}}$.
For $j=1,\ldots,m-1$ the set $E_{\mathscr{S}}^{m+1-j}$ is defined as the open set
such that its boundary is composed by $\Sigma_j$ and the part of $\mathscr{S}$ 
connecting $\tilde{p}_j$ and $\tilde{p}_{j+1}$
and $E_{\mathscr{S}}^1=\mathbb{R}^2\setminus \cup_{j=1}^{m-1}E_{\mathscr{S}}^{m+1-j}$.
Thanks to Remark~\ref{presc} the set $E_{\mathscr{S}}$ is well defined.
By construction it is trivial that $E_{\mathscr{S}}$ 
satisfies the constraints of Definition~\ref{zerounoconstr} 
and that $\overline{p(\partial^\ast E_{\mathscr{S}})}=\mathscr{S}$.
\end{proof}


\begin{rem}
The choice of $E^j$ in the previous construction is not arbitrary: if one chooses differently the sets $E^j$,
one obtains a different set
with perimeter greater than the perimeter of $E_{\mathscr{S}}$. 
\end{rem}

\begin{figure}[H]
\centering
\begin{tikzpicture}[scale=1.3]
\path[font=\footnotesize]
(1,1) node[above]{$p_3=l_3$}
(1,-1) node[below]{$p_2=l_2$}
(-1,-1) node[below]{$p_1=l_1$}
(-1,1) node[above]{$p_4=l_4$};
\fill[black](1,1) circle (1.7pt);    
\fill[black](1,-1) circle (1.7pt);    
\fill[black](-1,1) circle (1.7pt);    
\fill[black](-1,-1) circle (1.7pt);    
\draw[rotate=90]
(1,-1)--(0.42,0)
(1,1)--(0.42,0)
(0.42,0)--(-0.42,0)
(-0.42,0)--(-1,-1)
(-0.42,0)--(-1,1);
\draw[rotate=90, dashed]
(-1,-1)to[out=-150,in=-60, looseness=1] (1,-1)
(-1,1)to[out=160,in=-160, looseness=1] (-1,-1)
(1,1)to[out=-20,in=20, looseness=1] (1,-1);
\draw[green!50!black, thick, dashed, rotate=90]
(1.05,-1)--(0.47,0)
(1.05,1)--(0.47,0);
\draw[red, thick, dashed, rotate=90]
(-0.47,0)--(-1.05,-1)
(-0.47,0)--(-1.05,1);
\draw[blue!20!black, thick, dashed, rotate=90]
(-0.37,-0.1)--(0.37,-0.1)
(-0.37,-0.1)--(-0.95,-1)
(0.37,-0.1)--(0.95,-1);
\draw[thick, blue!20!black, shift={(0.65,0.72)}, scale=1, rotate=-60]
(0,0)to[out= -45,in=135, looseness=1] (0.1,-0.1)
(0,0)to[out= -135,in=45, looseness=1] (-0.1,-0.1);
\draw[thick, green!50!black, shift={(0.55,0.79)}, scale=1, rotate=-240]
(0,0)to[out= -45,in=135, looseness=1] (0.1,-0.1)
(0,0)to[out= -135,in=45, looseness=1] (-0.1,-0.1);
\draw[thick, blue!20!black, shift={(0.65,-0.72)}, scale=1, rotate=60]
(0,0)to[out= -45,in=135, looseness=1] (0.1,-0.1)
(0,0)to[out= -135,in=45, looseness=1] (-0.1,-0.1);
\draw[thick, red, shift={(-0.55,-0.79)}, scale=1, rotate=-60]
(0,0)to[out= -45,in=135, looseness=1] (0.1,-0.1)
(0,0)to[out= -135,in=45, looseness=1] (-0.1,-0.1);
\draw[thick, red, shift={(0.55,-0.79)}, scale=1, rotate=-120]
(0,0)to[out= -45,in=135, looseness=1] (0.1,-0.1)
(0,0)to[out= -135,in=45, looseness=1] (-0.1,-0.1);
\draw[thick, green!50!black, shift={(-0.55,0.79)}, scale=1, rotate=60]
(0,0)to[out= -45,in=135, looseness=1] (0.1,-0.1)
(0,0)to[out= -135,in=45, looseness=1] (-0.1,-0.1);
\draw[thick, blue!20!black, shift={(0.1,0)}, scale=1, rotate=0]
(0,0)to[out= -45,in=135, looseness=1] (0.1,-0.1)
(0,0)to[out= -135,in=45, looseness=1] (-0.1,-0.1);
\end{tikzpicture}\qquad
\begin{tikzpicture}[scale=1.3]
\path[font=\footnotesize]
(0,0.42)node[left]{$p_3=m_1$}
(1,1) node[above]{$p_4=l_3$}
(1,-1) node[below]{$p_2=l_2$}
(-1,-1) node[below]{$p_1=l_1$}
(-1,1) node[above]{$p_5=l_4$};
\fill[black](0,0.42)circle (1.7pt);   
\fill[black](1,1) circle (1.7pt);    
\fill[black](1,-1) circle (1.7pt);    
\fill[black](-1,1) circle (1.7pt);    
\fill[black](-1,-1) circle (1.7pt);    
\draw[rotate=90]
(1,-1)--(0.42,0)
(1,1)--(0.42,0)
(0.42,0)--(-0.42,0)
(-0.42,0)--(-1,-1)
(-0.42,0)--(-1,1);
\draw[rotate=90, dashed]
(-1,-1)to[out=-150,in=-120, looseness=1] (0.42,0)
(0.42,0)to[out=-120,in=-100, looseness=1] (1,-1)
(-1,1)to[out=160,in=-160, looseness=1] (-1,-1)
(1,1)to[out=-20,in=20, looseness=1] (1,-1);
\draw[green!50!black, thick, dashed, rotate=90]
(1.05,-1)--(0.47,0)
(1.05,1)--(0.47,0);
\draw[red, thick, dashed, rotate=90]
(-0.47,0)--(-1.05,-1)
(-0.47,0)--(-1.05,1);
\draw[blue!20!black, thick, dashed, rotate=90]
(-0.37,-0.1)--(0.37,-0.1)
(-0.37,-0.1)--(-0.95,-1)
(0.37,-0.1)--(0.95,-1);
\draw[thick, blue!20!black, shift={(0.65,0.72)}, scale=1, rotate=-60]
(0,0)to[out= -45,in=135, looseness=1] (0.1,-0.1)
(0,0)to[out= -135,in=45, looseness=1] (-0.1,-0.1);
\draw[thick, green!50!black, shift={(0.55,0.79)}, scale=1, rotate=-240]
(0,0)to[out= -45,in=135, looseness=1] (0.1,-0.1)
(0,0)to[out= -135,in=45, looseness=1] (-0.1,-0.1);
\draw[thick, blue!20!black, shift={(0.65,-0.72)}, scale=1, rotate=60]
(0,0)to[out= -45,in=135, looseness=1] (0.1,-0.1)
(0,0)to[out= -135,in=45, looseness=1] (-0.1,-0.1);
\draw[thick, red, shift={(-0.55,-0.79)}, scale=1, rotate=-60]
(0,0)to[out= -45,in=135, looseness=1] (0.1,-0.1)
(0,0)to[out= -135,in=45, looseness=1] (-0.1,-0.1);
\draw[thick, red, shift={(0.55,-0.79)}, scale=1, rotate=-120]
(0,0)to[out= -45,in=135, looseness=1] (0.1,-0.1)
(0,0)to[out= -135,in=45, looseness=1] (-0.1,-0.1);
\draw[thick, green!50!black, shift={(-0.55,0.79)}, scale=1, rotate=60]
(0,0)to[out= -45,in=135, looseness=1] (0.1,-0.1)
(0,0)to[out= -135,in=45, looseness=1] (-0.1,-0.1);
\draw[thick, blue!20!black, shift={(0.1,0)}, scale=1, rotate=0]
(0,0)to[out= -45,in=135, looseness=1] (0.1,-0.1)
(0,0)to[out= -135,in=45, looseness=1] (-0.1,-0.1);
\end{tikzpicture}\qquad
\begin{tikzpicture}[scale=1.3]
\path[font=\footnotesize]
(0,-0.42)node[left]{$p_3=m_1$}
(0,0.42)node[left]{$p_4=m_2$}
(1,1) node[above]{$p_5=l_3$}
(1,-1) node[below]{$p_2=l_2$}
(-1,-1) node[below]{$p_1=l_1$}
(-1,1) node[above]{$p_6=l_4$};
\fill[black](0,0.42)circle (1.7pt);
\fill[black](0,-0.42)circle (1.7pt);      
\fill[black](1,1) circle (1.7pt);    
\fill[black](1,-1) circle (1.7pt);    
\fill[black](-1,1) circle (1.7pt);    
\fill[black](-1,-1) circle (1.7pt);    
\draw[rotate=90]
(1,-1)--(0.42,0)
(1,1)--(0.42,0)
(0.42,0)--(-0.42,0)
(-0.42,0)--(-1,-1)
(-0.42,0)--(-1,1);
\draw[rotate=90, dashed]
(-1,1)to[out=160,in=-160, looseness=1] (-0.42,0)
(-0.42,0)to[out=160,in=-160, looseness=1] (-1,-1)
(-1,-1)to[out=-150,in=-120, looseness=1] (0.42,0)
(0.42,0)to[out=-120,in=-100, looseness=1] (1,-1)
(1,1)to[out=-20,in=20, looseness=1] (1,-1);
\draw[green!50!black, thick, dashed, rotate=90]
(1.05,-1)--(0.47,0)
(1.05,1)--(0.47,0);
\draw[red, thick, dashed, rotate=90]
(-0.47,0)--(-1.05,-1)
(-0.47,0)--(-1.05,1);
\draw[blue!20!black, thick, dashed, rotate=90]
(-0.37,-0.1)--(0.37,-0.1)
(-0.37,-0.1)--(-0.95,-1)
(0.37,-0.1)--(0.95,-1);
\draw[thick, blue!20!black, shift={(0.65,0.72)}, scale=1, rotate=-60]
(0,0)to[out= -45,in=135, looseness=1] (0.1,-0.1)
(0,0)to[out= -135,in=45, looseness=1] (-0.1,-0.1);
\draw[thick, green!50!black, shift={(0.55,0.79)}, scale=1, rotate=-240]
(0,0)to[out= -45,in=135, looseness=1] (0.1,-0.1)
(0,0)to[out= -135,in=45, looseness=1] (-0.1,-0.1);
\draw[thick, blue!20!black, shift={(0.65,-0.72)}, scale=1, rotate=60]
(0,0)to[out= -45,in=135, looseness=1] (0.1,-0.1)
(0,0)to[out= -135,in=45, looseness=1] (-0.1,-0.1);
\draw[thick, red, shift={(-0.55,-0.79)}, scale=1, rotate=-60]
(0,0)to[out= -45,in=135, looseness=1] (0.1,-0.1)
(0,0)to[out= -135,in=45, looseness=1] (-0.1,-0.1);
\draw[thick, red, shift={(0.55,-0.79)}, scale=1, rotate=-120]
(0,0)to[out= -45,in=135, looseness=1] (0.1,-0.1)
(0,0)to[out= -135,in=45, looseness=1] (-0.1,-0.1);
\draw[thick, green!50!black, shift={(-0.55,0.79)}, scale=1, rotate=60]
(0,0)to[out= -45,in=135, looseness=1] (0.1,-0.1)
(0,0)to[out= -135,in=45, looseness=1] (-0.1,-0.1);
\draw[thick, blue!20!black, shift={(0.1,0)}, scale=1, rotate=0]
(0,0)to[out= -45,in=135, looseness=1] (0.1,-0.1)
(0,0)to[out= -135,in=45, looseness=1] (-0.1,-0.1);
\end{tikzpicture}
\caption{Examples of three possible situations occouring in Proposition \ref{costruzione}}\label{puntiinterni}
\end{figure}
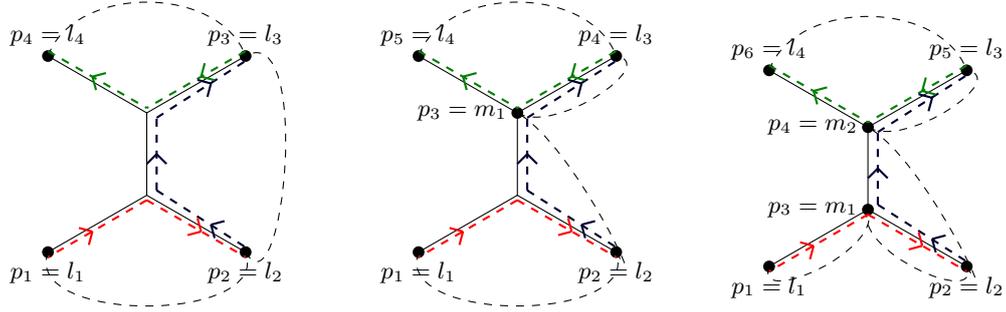

\begin{teo}\label{samelength}
The Steiner problem 
and Problem~\eqref{minpro2} are equivalent.
In particular if $E\in\mathscr{P}_{constr}(Y_\Sigma)$ is a minimizer of Problem~\eqref{minpro2},
then $\overline{p(\partial^\ast E})$ is a solution of the Steiner
problem and
if $\mathscr{S}$ is a minimizer for the Steiner problem,
then its associated set $E_\mathscr{S}$ (constructed as in Proposition~\ref{costruzione})
is a minimizer for Problem~\eqref{minpro2}.
\end{teo}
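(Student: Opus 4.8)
The plan is to establish the chain of (in)equalities between the two infima and then read off the correspondence of minimizers from the fact that these inequalities are forced to be saturated. Throughout, let me write $\ell_S$ for the infimum in the Steiner problem~\eqref{ste}; the target is to show $\mathscr{A}_{constr}(S)=2\ell_S$ together with the matching of minimizers.

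First I would prove the lower bound $\mathscr{A}_{constr}(S)\ge 2\ell_S$. Let $E\in\mathscr{P}_{constr}(Y_\Sigma)$ be a minimizer, which exists by the existence theorem. By Theorem~\ref{regularity} the set $\overline{p(\partial^\ast E)}$ is a finite union of segments and $\Ha^1(\overline{p(\partial^\ast E)}\setminus p(\partial^\ast E))=0$, so $\Ha^1(\overline{p(\partial^\ast E)})=\Ha^1(p(\partial^\ast E))$. Corollary~\ref{esse} gives $S\subset\overline{p(\partial^\ast E)}$, and Proposition~\ref{connected} forces all of $S$ to lie in a single connected component $\mathcal{C}$ of $\overline{p(\partial^\ast E)}$, so that $\mathcal{C}$ is an admissible Steiner competitor. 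Using Proposition~\ref{dueh} I then obtain
\[
2\ell_S\le 2\Ha^1(\mathcal{C})\le 2\Ha^1(\overline{p(\partial^\ast E)})=2\Ha^1(p(\partial^\ast E))=P(E)=\mathscr{A}_{constr}(S).
\]

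For the upper bound $\mathscr{A}_{constr}(S)\le 2\ell_S$ I would invoke the classical fact that for finitely many points in $\R^2$ the Steiner problem admits a minimizer $\mathscr{S}$ which is a connected network without loops (any loop can be broken while preserving connectedness and strictly decreasing length, so minimizers are trees). Proposition~\ref{costruzione} then yields, for a suitable pair of cuts $\hat{\bf\Sigma}$, a set $E_{\mathscr{S}}\in\mathscr{P}_{constr}(Y_{\hat\Sigma})$ with $\overline{p(\partial^\ast E_{\mathscr{S}})}=\mathscr{S}$. Since $\mathscr{S}$ is a finite union of curves one has $\Ha^1(p(\partial^\ast E_{\mathscr{S}}))=\Ha^1(\mathscr{S})=\ell_S$, and by the independence of $\mathscr{A}_{constr}(S)$ from the choice of cuts (Remark~\ref{independence}) together with Proposition~\ref{dueh},
\[
\mathscr{A}_{constr}(S)\le P(E_{\mathscr{S}})=2\Ha^1(p(\partial^\ast E_{\mathscr{S}}))=2\Ha^1(\mathscr{S})=2\ell_S.
\]
The two bounds give $\mathscr{A}_{constr}(S)=2\ell_S$.

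The correspondence of minimizers then follows by forcing the inequalities to be equalities. If $E$ minimizes~\eqref{minpro2}, the lower-bound chain collapses: $\Ha^1(\mathcal{C})=\ell_S$ makes $\mathcal{C}$ a Steiner minimizer, while $\Ha^1(\overline{p(\partial^\ast E)})=\Ha^1(\mathcal{C})$ combined with the fact (Theorem~\ref{regularity}) that $\overline{p(\partial^\ast E)}$ is a finite union of segments rules out any superfluous component, so $\overline{p(\partial^\ast E)}=\mathcal{C}$ is itself a Steiner solution. Conversely, if $\mathscr{S}$ is a Steiner minimizer, the computation above gives $P(E_{\mathscr{S}})=2\ell_S=\mathscr{A}_{constr}(S)$, so $E_{\mathscr{S}}$ minimizes~\eqref{minpro2}. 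I expect the main obstacle to be the bookkeeping that excludes extra connected components of $\overline{p(\partial^\ast E)}$ and the justification that Steiner minimizers are already loopless trees (so Proposition~\ref{costruzione} applies directly); both hinge on combining Proposition~\ref{connected}, Theorem~\ref{regularity}, and Remark~\ref{independence} carefully.
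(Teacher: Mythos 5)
Your proposal is correct and follows essentially the same route as the paper's proof: both establish the chain of inequalities via Proposition~\ref{costruzione}, Remark~\ref{independence}, Proposition~\ref{dueh}, Theorem~\ref{regularity}, Corollary~\ref{esse} and Proposition~\ref{connected}, and then read off the correspondence of minimizers from the saturation of the inequalities. The only cosmetic difference is that you split the argument into an explicit lower and upper bound for $\mathscr{A}_{constr}(S)=2\ell_S$ (and spell out that Steiner minimizers are loopless trees before applying Proposition~\ref{costruzione}), whereas the paper writes one combined chain of (in)equalities.
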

\begin{proof}
Consider $E$ a minimizer for Problem~\eqref{minpro2} in $Y_\Sigma$
and $\mathscr{S}$ a minimizer for the Steiner problem. 
Thanks to Proposition~\ref{costruzione} there exists
an admissible pair of cuts ${\hat{\bf{{\Sigma}}}} \in \mbox{\textbf{Cuts}}(S)$ and a set $E_{\mathscr{S}} \in \mathscr{P}_{constr}(Y_{\hat{\Sigma}})$
such that $\overline{p(\partial^\ast E_{\mathscr{S}})}=\mathscr{S}$.
Thanks to Remark~\ref{independence} Problem~\eqref{minpro2}
is independent on the choice of the cuts. Therefore
\begin{equation}\label{f}
P(E, Y_\Sigma) \leq P(E_\mathscr{S}, Y_{\hat{\Sigma}})\,.
\end{equation}
Using Proposition~\ref{dueh} 
and inequality~\eqref{f} we have
\begin{equation}\label{d}
\mathcal{H}^1(p(\partial^\ast E))=\frac 12 P(E, Y_{\Sigma}) \leq \frac 12 P(E_\mathscr{S}, Y_{\hat{\Sigma}})=
\mathcal{H}^1(p(\partial^\ast E_\mathscr{S})) \leq \mathcal{H}^1(\mathscr{S})\,.
\end{equation}
Thanks to Theorem~\ref{regularity} we know that
\begin{equation}\label{g}
\mathcal{H}^1(\overline{p(\partial^\ast E)})=\mathcal{H}^1(p(\partial^\ast E))\,.
\end{equation}
Thanks to Corollary~\ref{esse} we have $S\subset \overline{p(\partial^\ast E)}$, 
and by Proposition~\ref{connected}
there exists a connected component  $\mathcal{C}_{E}$ of $\overline{p(\partial^\ast E)}$ 
that contains $S$. 
Hence $\mathcal{C}_{E}$ is a competitor for the Steiner problem for $S$ in $\mathbb{R}^2$, 
therefore from the minimality of $\mathscr{S}$ we get
\begin{equation}\label{e}
\mathcal{H}^1(\mathscr{S})\leq\mathcal{H}^1(\mathcal{C}_{E})
\leq\mathcal{H}^1(\overline{p(\partial^\ast E)})\,.
\end{equation}
Combining~\eqref{d},~\eqref{g} and~\eqref{e} we have 
\begin{equation*}
\mathcal{H}^1(\overline{p(\partial^\ast E)})=\mathcal{H}^1(\mathscr{S})\,.
\end{equation*}
We obtain as well that 
$\mathcal{H}^1(\mathcal{C}_{E})=\mathcal{H}^1(\overline{p(\partial^\ast E)})$; 
therefore using again Theorem~\ref{regularity} we infer that $\overline{p(\partial^\ast E})$ is connected.
The set $\overline{p(\partial^\ast E)}$ is a connected set that joints the point of $S$
and (by the minimality of $\mathscr{S}$) such that, for every connected set $\mathscr{T}\subset \R^2$
that connects the point of $S$
\begin{equation*}
\mathcal{H}^1(\overline{p(\partial^\ast E)})= \mathcal{H}^1(\mathscr{S})\leq  \mathcal{H}^1(\mathscr{T}) \,.
\end{equation*}
Hence $\overline{p(\partial^\ast E)}$ is a minimizer for the Steiner problem.
On the other hand the constrained set $E_\mathscr{S}$  has the same perimeter of $E$,
hence is a solution of Problem~\eqref{minpro2}.
\end{proof}

\section{Calibrations}\label{seccal}
In this section we introduce the notion of calibration for the minimum problem \eqref{minpro2} and we show some explicit examples. In doing so, it is often convenient to consider vector fields that are not continuous and for which a divergence theorem still holds. For this reason we employ the notion of \emph{approximately regular vector field} (in a slightly stronger version than in~\cite{mumford}) and then we generalize it to the covering space setting.

\begin{dfnz}[Approximately regular vector fields on $\R^n$]
Given $A\subset \R^{n}$, a Borel vector field $\Phi: A \rightarrow \R^{n}$ is approximately regular
if it is bounded and for every Lipschitz hypersurface $\mathscr{M}$ in $\R^{n}$, $\Phi$ admits traces on $\mathscr{M}$ on the two sides of $\mathscr{M}$ (denoted by $\Phi^+$ and $\Phi^-$) and 
\begin{equation}\label{app}
\Phi^+(x) \cdot \nu_{\mathscr{M}}(x) = \Phi^-(x) \cdot \nu_{\mathscr{M}}(x) = \Phi(x) \cdot \nu_\mathscr{M}(x),
\end{equation}
for $\mathcal{H}^{n-1}$--a.e. $x \in \mathscr{M}\cap A$.
\end{dfnz}  

\begin{dfnz}[Approximately regular vector fields on $Y_\Sigma$]\label{appreg}
Given $\Phi: Y_\Sigma\rightarrow \R^2$, we say that it is \emph{approximately regular} in $Y_\Sigma$ if
$\Phi^j$
and
$\Phi^{j'}$
(see Definition~\ref{pullback})
 are \emph{approximately regular} for every $j=1,\ldots,m$ and $j'=m+1,\ldots,2m$.  
\end{dfnz}

\begin{dfnz}[Calibration on coverings]\label{caliconvering}
Given $E\in\mathscr{P}_{constr}(Y_\Sigma)$,
a calibration for $E$
 (with respect to the minimum problem \eqref{minpro2})
is an \emph{approximately regular} vector field $\Phi :Y_{\Sigma}\to\mathbb{R}^2$ such that:
\begin{enumerate}
\item [(\textbf{1})] $\div\Phi=0$ (in the sense of the distributions);
\item [(\textbf{2})] $\vert \Phi^i (x) - \Phi^j (x)\vert \leq 2$ 
for every $i,j = 1,\ldots m$ and for every $x\in D$;
\item  [(\textbf{3})] $\int_{Y_\Sigma} \Phi \cdot D\chi_E=P(E)$.
\end{enumerate}
\end{dfnz}

\begin{rem}\label{twoinsteadofone}
At first sight the size condition (\textbf{2}) may sounds different in comparison with the classical notion of paired calibration. This difference is only apparent. Indeed we choose to minimize $P(E)$ that thanks to Proposition \ref{dueh} is equal to the double of the length of the minimal network on to the base space.
\end{rem}

\begin{prop}[Divergence theorem on coverings]\label{divteo}
Consider $E,F\in \mathscr{P}_{constr}(Y_\Sigma)$ and
let $\Phi : Y_\Sigma \to \R^2$ be an approximately regular vector field such that $\div \Phi = 0$ 
(in the sense of the distributions)
in $Y_\Sigma$.
Then
\begin{equation}\label{booh}
\int_{Y_\Sigma} \Phi \cdot D\chi_E = \int_{Y_\Sigma} \Phi \cdot D\chi_F.
\end{equation}  
\end{prop}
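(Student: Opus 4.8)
The plan is to reduce the claimed equality to a Gauss--Green theorem on the covering manifold, and to use the two constraints in Definition~\ref{zerounoconstr} to confine everything to a bounded region. First I would rewrite
\begin{equation*}
\int_{Y_\Sigma}\Phi\cdot D\chi_E-\int_{Y_\Sigma}\Phi\cdot D\chi_F=\int_{Y_\Sigma}\Phi\cdot D(\chi_E-\chi_F),
\end{equation*}
and observe that $\chi_E-\chi_F=\chi_{E\setminus F}-\chi_{F\setminus E}$. Since $E$ and $F$ have finite perimeter in $Y_\Sigma$, so do $E\setminus F$ and $F\setminus E$. Moreover, over every base point $x\in\R^2\setminus\Omega$ condition $ii)$ forces the sheet--one lift into the set and condition $i)$ forces it to be the only lift, so both $E$ and $F$ agree over $\R^2\setminus\Omega$; hence $E\setminus F$ and $F\setminus E$ are contained in the precompact region $p^{-1}(\overline\Omega)$. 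It therefore suffices to prove that $\int_{Y_\Sigma}\Phi\cdot D\chi_U=0$ for every \emph{bounded} set of finite perimeter $U\subset Y_\Sigma$, and then apply this to $U=E\setminus F$ and $U=F\setminus E$ and subtract.

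The core step is thus a divergence theorem: the generalized flux of an approximately regular, distributionally divergence--free field across the reduced boundary of a bounded finite perimeter set vanishes. Here approximate regularity plays a double role. On one hand it makes the pairing $\int_{Y_\Sigma}\Phi\cdot D\chi_U=\int_{\partial^\ast U}\Phi\cdot\nu_U\,d\mathcal{H}^1$ unambiguous, since by~\eqref{app} the normal component $\Phi\cdot\nu_U$ is single valued $\mathcal{H}^1$--a.e.\ on $\partial^\ast U$ regardless of the chosen trace. On the other hand it is precisely the regularity needed to run a Gauss--Green argument for a discontinuous field. Concretely, I would decompose $U$ sheet by sheet through Lemma~\ref{represent}, writing the flux as a sum of Euclidean fluxes $\int_{\partial^\ast U^j}\Phi^j\cdot\nu_{U^j}\,d\mathcal{H}^1$ over the copies of $D$ (and of $D'$), and apply the Euclidean divergence theorem for approximately regular vector fields (in the spirit of~\cite{mumford}) on each sheet. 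As $\div\Phi^j=0$ in the interior of each sheet, every such term reduces to contributions supported on the cuts $\Sigma\cup\Sigma'$.

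It then remains to check that these cut contributions cancel, and this is where the \emph{global} condition $\div\Phi=0$ on $Y_\Sigma$ enters: testing it against functions supported in a neighbourhood of a cut shows that the normal traces of $\Phi$ coming from the two sheets glued along that cut coincide, so the fluxes through the cut from paired sheets are equal with opposite outer normals and sum to zero. Combining this matching with the sheetwise computation yields $\int_{Y_\Sigma}\Phi\cdot D\chi_U=0$ for every bounded $U$, and the proposition follows. I expect the main obstacle to be exactly this bookkeeping of the cut terms: one must verify that the identifications~\eqref{ide} are compatible with the normal traces, so that the contributions along $\Sigma$ and $\Sigma'$ cancel between paired sheets rather than accumulate. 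Mollifying $\chi_U$ and passing to the limit is an alternative route, but it again hinges on the same trace--matching property in order to push the discontinuous field $\Phi$ through the weak--$\ast$ limit of $D\chi_{U_\varepsilon}$.
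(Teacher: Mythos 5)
Your proposal is correct and its core is the same as the paper's: decompose the flux sheet by sheet (Lemma~\ref{represent}), apply the Euclidean divergence theorem for approximately regular fields on each copy of $D$, and cancel the resulting jump contributions along the cuts using the identifications~\eqref{ide} together with the single-valuedness~\eqref{app} of the normal trace of $\Phi^{j'}$ across $\Sigma$ --- exactly the content of the paper's~\eqref{approx},~\eqref{t1} and~\eqref{t2}. The only structural difference is how the behaviour at infinity is handled: the paper never subtracts, but shows that each of the two fluxes equals the common boundary term $-\sum_{j}\int_{\partial\Omega_\varepsilon}\Phi^j\chi_{E^j}\cdot\nu_{\partial\Omega_\varepsilon}\,d\Ha^1$, which agrees for $E$ and $F$ because conditions $i)$ and $ii)$ of Definition~\ref{zerounoconstr} determine both sets outside $\Omega$; you subtract first, use the same two conditions to confine $E\setminus F$ and $F\setminus E$ to $p^{-1}(\overline{\Omega})$, and reduce to the vanishing of the flux of a bounded finite perimeter set, so the term at infinity disappears instead of merely matching. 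This reduction is legitimate, since Lemma~\ref{represent} and the sheetwise Gauss--Green argument nowhere use membership in $\mathscr{P}_{constr}(Y_\Sigma)$, only finite perimeter. One small remark: the trace matching across a cut is most directly read off from the approximate regularity of $\Phi^{j'}$ on $D'$ (condition~\eqref{app} applied to the Lipschitz curve $\Sigma$, transported to the pair of glued $D$-sheets by~\eqref{ide}) rather than by re-testing $\div\Phi=0$ near the cut, though both arguments are valid under the stated hypotheses.
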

\begin{proof}
See Appendix \ref{appe}.
\end{proof}
In the following theorem we prove that our notion of calibration is indeed meaningful,
 in the sense that the existence of a calibration for a given $E\in \mathscr{P}_{constr}(Y_\Sigma)$ 
 implies the minimality of $E$ for Problem~\eqref{minpro2}.

\begin{teo}\label{impli}
If $\Phi :Y_{\Sigma}\to\mathbb{R}^{2}$ is a calibration for $E$,
then $E$ is a minimizer of Problem~\eqref{minpro2}.
\end{teo}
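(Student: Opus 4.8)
The plan is to show that for any competitor $F \in \mathscr{P}_{constr}(Y_\Sigma)$, the existence of a calibration $\Phi$ for $E$ forces $P(E) \leq P(F)$. The natural strategy is a chain of (in)equalities that starts from $P(E)$, uses property (\textbf{3}) of the calibration to rewrite it as $\int_{Y_\Sigma} \Phi \cdot D\chi_E$, then uses the divergence theorem on coverings (Proposition~\ref{divteo}) to swap $E$ for $F$ since both lie in $\mathscr{P}_{constr}(Y_\Sigma)$ and $\Phi$ is divergence-free and approximately regular, and finally bounds $\int_{Y_\Sigma} \Phi \cdot D\chi_F$ from above by $P(F)$ using the size condition (\textbf{2}) together with the representation formula for the perimeter.

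**The key steps, in order, are as follows.** First I would write, using property (\textbf{3}),
\begin{equation*}
P(E) = \int_{Y_\Sigma} \Phi \cdot D\chi_E\,.
\end{equation*}
Second, since $E, F \in \mathscr{P}_{constr}(Y_\Sigma)$ and $\Phi$ is an approximately regular, divergence-free vector field, Proposition~\ref{divteo} gives
\begin{equation*}
\int_{Y_\Sigma} \Phi \cdot D\chi_E = \int_{Y_\Sigma} \Phi \cdot D\chi_F\,.
\end{equation*}
The crux is the third step: I must show $\int_{Y_\Sigma} \Phi \cdot D\chi_F \leq P(F)$. To do this I would invoke the representation formula~\eqref{derivata} to decompose $\int_{Y_\Sigma} \Phi \cdot D\chi_F$ sheet by sheet, relating $D\chi_F$ to the generalized normals $\nu_j$ of the projected boundaries $\partial^\ast F^j$. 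On the projection $p(\partial^\ast F)$, almost every point lies in some $A_{h,k}$ (in the notation of the proof of Proposition~\ref{dueh}), where exactly two sheets $h,k$ have opposite normals $\nu_h = -\nu_k$ because of the constraint $i)$ of Definition~\ref{zerounoconstr}. At such a point the integrand contributes $(\Phi^h - \Phi^k)\cdot \nu_h$, and the size condition (\textbf{2}), namely $|\Phi^i - \Phi^j| \leq 2$, bounds this by $2$. Integrating, this yields
\begin{equation*}
\int_{Y_\Sigma} \Phi \cdot D\chi_F \leq 2\,\Ha^1(p(\partial^\ast F)) = P(F)\,,
\end{equation*}
where the last equality is Proposition~\ref{dueh}. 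Chaining the three steps gives $P(E) \leq P(F)$ for arbitrary $F$, so $E$ is a minimizer.

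**The main obstacle** I expect is the bookkeeping in the third step: making rigorous the claim that the integral $\int_{Y_\Sigma}\Phi\cdot D\chi_F$ decomposes into contributions of the form $(\Phi^h-\Phi^k)\cdot\nu_h$ over the pieces $A_{h,k}$, and that the constraint really forces exactly two sheets to be active with antipodal normals at $\Ha^1$-almost every point of $p(\partial^\ast F)$. This is precisely the structural fact established inside the proof of Proposition~\ref{dueh} (the sets $A_{h,k}$ are pairwise disjoint in $k$, cover $p(\partial^\ast F)$ up to an $\Ha^1$-null set, and each point of the projected boundary comes from two antipodally-oriented sheets). Once that decomposition is in hand, the estimate is immediate from (\textbf{2}); the subtlety is purely in invoking the measure-theoretic structure correctly, whereas the algebraic inequality $(\Phi^h-\Phi^k)\cdot\nu_h \leq |\Phi^h-\Phi^k| \leq 2$ (Cauchy--Schwarz with $|\nu_h|=1$) is elementary.
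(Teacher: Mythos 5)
Your proposal is correct and follows essentially the same route as the paper's proof: property (\textbf{3}) to write $P(E)$ as the pairing with $\Phi$, Proposition~\ref{divteo} to replace $E$ by $F$, and then the sheet-by-sheet decomposition from~\eqref{derivata} combined with the two-active-sheets/antipodal-normals structure and the size condition (\textbf{2}) to bound the pairing by $2\Ha^1(p(\partial^\ast F)) = P(F)$. The only cosmetic difference is that you phrase the last step through the sets $A_{h,k}$ from the proof of Proposition~\ref{dueh}, while the paper writes the integrand as $\sum_j \Phi^j\cdot\nu_{F^j}\chi_{\partial^\ast F^j}$ and invokes the same structural fact directly.
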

\begin{proof}
Let $\Phi :Y_{\Sigma}\to\mathbb{R}^{2}$ be a  calibration for $E\in \mathscr{P}_{constr}(Y_\Sigma)$ 
and let $F\in \mathscr{P}_{constr}(Y_\Sigma)$ a competitor. 
By Proposition~\ref{divteo} and (\textbf{1})  of Definition~\ref{caliconvering} we have
\begin{equation}\label{a}
\int_{Y_\Sigma} \Phi \cdot D\chi_E = \int_{Y_\Sigma} \Phi \cdot D\chi_F\,
\end{equation}
and thanks to property (\textbf{3}) of Definition~\ref{caliconvering}
\begin{equation}\label{b}
P(E) = \int_{Y_\Sigma} \Phi \cdot D\chi_E\,.
\end{equation}
Moreover, using~\eqref{derivata} we have

\begin{align*}\label{c}
 \int_{Y_\Sigma} \Phi \cdot D\chi_F
&= \sum_{j=1}^m \int_{D} \Phi^j\cdot D\chi_{F^j}
+ \sum_{j'=m+1}^{2m} \int_{\Sigma} \Phi^{j'}\cdot D\chi_{F^{j'}}\\
&= \sum_{j=1}^m\int_{\partial^\ast F^j \cap D}\Phi^j \cdot \nu_{F^{j}} \, d\Ha^{1}
+ \sum_{j'=m+1}^{2m} \int_{\partial^\ast F^{j'} \cap \Sigma}\Phi^{j'} \cdot \nu_{F^{j'}} \, d\Ha^{1}\\
&= \int_{p(\partial^\ast F)\cap D}\sum_{j=1}^m \Phi^j \cdot \nu_{F^{j}}\chi_{\partial^*F^j} \, d\Ha^{1}
+  \int_{p(\partial^\ast F) \cap \Sigma} \sum_{j'=m+1}^{2m}\Phi^{j'} \cdot \nu_{F^{j'}} \chi_{\partial^*F^{j'}}\, d\Ha^{1}\\
&\leq  \int_{p(\partial^\ast F)\cap D}
\Big|
\sum_{j=1}^m \Phi^j \cdot \nu_{F^{j}}\chi_{\partial^*F^j} \Big|\, d\Ha^{1}
+  \int_{p(\partial^\ast F) \cap \Sigma}\Big| 
\sum_{j'=m+1}^{2m}\Phi^{j'} \cdot \nu_{F^{j'}}\chi_{\partial^*F^{j'}}\Big| \, d\Ha^{1}.
\end{align*}
As $F\in \mathscr{P}_{constr}(Y_\Sigma)$, for $\Ha^1$--a.e.  $x\in p(\partial^\ast F) \cap D$ 
there exist exactly two distinct indices $j_1,j_2\in \{1,\ldots, m\}$ such that 
$x\in \partial^\ast F^{j_1}\cap \partial^\ast F^{j_2}$ and $\nu_{F^{j_1}}=-\nu_{F^{j_2}}$. 
Therefore using condition (\textbf{2})
 of Definition~\ref{caliconvering} and the usual identifications given by $\sim$ we get that
\begin{equation}\label{c}
\int_{Y_\Sigma} \Phi \cdot D\chi_F \leq 2\Ha^1(p(\partial^\ast F)) = P(F),
\end{equation}
where the last equality follows from Proposition~\ref{dueh}.\\
Combining Equations~\eqref{a},~\eqref{b} and~\eqref{c} one obtains 
\begin{equation*}
P(E) = \int_{Y_\Sigma} \Phi \cdot D\chi_E= \int_{Y_\Sigma} \Phi \cdot D\chi_F\leq P(F)\,.
\end{equation*}
\end{proof}

\begin{rem}\label{const}
Given $\Phi: Y_\Sigma \rightarrow \R^2$ a calibration for $E\in \mathscr{P}_{constr}(Y_\Sigma)$, 
then  for every $c\in \R^2$ we have that
$\Phi + c$ is a calibration for $E$.
Indeed if $\Phi$ is a calibration for $E\in \mathscr{P}_{constr}(Y_\Sigma)$ 
then it is easy to see that properties (\textbf{1}) and  (\textbf{2}) 
hold for $\Phi + c$ as well. 
It remains to show that if $\int_{Y_\Sigma} \Phi \cdot D\chi_{E} = P(E)$, then
\begin{displaymath}
\int_{Y_\Sigma} (\Phi + c) \cdot D\chi_{E} = P(E)\,,
\end{displaymath} 
that is that for every $c\in \R^2$ we have $\int_{Y_\Sigma} c \cdot D\chi_E = 0$. 
Following the computation in the proof of Theorem~\ref{divteo} we have that 
\begin{equation*}
\int_{Y_\Sigma} c \cdot D\chi_E = \int_{\partial \Omega} c \cdot \nu_{\partial \Omega}\, d\Ha^1 = 0\,.
\end{equation*}
\end{rem}

\subsection{Examples of calibrations}\label{excal}
We present here several examples of calibrations for Steiner configurations in the covering space setting. In the figures below the vector field is implicitly defined as the constant $\Phi=(0,0)$, where no arrows are drawn. Notice that, thanks to Definition \ref{appreg}, a calibration can admit discontinuities in the domain of definition provided that \eqref{app} is fulfilled.

\begin{ex}[Calibration for the segment]

\end{ex}
In order to introduce the reader to the calibration method in our setting we start with the trivial example of the minimality of the segment. In particular we show
that the set $E\in\mathscr{P}_{constr}(Y_\Sigma)$
defined in such a way that the 
closure of its essential boundary is the segment connecting $p_1$ and
$p_2$ is the minimizer of Problem \eqref{minpro2}.
We recall that in this case the number of sheets of the covering is two.
We define $E^1$ (resp. $E^2$) as the coloured subset of $(D,1)$ (resp. $(D,2)$) in Figure~\ref{minduefig} and the set $E$ is obtained as explained in Remark~\ref{presc}.   
\begin{figure}[H]
\centering
\begin{tikzpicture}
\node[inner sep=0pt] at (0,0)
    {\includegraphics[width=0.95\textwidth]{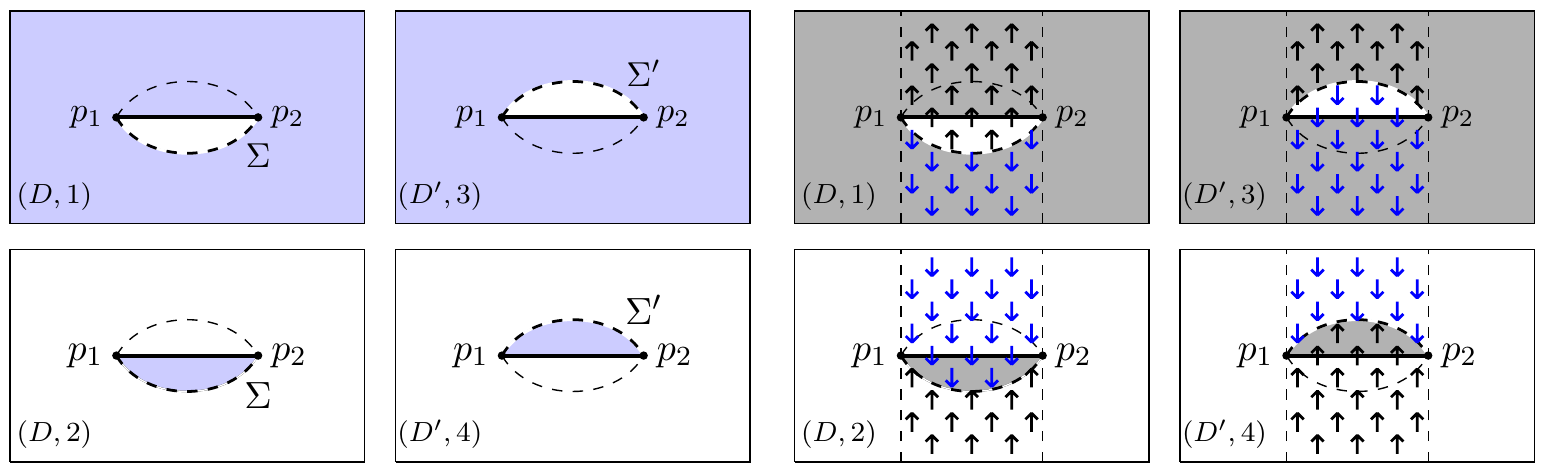}};
\end{tikzpicture}
\caption{The candidate minimizer $E$ and the vector field $\Phi$}\label{minduefig}
\end{figure}

Let us denote by $Q$ the dashed stripe in Figure \ref{minduefig}, by $A_1$ the set enclosed by $Q$ and the cut $\Sigma$ and by $A_2$ the complement of $A_1$ with respect to $Q$.
We define a vector field $\Phi : Y_\Sigma \rightarrow \R$ prescribing its parametrization on the sheets $(D,1)$ and $(D,2)$: 
\begin{displaymath}
\Phi^1(x) = \left\{\begin{array}{ll}
(0,1)  & x\in A_1\\
(0,-1) & x\in A_2\\
0 & \text{otherwise}
\end{array}
\right.
\quad
\Phi^2(x) = \left\{\begin{array}{ll}
(0,-1)  & x\in A_1\\
(0,1) & x\in A_2\\
0 & \text{otherwise }.
\end{array}
\right.
\end{displaymath}
We verify that the unit vector field $\Phi : Y_\Sigma \rightarrow \R$ 
defined as in Figure~\ref{minduefig} is a calibration of $E$. 
First notice that $\Phi$ is an approximately regular divergence free vector field in $Y_\Sigma$. Indeed,  as a consequence of the identifications in the construction of the covering space, $\Phi$ is constant in $p^{-1}(Q)$. 
Since $\Phi$  is a piecewise constant vector field 
satisfying~\eqref{app}, its distributional divergence of is zero.
Condition (\textbf{2}) in Definition~\ref{caliconvering} is trivially satisfied. Finally 
\begin{eqnarray*}
 \int_{Y_\Sigma} \Phi \cdot D\chi_E &=& \sum_{j=1}^2 \int_{D} \Phi^j\cdot D\chi_{E^j} \\
 &=& \int_{p(\partial^\ast E)} \Phi^1 \cdot \nu_{E^1} d\Ha^1 +  \int_{p(\partial^\ast E)} \Phi^2 \cdot \nu_{E^2} d\Ha^1 \\
 &=& 2\Ha^1(p(\partial^\ast E)) \\
 &=& P(E)
\end{eqnarray*}
where we have used Lemma \ref{represent} and Proposition \ref{dueh}.

This shows, thanks to Theorem~\ref{impli}, that $E$
is a minimizer for Problem~\eqref{minpro2}.

\begin{figure}[H]
\centering
\begin{tikzpicture}
\node[inner sep=0pt] at (0,0)
    {\includegraphics[width=0.95\textwidth]{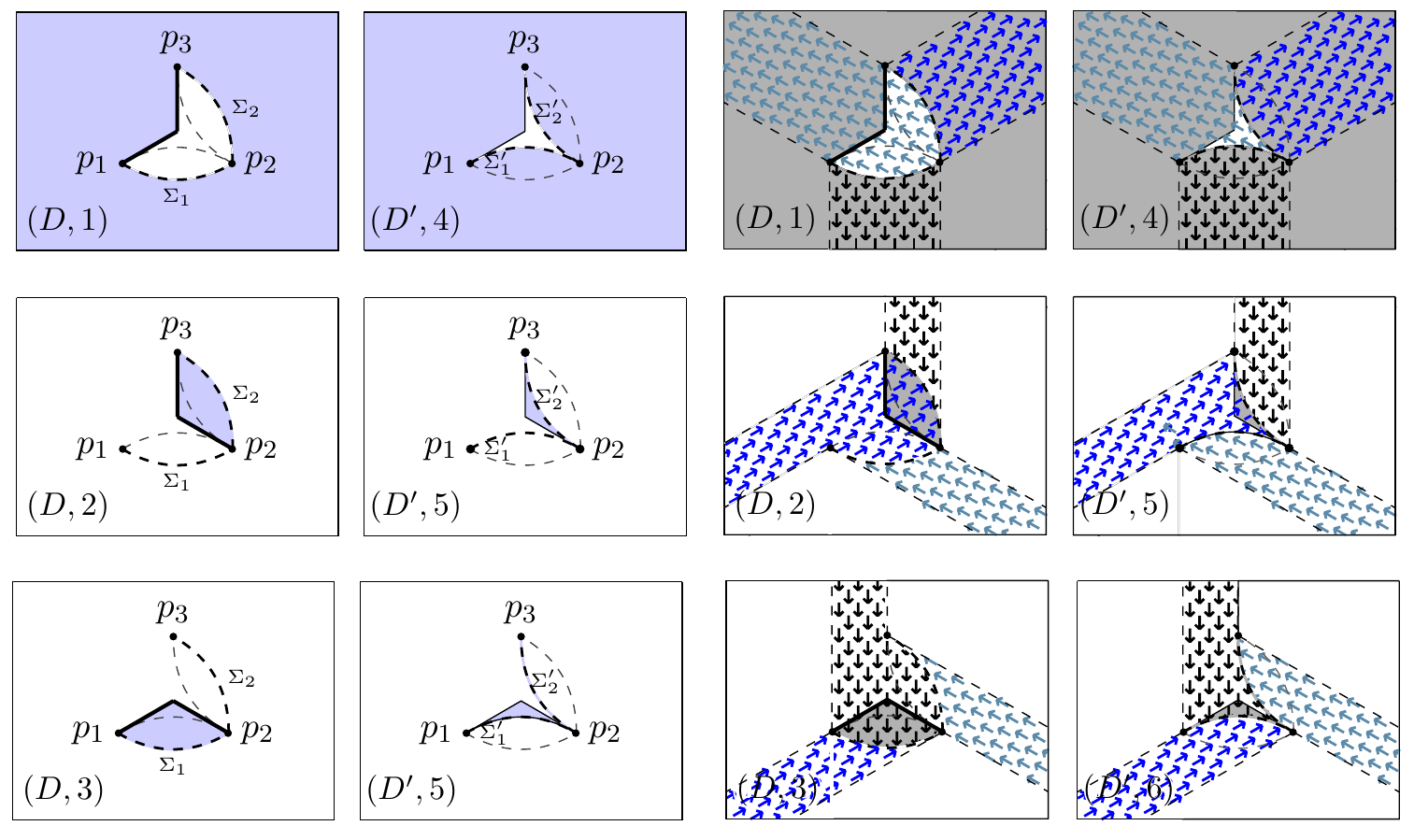}};
\end{tikzpicture}
\caption{Minimizer and calibration for three points, located at the vertices of an equilater triangle}\label{min3punti}
\end{figure}

\begin{ex}[Calibration for three points]

\end{ex}
Let us consider the case where $S$
consists of three points $p_1,p_2,p_3$.
First of all we focus our attention on the case in which the three points are the
vertices of an equilateral triangle.
We set
without loss of generality $p_1=(-\sqrt{3}/2,-1/2), \,p_2=(\sqrt{3}/2,-1/2)$ and $p_3=(0,1)$.

The set $E_{min} \in \mathscr{P}_{ constr}(Y_\Sigma)$ 
(constructed from the minimal triple junction
connecting $p_1,p_2,p_3$ following the procedure of Proposition~\ref{costruzione})
is colored in Figure~\ref{min3punti}.
We define $\Phi$ as in Figure~\ref{min3punti}. The vectors
represented by the arrows are the following:
\begin{equation}\label{calitripuntoeq}
\Phi^1 = (-1, 1/\sqrt{3}) , \quad \Phi^2 = (1, 1/\sqrt{3}),\quad \Phi^3 = (0, -2/\sqrt{3}) \,.
\end{equation}
It is easy to check that the conditions in Definition~\ref{caliconvering} are satisfied.

\begin{rem}
Notice that the calibration for three points $p_1,p_2$ and $p_3$ which are the 
vertices of a triangle with all angles of amplitude less or equal than $120$ degrees
is the same (up to a rotation and minor modifications of the extension outside the cuts and the convex envelope of the points) 
of the calibration for the equilateral triangle that we have just explicitly shown. Indeed in this case the minimal Steiner network is again the union of three segments (possibly with different lenghts) meeting in a triple junction with angles of $120$ degrees.
\end{rem}

Hence, it remains to consider the cases in which the three points of $S$
form a triangle with one angle greater or equal than $120$ degrees.
For simplicity  let $d(p_1,p_2)=d(p_2,p_3)$ and $\alpha$ be the angle between
the segment $\overline{p_1p_2}$ (respectively $\overline{p_2p_3}$) and the horizontal line 
(as in Figure~\ref{posizione3punti}).

\medskip

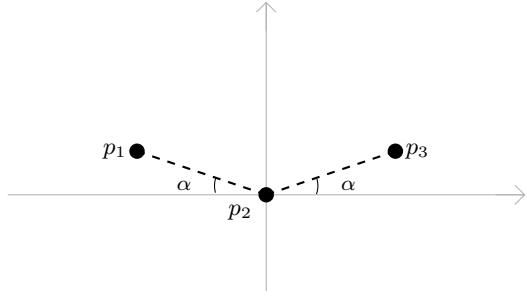
\begin{figure}[H]
\begin{center}
\begin{tikzpicture}[scale=1.7]
 \draw[black!30!white, shift={(2,-1)}, scale=0.75, rotate=-90]
(0,0)to[out= -45,in=135, looseness=1] (0.1,-0.1)
(0,0)to[out= -135,in=45, looseness=1] (-0.1,-0.1)
(0,-0.3)to[out= 90,in=-90, looseness=1] (0,0.003);
\draw[black!30!white, shift={(0,0.5)}, scale=0.75, rotate=0]
(0,0)to[out= -45,in=135, looseness=1] (0.1,-0.1)
(0,0)to[out= -135,in=45, looseness=1] (-0.1,-0.1)
(0,-0.3)to[out= 90,in=-90, looseness=1] (0,0.003);
\draw[black!30!white]
(0,-1.75)--(0,0.5)
(-2,-1)--(0,-1)
(2,-1)--(0,-1);
\draw[thick, dashed]
(-1,-0.66)--(0,-1)
(1,-0.66)--(0,-1);
\fill[black](0,-1) circle (1.7pt);
\fill[black](-1,-0.66) circle (1.7pt);
\fill[black](1,-0.66) circle (1.7pt);
 \draw[color=black,scale=0.1,domain=1.3: 1.9,
smooth,variable=\t,shift={(2,-9.3)},rotate=0]plot({2.*sin(\t r)},
{2.*cos(\t r)}) ; 
 \draw[color=black,scale=0.1,domain=1.3: 1.9,
smooth,variable=\t,shift={(-2,-9.3)},rotate=180]plot({2.*sin(\t r)},
{2.*cos(\t r)}) ; 
\path[font=\scriptsize]
(-0.5,-0.925)node[left]{$\alpha$}
(0.5,-0.925)node[right]{$\alpha$};
\path[font=\footnotesize]
(-1,-0.66)node[left]{$p_1$}
(-0.2,-1)node[below]{$p_2$}
(1,-0.66)node[right]{$p_3$};
\end{tikzpicture}
\end{center}
\caption{The set $S=\{p_1,p_2,p_3\}$ when the three points are the vertices
of a triangle with one angle greater that $120$ degrees.}\label{posizione3punti}
\end{figure}

It is well known that in this case the Steiner configuration 
that connects the points $p_1,p_2,p_3$
reduces to the two segments $\overline{p_1p_2}$ and $\overline{p_2p_3}$.
Again we construct the set
$E_{min} \in \mathscr{P}_{ constr}(Y_\Sigma)$  from the minimal Steiner configuration
following the procedure of Proposition~\ref{costruzione}.

\begin{figure}[H]
\begin{center}
\begin{tikzpicture}
\node[inner sep=0pt] at (0,0)
    {\includegraphics[width=0.485\textwidth]{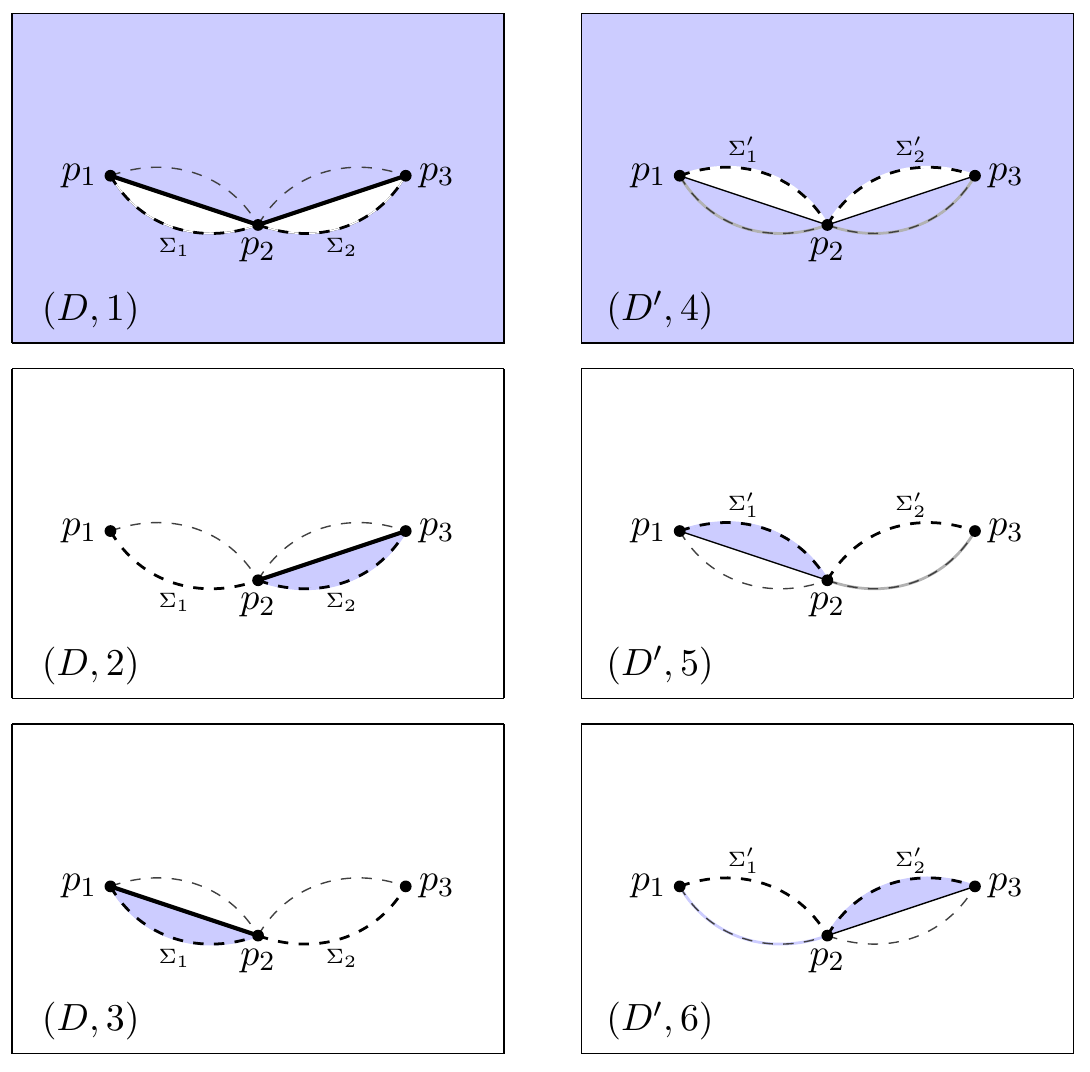}};
\end{tikzpicture}\quad
\begin{tikzpicture}
\node[inner sep=0pt] at (0,0)
    {\includegraphics[width=0.485\textwidth]{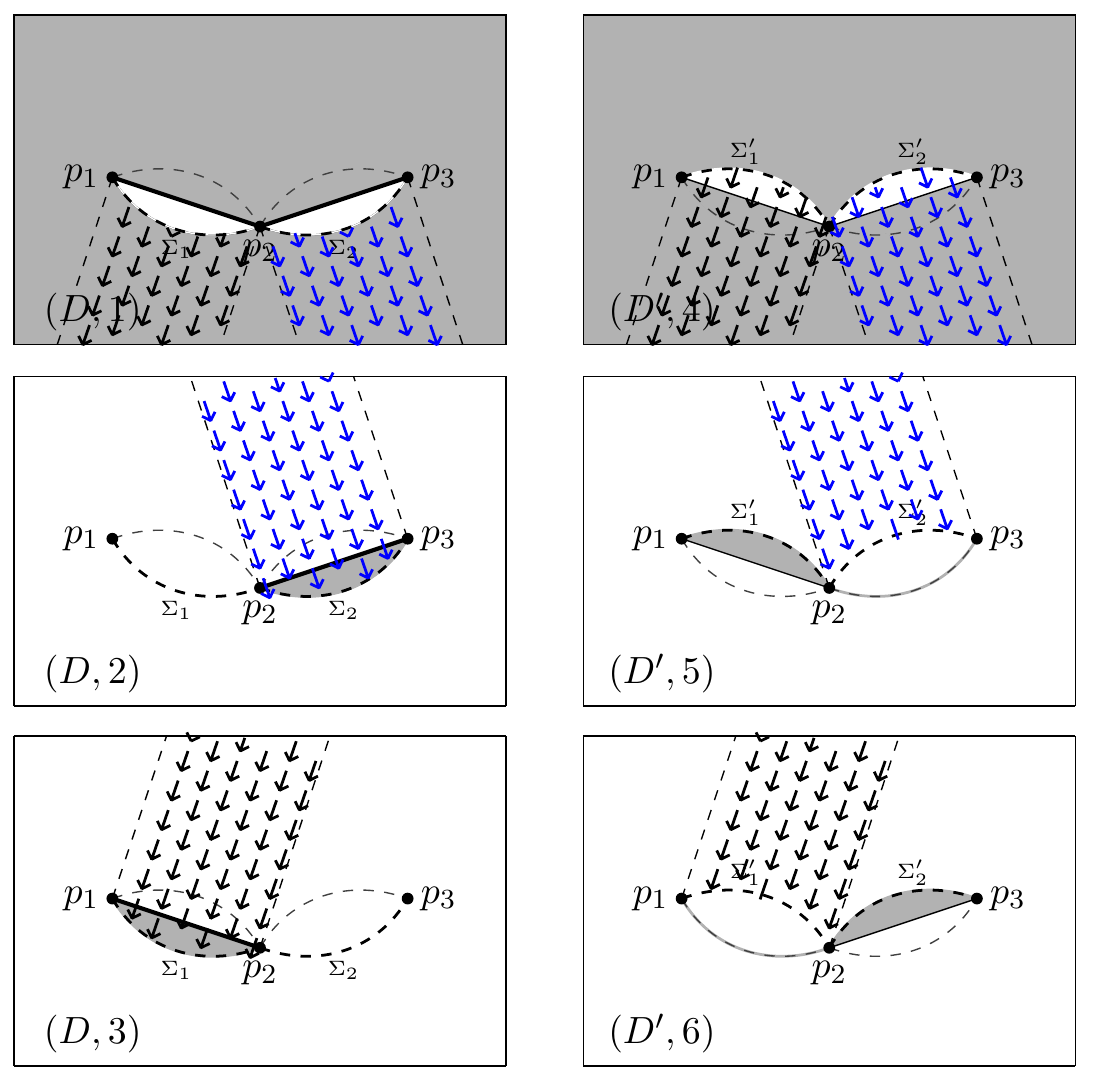}};
\end{tikzpicture}
\end{center}
\caption{Minimizer and calibration for three points}\label{min3deg}
\end{figure}

The calibration, depending on the fixed angle $\alpha \in (0,\pi/6)$, is the following (see Figure \ref{min3deg}):
\begin{equation}\label{caliangologrande}
\Phi^1 = (0, 0) , \quad \Phi^2 = (2\sin\alpha, -2\cos\alpha),\quad \Phi^3 = (-2\sin\alpha, -2\cos\alpha) \,.
\end{equation}

\begin{rem}
For $\alpha=\pi/6$ the calibration~\eqref{caliangologrande}
coincides, up to a rotation and a translation (see Remark \ref{const}), to the one for the triangle~\eqref{calitripuntoeq}.
\end{rem}

\begin{figure}[H]
\centering
\begin{tikzpicture}
\node[inner sep=0pt] at (0,0)
    {\includegraphics[width=1\textwidth]{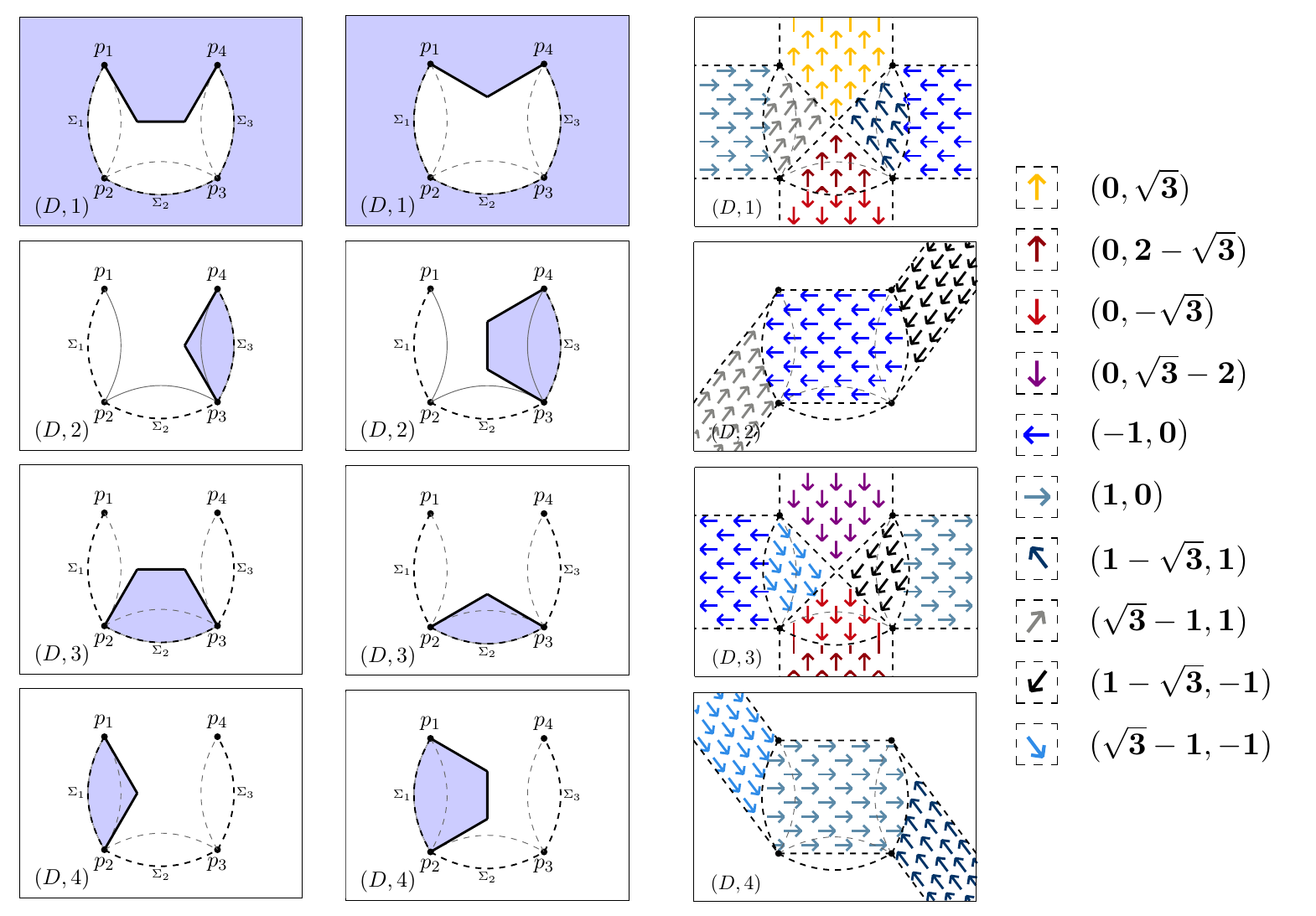}};
\end{tikzpicture}
\caption{Minimizers and calibration for the vertices of a square}\label{cal4}
\end{figure}
\begin{ex}[Calibration for the vertices of a square]

\end{ex}
Given $S=\{p_1,p_2,p_3,p_4\}$ located at the vertices of a square, the Steiner problem does not admit a unique solution. 
Therefore a calibration (if it exists) must calibrate all the minimizers.

The two candidates minimizers $E_{\min,1}, E_{\min,2}$ for Problem \eqref{minpro2} 
are shown in Figure~\ref{cal4} on the left (we draw only the sheets $(D,i)$ for $i=1,\ldots,4$).
The calibration for $E_{min, 1}$ and $E_{min, 2}$ is defined as in Figure~\ref{cal4} on the right.  
Again, we draw the vector field only in the sheets $(D,i)$ for $i=1,\ldots,4$ and we employ the usual convention that where the vector field is not written it is equal to $(0,0)$.
The reader can easily verify that the vector field defined in this way is a calibration on $Y_\Sigma$ for both the minimizers $E_{min, 1}$ and $E_{min, 2}$.

%

\section{Calibrations in families}\label{famiglie}
The aim of this section is to introduce a weaker 
definition of calibration. 
The idea is finding a way to divide the set of competitors 
in different families
defining an appropriate notion of calibration in each family
with a  weaker condition (\textbf{2}). 
Then we calibrate the minimizers of each family separately and in conclusion we compare the energy
of the minimizers to find the explicit solutions of Problem~\eqref{minpro2}. 

\medskip

\begin{dfnz}
Let $\mathcal{J} \subset \{1,\ldots,m\}\times \{1,\ldots,m\}$ 
be a subset of the Cartesian product of the indices. 
Given $E \in \mathscr{P}_{constr}(Y_\Sigma)$ we define
\begin{equation*}
E^{i,j}:= \partial^\ast E^i\cap \partial^\ast E^j
\end{equation*}
and
\begin{equation}\label{famiglia}
\mathcal{F}(\mathcal{J}) := \{E \in \mathscr{P}_{constr}(Y_\Sigma):
 \Ha^1(E^{i,j}) = 0 \mbox{ for every } (i,j) \in \mathcal{J}\}.
\end{equation}
\end{dfnz}

\begin{dfnz}[Calibration in the family $\mathcal{F}(\mathcal{J})$]\label{partcal}
Given $E \in \mathscr{P}_{constr}(Y_\Sigma)$, 
a calibration for $E$ in $\mathcal{F}(\mathcal{J})$ 
is an \emph{approximately regular} vector field $\Phi :Y_{\Sigma}\to\mathbb{R}^2$ such that
\begin{enumerate}
\item $\div\Phi=0$ (in the sense of the distributions);
\item $|\Phi^i(x) - \Phi^j(x)| \leq 2$ for every $i,j = 1,\ldots m$ 
such that $(i,j) \notin \mathcal{J}$ and for every $x\in D$;
\item $\displaystyle \int_{Y_\Sigma} \Phi \cdot D\chi_{E}= P(E)$.
\end{enumerate}
\end{dfnz}

\begin{prop}\label{implipar}
Given $\mathcal{J}$ as above and $E\in\mathcal{F}(\mathcal{J})$, 
if $\Phi :Y_{\Sigma}\to\mathbb{R}^2$ is a calibration for $E$ in the family $\mathcal{F}(\mathcal{J})$, then
\begin{equation*}
P(E)\leq P(F)
\end{equation*}
for every $F\in \mathcal{F}(\mathcal{J})$. In particular $E$ minimizes the perimeter in the class $\mathcal{F}(\mathcal{J})$.
\end{prop}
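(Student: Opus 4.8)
The plan is to adapt the proof of Theorem~\ref{impli} almost verbatim, the only change being that the global size condition is replaced by the weaker condition (2) of Definition~\ref{partcal}, whose restricted validity is compensated by the defining property of the family. Fix $E\in\mathcal{F}(\mathcal{J})$ together with its calibration $\Phi$, and let $F\in\mathcal{F}(\mathcal{J})$ be an arbitrary competitor. First I would apply the divergence theorem on coverings (Proposition~\ref{divteo}), which is legitimate since $\Phi$ is approximately regular and divergence free by condition (1), to get $\int_{Y_\Sigma}\Phi\cdot D\chi_E=\int_{Y_\Sigma}\Phi\cdot D\chi_F$. Together with condition (3), i.e. $\int_{Y_\Sigma}\Phi\cdot D\chi_E=P(E)$, the whole statement reduces to the single inequality $\int_{Y_\Sigma}\Phi\cdot D\chi_F\leq P(F)$.

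To establish this bound I would reproduce the chain of manipulations in the proof of Theorem~\ref{impli}: expand $\int_{Y_\Sigma}\Phi\cdot D\chi_F$ by the representation formula~\eqref{derivata}, rewrite every sheet contribution as a boundary integral of $\Phi^j\cdot\nu_{F^j}$ over $\partial^\ast F^j$, regroup these integrals over $p(\partial^\ast F)$, and estimate the integrand by its absolute value. Since $F\in\mathscr{P}_{constr}(Y_\Sigma)$, for $\Ha^1$-a.e. $x\in p(\partial^\ast F)\cap D$ there are exactly two indices $j_1,j_2\in\{1,\ldots,m\}$ with $x\in\partial^\ast F^{j_1}\cap\partial^\ast F^{j_2}$ and $\nu_{F^{j_1}}=-\nu_{F^{j_2}}$, so that the relevant integrand equals $\vert(\Phi^{j_1}(x)-\Phi^{j_2}(x))\cdot\nu_{F^{j_1}}\vert$.

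The crucial and genuinely new step is the following observation: the active pair $(j_1,j_2)$ can belong to $\mathcal{J}$ only when $x\in F^{j_1,j_2}=\partial^\ast F^{j_1}\cap\partial^\ast F^{j_2}$, and by the very definition~\eqref{famiglia} of the family we have $\Ha^1(F^{i,j})=0$ for every $(i,j)\in\mathcal{J}$ because $F\in\mathcal{F}(\mathcal{J})$. Hence the set of boundary points whose associated pair lies in $\mathcal{J}$ is $\Ha^1$-negligible, so for $\Ha^1$-a.e. $x$ one has $(j_1,j_2)\notin\mathcal{J}$, and precisely on these points the weaker size condition (2) of Definition~\ref{partcal} is available, yielding $\vert(\Phi^{j_1}(x)-\Phi^{j_2}(x))\cdot\nu_{F^{j_1}}\vert\leq\vert\Phi^{j_1}(x)-\Phi^{j_2}(x)\vert\leq 2$. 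Integrating and invoking Proposition~\ref{dueh} gives $\int_{Y_\Sigma}\Phi\cdot D\chi_F\leq 2\Ha^1(p(\partial^\ast F))=P(F)$, and combining with the two equalities of the first paragraph produces $P(E)\leq P(F)$, whence $E$ minimizes the perimeter in $\mathcal{F}(\mathcal{J})$.

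I expect the main obstacle to be bookkeeping rather than conceptual: one must check that the contribution of $p(\partial^\ast F)$ lying on the cut $\Sigma$, which is handled through the identifications $\sim$ exactly as in Theorem~\ref{impli}, is also matched to a pair of indices in $\{1,\ldots,m\}$, so that the negligibility argument above covers it as well. Once this reduction to unprimed sheets is carried out the proof is a direct transcription of Theorem~\ref{impli}; the entire novelty is that the size condition is only ever tested on pairs $(j_1,j_2)\notin\mathcal{J}$, which is sufficient precisely because every competitor in $\mathcal{F}(\mathcal{J})$ has $\Ha^1$-negligible overlap of essential boundaries along each pair in $\mathcal{J}$.
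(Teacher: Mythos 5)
Your proposal is correct and follows exactly the route the paper intends: the paper omits the proof with the remark that it is ``similar to that of Theorem~\ref{impli}'', and your adaptation --- repeating that argument and observing that for $F\in\mathcal{F}(\mathcal{J})$ the pairs $(j_1,j_2)\in\mathcal{J}$ only occur on an $\Ha^1$-negligible subset of $p(\partial^\ast F)$, so the weakened size condition (2) suffices --- is precisely the missing detail.
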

\begin{proof}
The proof is similar to that of Theorem~\ref{impli}, and it is omitted.
\end{proof}

We want to use Definition \ref{partcal}  to 
validate the minimality of a candidate minimizer for the Steiner problem.
To this aim we need to assign any competitor in $\mathscr{P}_{constr}(Y_\Sigma)$ to at least one family.
Notice that there exist sets $E\in \mathscr{P}_{constr}(Y_\Sigma)$
such that $ \Ha^1(E^{i,j})>0$  for every couple of indices $(i,j)\in  \{1,\ldots,m\}\times \{1,\ldots,m\}$,
hence it is not possible to cover $\mathscr{P}_{constr}(Y_\Sigma)$  
with non-trivial families $\mathcal{F}(\mathcal{J}_i)$.
For this reason we restrict to $\mathscr{P}^T_{constr}(Y_\Sigma)$.

\begin{dfnz}
We call $T$ the set of all connected networks without loops (see Definition~\ref{conetw}).
Moreover we call $\mathscr{P}_{constr}^T(Y_\Sigma)$ 
the set of all  $E\in\mathscr{P}_{constr}(Y_\Sigma)$ such that
$\overline{p(\partial^\ast E)}$ is an element of $T$.
\end{dfnz}
Consider the following:
\begin{prob}\label{stp}
Given $S$ a finite sets of points in $\mathbb{R}^2$
we look for a network  in $T$ with minimal length that connects
the points of $S$.
\end{prob}
It is well known that Problem~\ref{stp} is equivalent to the Steiner problem defined in~\eqref{ste} (see, for instance \cite{paopaostepanov}).
Therefore if we define $\mathscr{A}^T_{constr}(S) = \inf \left\{P(E) : E\in\mathscr{P}^T_{constr}(Y_\Sigma)\right\}$ we infer, thanks to Theorem~\ref{samelength}, that $\mathscr{A}^T_{constr}(S) = \mathscr{A}_{constr}(S) 
$.

\begin{prop}\label{mincalifam}
Suppose that there exists $\mathcal{J}_1, \ldots, \mathcal{J}_N \subset \{1,\ldots,m\}\times \{1,\ldots,m\}$ such that
\begin{equation}\label{parti}
\mathscr{P}^T_{constr}(Y_\Sigma) \subseteq \bigcup_{i = 1}^N \mathcal{F}(\mathcal{J}_i).
\end{equation}
If for every $i=1,\ldots,N$ there exists a calibration
$\Phi_i$  for $E_i$ in $\mathcal{F}(\mathcal{J}_i)$, then 
\begin{equation}\label{mineq}
\mathscr{A}_{\rm{constr}}(S)= \min\{P(E_i): i= 1,\ldots,N\}.
\end{equation}
In other words the set $E_i$ with less perimeter is the absolute minimizer of Problem~\eqref{minpro2}.
\end{prop}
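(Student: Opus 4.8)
The plan is to prove the two inequalities $\mathscr{A}_{\mathrm{constr}}(S)\le \min_i P(E_i)$ and $\min_i P(E_i)\le \mathscr{A}_{\mathrm{constr}}(S)$ separately, and then read off that the minimum is actually attained. For the first inequality I would simply observe that each candidate $E_i$ is an admissible competitor: by the hypothesis there is a calibration for $E_i$ in $\mathcal{F}(\mathcal{J}_i)$, so in particular $E_i\in\mathcal{F}(\mathcal{J}_i)\subseteq\mathscr{P}_{\mathrm{constr}}(Y_\Sigma)$. Since $\mathscr{A}_{\mathrm{constr}}(S)$ is the infimum of $P$ over all of $\mathscr{P}_{\mathrm{constr}}(Y_\Sigma)$, we get $\mathscr{A}_{\mathrm{constr}}(S)\le P(E_i)$ for every $i$, hence $\mathscr{A}_{\mathrm{constr}}(S)\le\min_i P(E_i)$.

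For the reverse inequality the crucial reduction is to pass from $\mathscr{P}_{\mathrm{constr}}(Y_\Sigma)$ to the loop-free class $\mathscr{P}^T_{\mathrm{constr}}(Y_\Sigma)$. This is exactly where the identity $\mathscr{A}^T_{\mathrm{constr}}(S)=\mathscr{A}_{\mathrm{constr}}(S)$, established just above via Theorem~\ref{samelength}, enters, and it is the one genuinely non-formal ingredient: as the paper notes, the families $\mathcal{F}(\mathcal{J}_i)$ need not cover all of $\mathscr{P}_{\mathrm{constr}}(Y_\Sigma)$ (there are competitors with $\Ha^1(E^{i,j})>0$ for all pairs), so the covering hypothesis~\eqref{parti} is only assumed on $\mathscr{P}^T_{\mathrm{constr}}(Y_\Sigma)$. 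I would therefore fix an arbitrary $E\in\mathscr{P}^T_{\mathrm{constr}}(Y_\Sigma)$, use~\eqref{parti} to pick an index $i$ with $E\in\mathcal{F}(\mathcal{J}_i)$, and then invoke Proposition~\ref{implipar}: since $\Phi_i$ calibrates $E_i$ in $\mathcal{F}(\mathcal{J}_i)$ and $E$ lies in the same family, $P(E_i)\le P(E)$. Consequently
\begin{equation*}
\min_{j} P(E_j)\le P(E_i)\le P(E).
\end{equation*}

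Taking the infimum over all $E\in\mathscr{P}^T_{\mathrm{constr}}(Y_\Sigma)$ yields $\min_j P(E_j)\le \mathscr{A}^T_{\mathrm{constr}}(S)=\mathscr{A}_{\mathrm{constr}}(S)$, which is the desired reverse inequality. Combining the two bounds gives the equality~\eqref{mineq}. Finally, letting $i_0$ be an index realizing $\min_i P(E_i)$, we have $E_{i_0}\in\mathscr{P}_{\mathrm{constr}}(Y_\Sigma)$ with $P(E_{i_0})=\mathscr{A}_{\mathrm{constr}}(S)$, so $E_{i_0}$ is an absolute minimizer of Problem~\eqref{minpro2}. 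I expect no serious obstacle in the argument itself, since it is a formal consequence of Proposition~\ref{implipar} together with the covering property; the only point requiring care is the legitimacy of restricting the infimum to the loop-free class, which must be justified by the equivalence $\mathscr{A}^T_{\mathrm{constr}}(S)=\mathscr{A}_{\mathrm{constr}}(S)$ before the covering hypothesis can be used.
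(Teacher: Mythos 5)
Your proposal is correct and follows essentially the same route as the paper: fix a competitor in $\mathscr{P}^T_{constr}(Y_\Sigma)$, use the covering hypothesis to place it in some $\mathcal{F}(\mathcal{J}_i)$, and apply Proposition~\ref{implipar}, with the reduction to the loop-free class justified by $\mathscr{A}^T_{constr}(S)=\mathscr{A}_{constr}(S)$. The paper's proof only writes out the inequality $\min_j P(E_j)\le P(F)$ and leaves the trivial reverse bound implicit, whereas you spell out both directions; this is a matter of exposition, not of substance.
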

\begin{proof}
Fix $F\in \mathscr{P}^T_{constr}(Y_\Sigma)$. 
Thanks to~\eqref{parti} there exists at least one $i\in \{1,\ldots,N\}$ such that 
$F\in \mathcal{F}(\mathcal{J}_i)$. Proposition~\ref{implipar} implies that 
$P(E_i)\leq P(F)$.
Thus 
\begin{equation*}
\min\{P(E_i): i= 1,\ldots,N\} \leq P(F)\,,
\end{equation*}
that is~\eqref{mineq}.
\end{proof}
\begin{rem}
In Subsection~\ref{aaaah} we will see that it is relevant \emph{how coarse} is the decomposition of $\mathscr{P}^T_{constr}(Y_\Sigma)$ in families $\mathcal{F}(\mathcal{J}_i)$. Indeed if the cardinality of $\mathcal{J}_i$ is small, then one needs less families to cover $\mathscr{P}^T_{constr}(Y_\Sigma)$, but the task of finding an explicit calibration in $\mathcal{F}(\mathcal{J}_i)$ results more challenging. 
\end{rem}

\subsection{Examples}\label{aaaah}
Suppose that the finite set
$S=\{p_1,\ldots,p_m\}$ consists of $m$ points 
located on the boundary of a smooth, open, convex set $A\subset \mathbb{R}^2$.
For simplicity we label the points on the boundary of $A$ in a anticlockwise order
and from now on we consider the indices $i=1,\ldots, m$ cyclically identified modulus $m$.

It is not restrictive to 
suppose that each competitor $\Gamma\in T$ is contained in $A$, so that
$T$ induces a partition of 
$A\setminus \Gamma$ in $m$ connected 
sets $\{A_\Gamma^1,\ldots,A_\Gamma^m\}$
labelled in such a way that $\{p_i,p_{i+1}\}\subset\partial A^{m+1-i}$.
Calling $A_\Gamma^{i,j} := \partial A_\Gamma^{i} \cap \partial A_\Gamma^{j}$ for every $i,j= 1,\ldots,m$, the Steiner problem can be rephrased as 
\begin{equation}\label{ste2}
\min\left\{\sum_{i<j} \Ha^1(A_\Gamma^{i,j}): \Gamma\in T\right\}\,.
\end{equation}

We now suggest a general and explicit way to cover $\mathscr{P}^T_{constr}(Y_\Sigma)$
with families $\mathcal{F}(\mathcal{J}_i)$ in order to use the notion of calibration in $\mathcal{F}(\mathcal{J}_i)$ and Proposition~\ref{mincalifam}
to show explicit solution of Problem~\eqref{minpro2}.

\begin{lemma}\label{1split}
Consider $\Gamma \in T$ inducing the partition $\{A_\Gamma^1,\ldots,A_\Gamma^m\}$
and suppose that there exists $i\neq j$ such that  $A^{i,j}_\Gamma \neq \emptyset$. 
Then for every $0\leq k_1<i<k_2<j \leq m$ (or $0\leq j<k_1<i<k_2 \leq m$) we have that
\begin{equation*}
\Ha^1(A_\Gamma^{k_1,k_2})=0\,.
\end{equation*}
\end{lemma}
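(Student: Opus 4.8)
The statement is a \emph{non-crossing} (planarity) property of the regions cut out by the tree $\Gamma$, and the plan is to prove it by a Jordan-curve separation argument. First I would record the geometric meaning of the labelling: since $\{p_i,p_{i+1}\}\subset \partial A_\Gamma^{m+1-i}$, the region $A_\Gamma^k$ is exactly the one adjacent to the boundary arc of $\partial A$ between $p_{m+1-k}$ and $p_{m+2-k}$; hence, travelling anticlockwise along $\partial A$, the region indices are encountered in the (cyclically) decreasing order $m,m-1,\dots,1$. In particular the pairs $\{k_1,k_2\}$ and $\{i,j\}$ \emph{interleave} cyclically in precisely the two situations listed in the statement.

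Next I would build a separating arc. Fix a point $x\in A_\Gamma^{i,j}=\partial A_\Gamma^i\cap\partial A_\Gamma^j\subset\Gamma$, which exists by hypothesis. Because $A_\Gamma^i$ and $A_\Gamma^j$ are connected components of the open set $A\setminus\Gamma$ and each has both $x$ and its own boundary arc in its closure, I can choose points $q_i,q_j$ in the \emph{interior} of the arcs associated with $A_\Gamma^i$ and $A_\Gamma^j$ (so that $q_i,q_j\notin\{p_1,\dots,p_m\}$) together with a simple arc $\sigma$ from $q_i$ to $q_j$ whose interior lies in $A_\Gamma^i\cup\{x\}\cup A_\Gamma^j$. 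The crucial (and automatic) feature of this construction is that, since $A_\Gamma^i$ and $A_\Gamma^j$ are disjoint from $\Gamma$, we have $\sigma\cap\Gamma=\{x\}$.

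I would then invoke the Jordan curve theorem: the arc $\sigma$ splits the open disk $A$ into two components $C_1,C_2$ with $\overline{C_1}\cap\overline{C_2}=\sigma$, and it splits $\partial A$ into two arcs $\mathcal{A}_1,\mathcal{A}_2$ meeting only at $q_i,q_j$, with $\mathcal{A}_t\subset\partial C_t$. Each region $A_\Gamma^k$ with $k\neq i,j$ is connected and disjoint from $\sigma$, hence lies entirely in the component adjacent to its own boundary arc (which is well defined, as $q_i,q_j$ are interior to the arcs of $A_\Gamma^i,A_\Gamma^j$ and so lie on neither arc of $A_\Gamma^{k_1}$ nor of $A_\Gamma^{k_2}$). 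By the bookkeeping of the first paragraph, one component collects precisely the regions whose index lies strictly between $\min(i,j)$ and $\max(i,j)$, and the other collects the regions with index below $\min(i,j)$ or above $\max(i,j)$. In either interleaving case one checks directly that $k_1$ and $k_2$ fall into different components: e.g. if $k_1<i<k_2<j$ then $k_1<\min(i,j)=i$ while $\min(i,j)<k_2<\max(i,j)=j$, and symmetrically in the case $j<k_1<i<k_2$.

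Finally, if $A_\Gamma^{k_1}\subset C_1$ and $A_\Gamma^{k_2}\subset C_2$, then $A_\Gamma^{k_1,k_2}=\partial A_\Gamma^{k_1}\cap\partial A_\Gamma^{k_2}\subset\overline{C_1}\cap\overline{C_2}\cap\Gamma=\sigma\cap\Gamma=\{x\}$, whence $\Ha^1(A_\Gamma^{k_1,k_2})\le \Ha^1(\{x\})=0$, which is the claim. I expect the only delicate points to be the construction of the \emph{simple} arc $\sigma$ joining the two prescribed boundary arcs through $x$ (using path-connectedness of the planar regions $A_\Gamma^i,A_\Gamma^j$ and extraction of a simple sub-arc from a connecting path) and the precise identification, via the labelling convention, of which regions land on which side of $\sigma$; once these are in place the conclusion follows immediately.
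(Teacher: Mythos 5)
Your argument is correct, and at its core it is the same planarity/separation idea the paper uses, but run in the opposite logical direction. The paper argues by contraposition: assuming $\Ha^1(A_\Gamma^{k_1,k_2})>0$, it glues the two regions into the open connected set $\mathrm{int}\bigl(\overline{A_\Gamma^{k_1}\cup A_\Gamma^{k_2}}\bigr)$, which by the cyclic interleaving of the indices separates $A_\Gamma^i$ from $A_\Gamma^j$ inside $A$ and forces $A_\Gamma^{i,j}=\emptyset$. You instead start from the hypothesis $A_\Gamma^{i,j}\neq\emptyset$ and build a crosscut $\sigma$ of the disk through $A_\Gamma^i\cup\{x\}\cup A_\Gamma^j$ that separates $A_\Gamma^{k_1}$ from $A_\Gamma^{k_2}$. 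The two arguments are dual: each uses exactly the datum it is handed (the paper needs positive $\Ha^1$-measure of $A_\Gamma^{k_1,k_2}$ so that the glued interior is connected; you need only a single point of $A_\Gamma^{i,j}$ to thread the arc). Your version costs a little more topology --- accessibility of the boundary point $x$ from both adjacent components (which holds here because $\partial A_\Gamma^i$ and $\partial A_\Gamma^j$ are locally connected, being finite unions of arcs, but deserves the explicit justification you flag) and the Jordan/crosscut separation theorem --- whereas the paper's two-line version quietly relies on the reader accepting that the glued set is connected and separating. In exchange you get the marginally stronger conclusion that $A_\Gamma^{k_1,k_2}$ is contained in $\{x\}$ together with finitely many points of $S$, not merely that it is $\Ha^1$-negligible. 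Both proofs hinge on the same combinatorial fact, which you state correctly: under the labelling convention the pairs $\{k_1,k_2\}$ and $\{i,j\}$ interleave cyclically on $\partial A$, so the corresponding boundary arcs land on opposite sides of the separating object.
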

\begin{proof}
By contradiction it is enough to notice that if $\Ha^1(A_\Gamma^{k_1,k_2})>0$, then the interior of $\overline{A_\Gamma^{k_1} \cup A_\Gamma^{k_2}}$ is an open connected set that separates $A_\Gamma^i$ and $A_\Gamma^j$. Hence we infer $A_{\Gamma}^{i,j} = \emptyset$.
\end{proof}

We construct the covering space $Y_\Sigma$
choosing an admissible pair of cuts in the following way:
the cut $\Sigma'$ coincides with $\partial A$ and the cut $\Sigma$ lies outside $A$.
Then, thanks to Proposition~\ref{costruzione}, it is possible to
associate to the network $\Gamma$, and hence to the partition $\{A_\Gamma^1,\ldots,A_\Gamma^m\}$,
a set $E_{\Gamma}$ in the covering space $Y_\Sigma$ (simply setting
 $E_\Gamma^j=A_\Gamma^j$) such that
\begin{equation}\label{poi}
p(\overline{\partial^\ast E_\Gamma}) =\Gamma= \bigcup_{i,j}A_\Gamma^{i,j} \quad \mbox{and} 
\quad E^{i,j}_\Gamma = A_\Gamma^{i,j}. 
\end{equation}

Thanks to~\eqref{poi} Lemma~\ref{1split} 
is trivially true replacing $A_\Gamma^{i,j}$ with $E_\Gamma^{i,j}$.

We define
\begin{equation*}
\mathcal{F}_{i,j} = \{\Gamma \in T: A^{i,j}_\Gamma \neq \emptyset\}\,.
\end{equation*}
It is easy to see that $T$ can be covered in the following way:
\begin{equation}\label{spliiiiit}
T = \bigcup_{k=2}^{\lfloor\frac{m}{2}\rfloor} \bigcup_{ |i-j| = k}  \mathcal{F}_{i,j}\,.
\end{equation}
This covering induces automatically 
a covering of $\mathscr{P}_{constr}(Y_\Sigma)$.
Consider for instance a family $\mathcal{F}_{i,i+\lfloor\frac{m}{2}\rfloor}$ 
for a fixed $i$ in $\{1,\ldots,m\}$.
Thanks to Lemma~\ref{1split} 
we have that
\begin{displaymath}
\Ha^1(E^{k,l}_{\Gamma}) = 0 
\end{displaymath}
for all $(k,l) \in\{i+1,\ldots,i+\lfloor\frac{m}{2}\rfloor-1\}\times\{i+\lfloor\frac{m}{2}\rfloor+1,i-1\}$. 
This property defines a family $\mathcal{F}(\mathcal{J})$ according to~\eqref{famiglia}
with $\mathcal{J}=\{i+1,\ldots,i+\lfloor\frac{m}{2}\rfloor-1\}\times\{i+\lfloor\frac{m}{2}\rfloor+1,i-1\}$.

\begin{ex}[Calibration for the vertices of the regular pentagon]\label{penta}

\end{ex}
Consider $S=\{p_1,p_2,p_3,p_4,p_5\}$ located at the vertices of a regular pentagon. 
First we divide the elements of $\mathscr{P}^T_{constr}(Y_\Sigma)$ in families.
According to~\eqref{spliiiiit}, we cover $T$ with five families $\mathcal{F}_{i,j}$ as follows:
\begin{equation*}
T= \bigcup_{|i-j| = 2}  \mathcal{F}_{i,j}\,.
\end{equation*} 
Then we split again each family $\mathcal{F}_{i,j}$ 
in two subfamilies $\mathcal{F}^1_{i,j}$ and $\mathcal{F}^2_{i,j}$ defined as
\begin{equation*}
\mathcal{F}^1_{i,j} = \mathcal{F}_{i,j} \cap \mathcal{F}_{i,j+1} \quad \mbox{and} \quad \mathcal{F}^2_{i,j} = 
\mathcal{F}_{i,j} \cap \mathcal{F}_{i-1,j}\,.
\end{equation*}
This produces in principle $10$ families, but it is easy to see that $\mathcal{F}^k_{i,j}=\mathcal{F}^{k'}_{i',j'}$
for some $i,j,i',j'\in\{1,2,3,4,5\}$ and $k,k'\in\{1,2\}$.
Hence, 
we obtain 
\begin{equation*}
\mathscr{P}^T_{constr}(Y_\Sigma)=\bigcup_{i=1}^5\mathcal{F}(\mathcal{J}_i)\,,
\end{equation*}
with 
\begin{align*}
\mathcal{J}_1=\{(1,3),(1&,4),(2,4)\},\quad\mathcal{J}_2=\{(1,3),(1,4),(3,5)\},
\quad\mathcal{J}_3=\{(1,3),(2,5),(3,5)\}, \\
&\mathcal{J}_4=\{(1,4),(2,4),(2,5)\}, \quad\mathcal{J}_5=\{(2,4),(2,5),(3,5)\}\,.
\end{align*}

It is known that the Steiner problem for $S$ has $5$ minimizers 
$\mathscr{S}_i$ for $i = 1,\ldots,5$ (obtained by rotation one from the other).
Denoted by $E_{min,i}\in \mathscr{P}^T_{constr}(Y_\Sigma)$ for $i = 1,\ldots,5$
the sets associated with the minimizers of the Steiner problem,
it is easy to see that 
that $E_{min,i}\in\mathcal{F}(\mathcal{J}_i)$ for $i = 1,\ldots,5$.
Our aim is to prove that $E_{min,i}$ is a minimizer in $\mathcal{F}(\mathcal{J}_i)$
constructing a vector field $\Phi_i$ that is a calibration for $E_{min,i}$ in $\mathcal{F}(\mathcal{J}_i)$.

On the left of Figure~\ref{cali5} is shown the set 
$E_{min,5}\in \mathscr{P}^T_{constr}(Y_\Sigma)$, on the right 
a calibration for $E_{min,5}$ in $\mathcal{F}(\mathcal{J}_{5})$. 
The vector field represented by the arrows is the following:
\begin{align*}
\Phi^1=(0,0),\quad \Phi^2 =(2,0), \quad \Phi^3=(1,-\sqrt{3}),\quad 
\Phi^4(-1,-\sqrt{3}),\quad \Phi^5=(-2,0)\,
\end{align*}
and it is easy to verify that it is indeed a calibration for $E_{min,5}$ in $\mathcal{F}(\mathcal{J}_{5})$. 

As the minimizers $\mathscr{S}_i$ with $i = 1,\ldots,5$ for the 
Steiner problem are
 obtained by rotation one from the other, 
it is easy to construct 
for $E_{min,i}$  (with $i=1,2,3,4$)
a calibration in $\mathcal{F}(\mathcal{J}_i)$ similar to
the one for  $E_{min,5}$ in $\mathcal{F}(\mathcal{J}_5)$.

To summarize we have split the set $\mathscr{P}_{constr}^T(Y_\Sigma)$ and
we have exhibited a calibration in each family for the corresponding $E_{min,i}$. 
As $\mathcal{H}^1(\mathscr{S}_i)=\mathcal{H}^1(\mathscr{S}_j)$ for $i,j\in\{1,\ldots,5\}$,
thanks to Proposition~\ref{dueh}
we have also that $P(E_{min,i}) = P(E_{min,j})$ for every $i,j\in \{1,\ldots,5\}$. 
Thus applying Proposition~\ref{mincalifam} we infer that $E_{min,i}$ are minimizers 
of Problem~\eqref{minpro2} for every $i=1\ldots,5$, as we wanted to prove.

\begin{figure}[H]
\centering
\begin{tikzpicture}
\node[inner sep=0pt] at (0,0)
    {\includegraphics[width=0.44\textwidth]{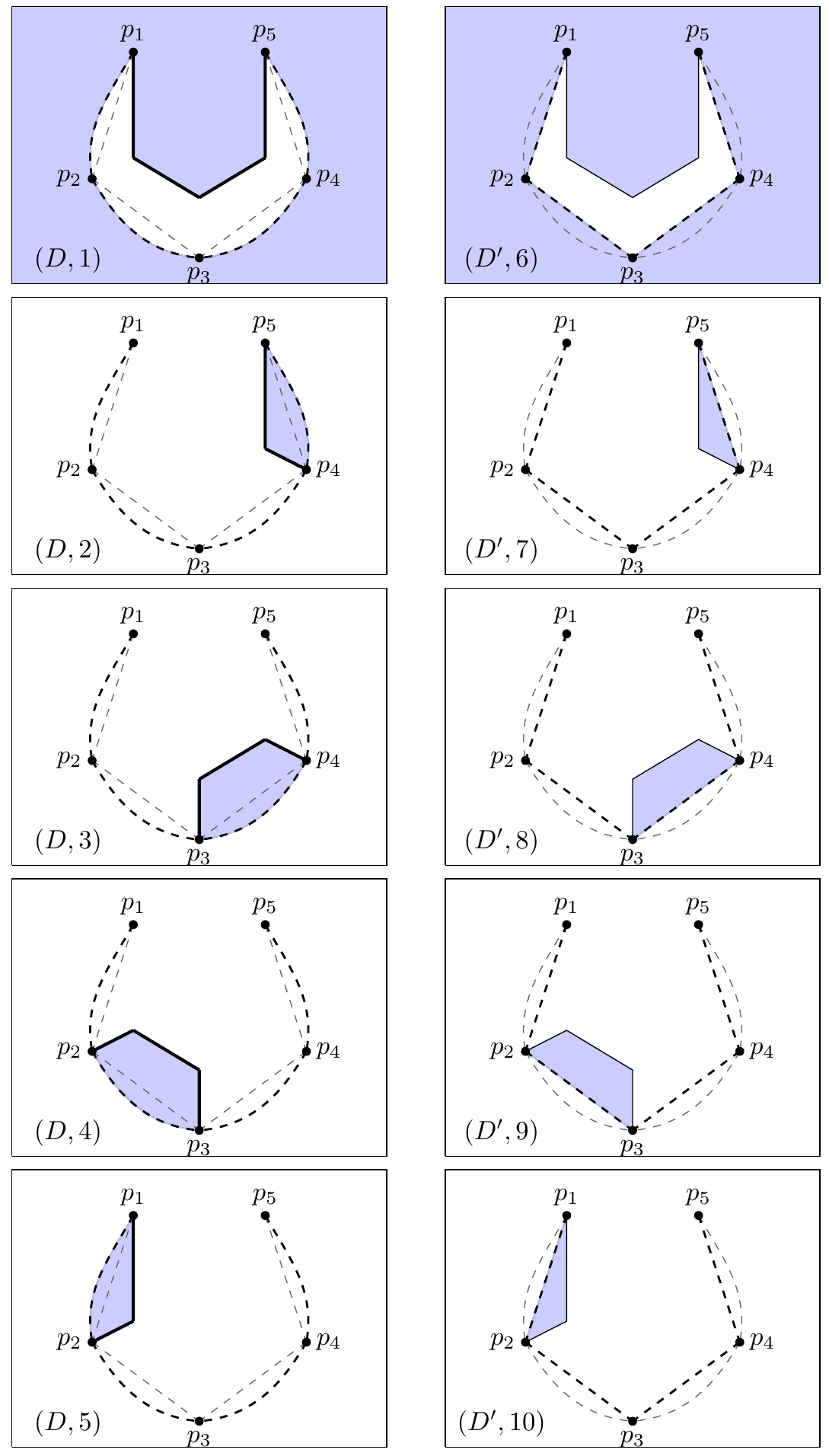}};
\end{tikzpicture}\quad\;
\begin{tikzpicture}
\node[inner sep=0pt] at (0,0)
    {\includegraphics[width=0.44\textwidth]{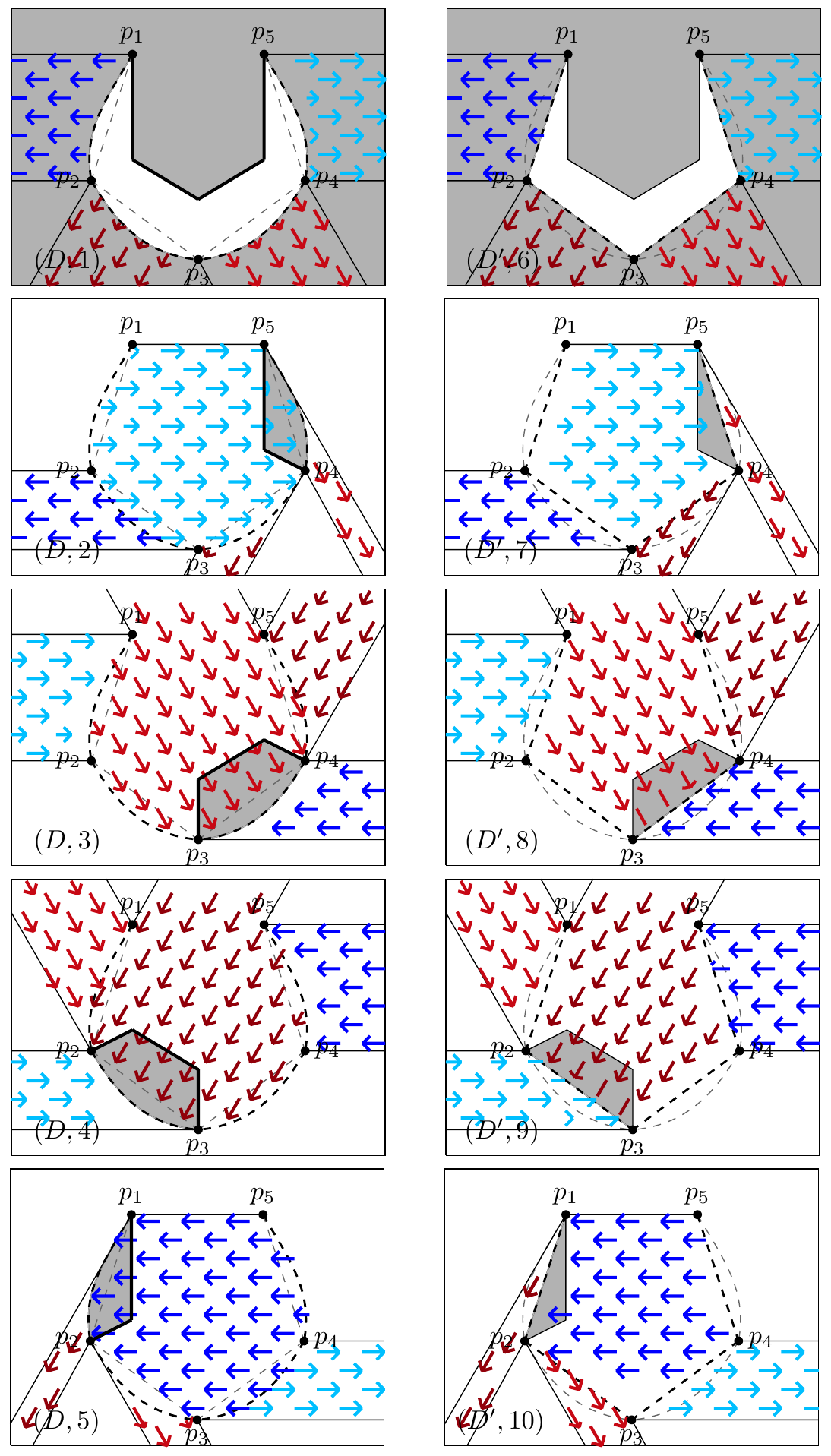}};
\end{tikzpicture}\quad\;
\caption{The minimizer $u_{min,5}$ for five points at the vertices of a regular pentagon and a calibration for the family $\mathcal{F}(\mathcal{J}_5)$}\label{cali5}
\end{figure}

\begin{rem}
In~\cite{coveringbis} we prove that 
if  $\Phi:Y\to\mathbb{R}^2$ is a calibration for $E\in\mathcal{P}_{constr}$,
then $E$ is a minimizer not only among all (constrained) finite perimeter sets, 
but also in the larger class of finite linear combinations of characteristic 
functions of (constrained) finite perimeter sets.
Then if there exists 
an element of this larger class with strictly 
less energy of the minimizer of Problem~\eqref{minpro2}, 
a calibration for such a minimizer cannot exist.
This is the case when $S=\{p_1,\ldots,p_5\}$ with $p_i$ the vertices of a regular pentagon.
This counterexample can be constructed adapting 
the example  by Bonafini~\cite{bonafini} in the framework
of rank one tensor valued measures to our setting.
Hence the tool of the calibration in families is necessary, in this case, to prove the minimality of the candidate by a calibration argument.
\end{rem}

\begin{ex}[Calibration for the vertices of the regular hexagon]

\end{ex}

We fix the points of $S$ as the vertices of a regular hexagon  in the following way:
$p_1=(-1/2,\sqrt{3}/2)$, $p_2=(-1,0)$, $p_3=(-1/2,-\sqrt{3}/2)$, $p_4=(1/2,-\sqrt{3}/2)$,
$p_5=(1,0)$, $p_6=(1/2,\sqrt{3}/2)$.

As in Example~\ref{penta} we start covering
$\mathscr{P}^T_{constr}(Y_\Sigma)$  with explicit families $\mathcal{F}(\mathcal{J})$ of competitors.
From~\eqref{spliiiiit} we get
\begin{equation*}
T = \left(  \bigcup_{ |i-j| = 3}  \mathcal{F}_{i,j} \right)  \cup 
\left( \bigcup_{ |i-j| = 2}  \mathcal{F}_{i,j}\right)\,.
\end{equation*}

For given $i,j\in \{1,\ldots,6\}$ such that $|i-j|=3$ we further split $\mathcal{F}_{i,j}$ in four classes $(\mathcal{F}^k_{i,j})_{k=1,\ldots,4}$ as follows: 
\begin{displaymath}
\mathcal{F}^1_{i,j} = \mathcal{F}_{i,j} \cap (\mathcal{F}_{i,j-1}  \cup \mathcal{F}_{i,j+1}), \qquad \mathcal{F}^2_{i,j} = \mathcal{F}_{i,j} \cap (\mathcal{F}_{i+1,j}  \cup \mathcal{F}_{i-1,j})\, ,
\end{displaymath}
\begin{displaymath}
\mathcal{F}^3_{i,j} = \mathcal{F}_{i,j} \cap (\mathcal{F}_{i,j-1}  \cup \mathcal{F}_{i+1,j}), \qquad \mathcal{F}^4_{i,j} = \mathcal{F}_{i,j} \cap (\mathcal{F}_{i-1,j}  \cup \mathcal{F}_{i,j+1})\, .
\end{displaymath}

As in the previous example,
thanks to Lemma~\ref{1split} applied to the families $ \mathcal{F}^k_{i,j}$
with $i,j\in \{1,\ldots,6\}$ and $k\in\{1,\ldots,4\}$, we can associate the 
respectively families $\mathcal{F}(\mathcal{J}_i)$ with $J_i$ defined as follows:
 \begin{align*}
&\mathcal{J}_1=\{(2,4),(2,5),(2,6),(3,5),(3,6),(4,6)\},
&\mathcal{J}_2=\{(1,3),(1,4),(1,5),(3,5),(3,6),(4,6)\},\\
&\mathcal{J}_3=\{(1,4),(1,5),(2,4),(2,5),(2,6), (4,6)\},
&\mathcal{J}_4=\{(1,3),(1,5),(2,5),(2,6),(3,5),(3,6)\},\\
&\mathcal{J}_5=\{(1,3),(1,4),(2,4),(2,6),(3,6),(4,6)\},
&\mathcal{J}_6=\{(1,3),(1,4),(1,5),(2,4),(2,5),(3,5)\},\\
&\mathcal{J}_7=\{(1,5),(2,4),(2,5),(2,6),(3,5),(3,6)\},
&\mathcal{J}_8=\{(1,3),(1,4),(2,6),(3,5),(3,6),(4,6)\},\\
&\mathcal{J}_9=\{(1,3),(1,4),(1,5),(2,4),(2,5),(4,6)\},
&\mathcal{J}_{10}=\{(1,3),(2,5),(2,6),(3,5),(3,6), (4,6)\},\\
&\mathcal{J}_{11}=\{(1,3),(1,4),(1,5),(2,4),(3,6),(4,6)\},
&\mathcal{J}_{12}=\{(1,4),(1,5),(2,4),(2,5),(2,6),(3,5)\}.\\
\end{align*}
We notice that the families $\mathcal{F}(\mathcal{J}_i)$ with $i\in\{1,\ldots,6\}$
can be obtained from the first one by a cyclic permutation of the indices.
Also the families $\mathcal{F}(\mathcal{J}_8)$ and $\mathcal{F}(\mathcal{J}_9)$
can be obtained by a cyclic permutation of the indices from $\mathcal{F}(\mathcal{J}_7)$,
and the same holds for the families $\mathcal{F}(\mathcal{J}_{10})$, $\mathcal{F}(\mathcal{J}_{11})$,
and $\mathcal{F}(\mathcal{J}_{12})$ (see Figure \ref{familieshex}).

Consider now the case in which  $|i-j|=2$ for given $i,j\in \{1,\ldots,6\}$.
Here the situation is easier, as we find two families $\mathcal{F}_{i,j}$
and it is not necessary to consider a further refinement of the classes.
In terms of $\mathcal{F}(\mathcal{J}_i)$ we get
\begin{equation*}
\mathcal{J}_{13}=\{(1,4),(2,4),(2,5),(2,6),(3,6),(4,6)\}\, ,\;
\mathcal{J}_{14}=\{(1,3),(1,4),(1,5),(2,5),(3,5), (3,6)\}\, ,
\end{equation*}
where again $\mathcal{F}(\mathcal{J}_{14})$ is obtained by a cyclic permutation of the indices from 
$\mathcal{F}(\mathcal{J}_{13})$.
In conclusion the subdivision in families is as follows: 
\begin{equation*}
\mathscr{P}^T_{constr}(Y_\Sigma)=\bigcup_{i=1}^{14}\mathcal{F}(\mathcal{J}_i)\,.
\end{equation*}

\begin{figure}[H]
\centering
\begin{tikzpicture}
\node[inner sep=0pt] at (0,0)
    {\includegraphics[width=0.231\textwidth]{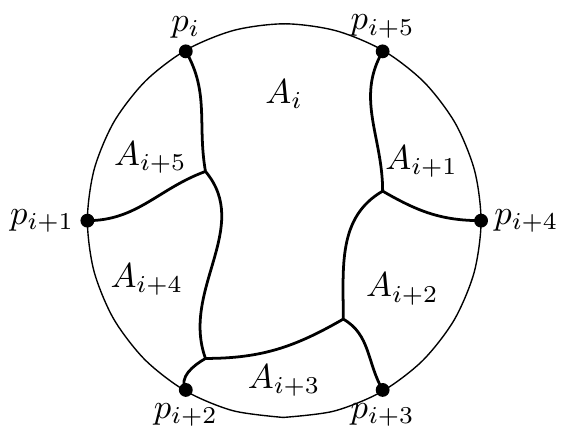}};
\end{tikzpicture}\quad
\begin{tikzpicture}
\node[inner sep=0pt] at (0,0)
    {\includegraphics[width=0.231\textwidth]{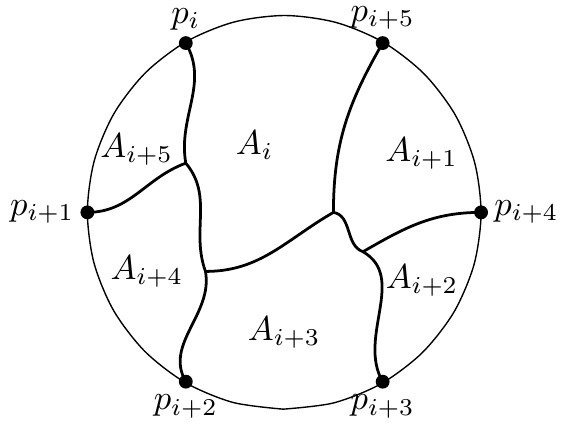}};
\end{tikzpicture}\quad
\begin{tikzpicture}
\node[inner sep=0pt] at (0,0)
    {\includegraphics[width=0.231\textwidth]{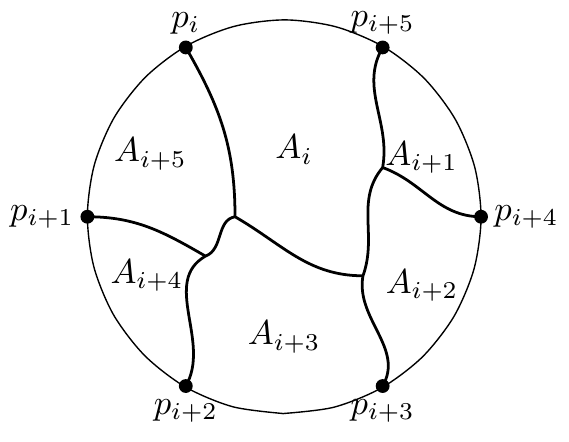}};
\end{tikzpicture}\quad
\begin{tikzpicture}
\node[inner sep=0pt] at (0,0)
    {\includegraphics[width=0.231\textwidth]{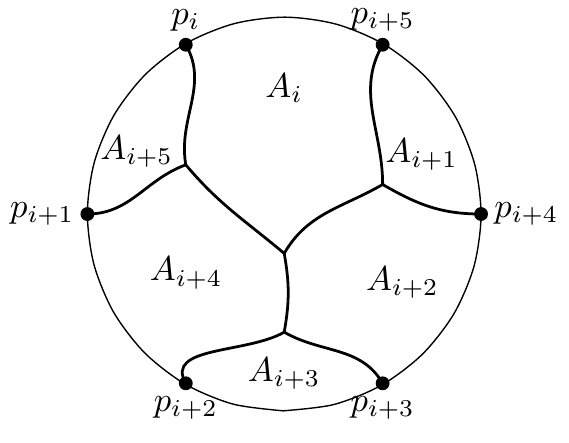}};
\end{tikzpicture}
\caption{On the left we represent the projection onto the base set of the essential boundary 
of an element of the family
$\mathcal{F}(\mathcal{J}_i)$ with $i\in\{1,\ldots,6\}$, then of the family
$\mathcal{F}(\mathcal{J}_i)$ with $i\in\{7,8,9\}$ and 
$\mathcal{F}(\mathcal{J}_i)$ with $i\in\{10,11,12\}$, on the right $\mathcal{F}(\mathcal{J}_{i})$
with $i\in\{13,14\}$. 
}\label{familieshex}
\end{figure}

It is well know that if the points of $S$ lies at the vertices of a regular hexagon,
then there are six minimizers $\mathscr{S}_i$ for the Steiner problem.
Calling $E_{\min,i}$ with $i\in\{1,\ldots,6\}$ 
the sets in $\mathscr{P}_{constr}(Y_\Sigma)$ associated to  $\mathscr{S}_i$, we have that 
$E_{\min,i}\in \mathcal{F}(\mathcal{J}_i)$ for $i\in \{1,\ldots,6\}$. 

In order to use Proposition~\ref{mincalifam}
we have to find a calibration $\Phi^i$ for an explicit set $E_{i}\in \mathcal{F}(\mathcal{J}_i)$
for every $i\in\{1,\ldots,14\}$.
The global minimizers $E_{\min,i}$ are clearly minimizers in their families,
so they are the natural candidate minimizers for the families
$ \mathcal{F}(\mathcal{J}_i)$ for $i\in\{1,\ldots,6\}$; it is more challenging to propose a minimizer for the other families.
Our candidate minimizers are shown in Figure~\ref{candidati}.

\begin{figure}[H]
\centering
\begin{tikzpicture}
\node[inner sep=0pt] at (0,0)
    {\includegraphics[width=0.231\textwidth]{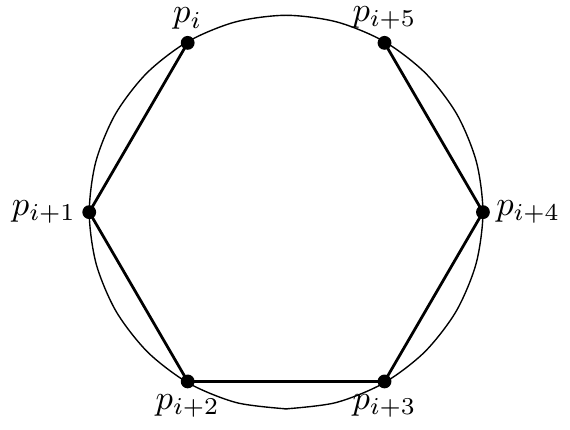}};
\end{tikzpicture}\quad
\begin{tikzpicture}
\node[inner sep=0pt] at (0,0)
    {\includegraphics[width=0.231\textwidth]{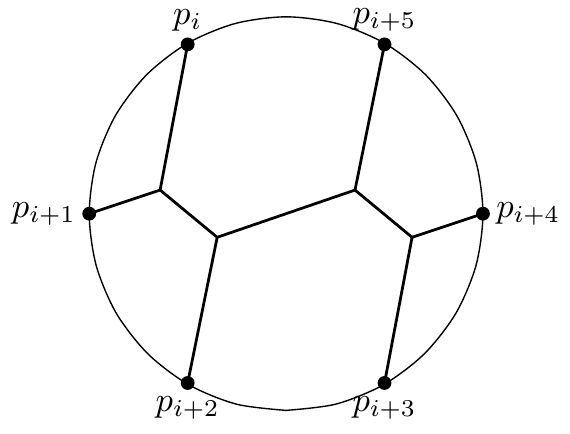}};
\end{tikzpicture}\quad
\begin{tikzpicture}
\node[inner sep=0pt] at (0,0)
    {\includegraphics[width=0.231\textwidth]{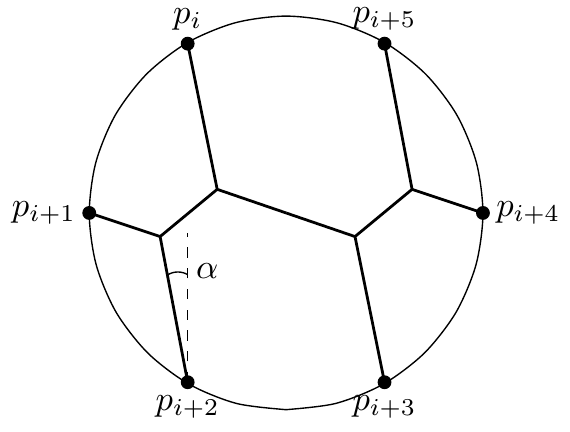}};
\end{tikzpicture}\quad
\begin{tikzpicture}
\node[inner sep=0pt] at (0,0)
    {\includegraphics[width=0.231\textwidth]{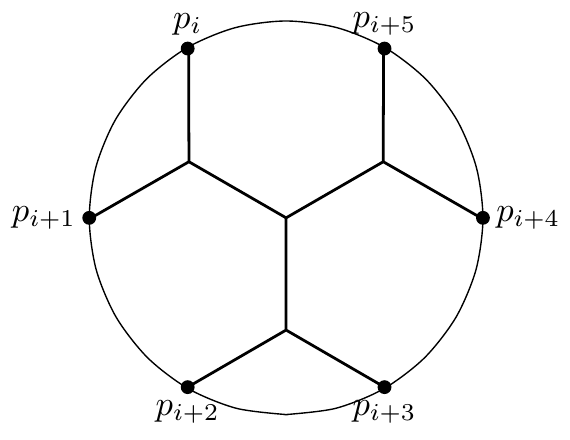}};
\end{tikzpicture}
\caption{From left to right: the projection onto the base set of the essential boundary
of candidate minimizers in the family 
$\mathcal{F}(\mathcal{J}_i)$ with $i\in\{1,\ldots,6\}$, $i\in\{7,8,9\}$
$i\in\{10,11,12\}$ and $i\in\{13,14\}$.}\label{candidati}
\end{figure}

We write explicitly the calibration $\Phi_i$ (written, as usual, by means of the pullbacks on every sheet $\Phi_i^j$) for $E_{\min,i}\in \mathcal{F}(\mathcal{J}_i)$
with $i=1,10,13$. The other calibrations are easy variants of the previous ones.
Moreover we give the expression of the vector field only inside $p^{-1}(\mbox{Conv}(S))$, as the divergence free extension in $Y_\Sigma$ can be easily achieved.
We get 
\begin{equation*}
\begin{array}{lll}
\Phi^{1}_1=(0,0)\,, &\Phi^{2}_1 =(\sqrt{3},1)\,, &\Phi^{3}_1=(\sqrt{3},-1)\,,\\
\Phi^{4}_1=(0,-2)\,,&\Phi^{5}_1=(-\sqrt{3},-1)\,,& \Phi^{6}_1=(-\sqrt{3},1)\,,\\ \\
\Phi^{1}_{10}=(0,0)\,,&\Phi^{2}_{10} =\left( \frac{3\sqrt{3}}{\sqrt{7}},\frac{1}{\sqrt{7}}\right)\,,
&\Phi^{3}_{10}=\left( \frac{2\sqrt{3}}{\sqrt{7}}\,,-\frac{4}{\sqrt{7}}\right)\,,\\
\Phi^{4}_{10}=\left( -\frac{\sqrt{3}}{\sqrt{7}}\,,-\frac{5}{\sqrt{7}}\right) \,,
&\Phi^{5}_{10}=\left( -\frac{4\sqrt{3}}{\sqrt{7}},-\frac{6}{\sqrt{7}}\right) \,, 
&\Phi^{6}_{10}=\left( -\frac{3\sqrt{3}}{\sqrt{7}},-\frac{1}{\sqrt{7}}\right) \,,\\ \\
\Phi^{1}_{13}=(0,0)\,, &\Phi^{2}_{13} =(2,0)\,, &\Phi^{3}_{13}=(1,-\sqrt{3})\,,\\
\Phi^{4}_{13}=(0,-2\sqrt{3})\,,
&\Phi^{5}_{13}=(-1,-\sqrt{3})\,, &\Phi^{6}_{13}=(-2,0)\,.
\end{array}
\end{equation*}
\begin{rem}
The vector field $\Phi_{10}$ is a calibration for the set $E\in \mathscr{P}^T_{constr}(Y_\Sigma)$
associated to the third network represented in Figure~\ref{candidati} for $i=1$.
It can be computed easily as follows: firstly we rotate the points of $S$ and the network
by a rotation of angle $\alpha = -\arctan{(\frac{1}{3\sqrt{3}})}$ with the following rotation matrix:
\begin{equation*}
R(\alpha) = \begin{bmatrix}
\frac{3\sqrt{3}}{2\sqrt{7}} & \frac{1}{2\sqrt{7}}\\
-\frac{1}{2\sqrt{7}} &\frac{3\sqrt{3}}{2\sqrt{7}}\,  \\
\end{bmatrix}
\,.
\end{equation*}
The network obtained in this way can be calibrated with the following vector field:
\begin{equation*}
\begin{array}{lll}
\widetilde{\Phi}^{1}_{10}=(0,0)\,, &\widetilde{\Phi}^{2}_{10} =(2,0)\,, &\widetilde{\Phi}^{3}_{10}=(1,-\sqrt{3})\,,\\
\widetilde{\Phi}^{4}_{10}=(-1,-\sqrt{3})\,,
&\widetilde{\Phi}^{5}_{10}=(-3,-\sqrt{3})\,, &\widetilde{\Phi}^{6}_{10}=(-2,0)\,\,.
\end{array}
\end{equation*}
Finally we rotate back with the matrix $R(-\alpha)$ 
to get $\Phi_{10}$.

\medskip

An easy computation shows that $P(E_{min,i}) < P(E_{min,j})$ for $i\in \{1,\ldots,6\}$ and $j\in \{7,\ldots,14\}$. Thus applying Proposition \ref{mincalifam} we infer that $E_{min,i}$ are minimizers of $\mathscr{A}_{constr}(S)$ for every $i=1,\ldots,6$, as we wanted to prove.
\end{rem}

\begin{rem}
The above division in families is the finest possible one.
It has the advantage that we obtain constant calibrations.
However from the numerical point of view this choice is not convenient. Indeed 
the complexity is simply shifted from solving the  non--convex
original problem to finding all the families.
It would be better to consider a more coarse division in families 
in which one can in any case find a calibration. 
\end{rem}

\section*{Appendix: Divergence theorem on $Y_\Sigma$}\label{appe}
\begin{prop}[Divergence theorem on coverings]
Consider $E,F\in \mathscr{P}_{constr}(Y_\Sigma)$ and
let $\Phi : Y_\Sigma \to \R^2$ be an approximately regular vector field such that $\div \Phi = 0$ 
(in the sense of distributions)
in $Y_\Sigma$.
Then
\begin{equation}
\int_{Y_\Sigma} \Phi \cdot D\chi_E = \int_{Y_\Sigma} \Phi \cdot D\chi_F.
\end{equation}  
\end{prop}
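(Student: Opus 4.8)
The plan is to reduce everything to the single identity $\int_{Y_\Sigma}\Phi\cdot Dw=0$ for $w:=\chi_E-\chi_F\in BV(Y_\Sigma)$, and to obtain this from $\div\Phi=0$ by integration by parts. First I would record that $w$ has bounded support. Indeed, condition $ii)$ of Definition~\ref{zerounoconstr} gives $\chi_{E^1}=\chi_{F^1}=1$ on $\R^2\setminus\Omega$, and then condition $i)$ forces $\chi_{E^j}=\chi_{F^j}=0$ there for every $j\neq 1$; hence $w=0$ on $p^{-1}(\R^2\setminus\Omega)$ and $\mathrm{supp}\,w\subset\overline{p^{-1}(\Omega)}$, a bounded subset of $Y_\Sigma$ since $\Omega$ is bounded. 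In particular $Dw$ is a compactly supported measure, and the pairing $\int_{Y_\Sigma}\Phi\cdot Dw=\int\Phi\cdot\nu_w\,d|Dw|$ is well defined: only the normal component of $\Phi$ on the rectifiable carrier of $Dw$ enters, and this is unambiguous because $\Phi$ is approximately regular, so its normal trace is single-valued by~\eqref{app}.

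Next I would remove the non-compactness of $\Phi$ by a cutoff. Fix $\eta\in C^\infty_c(Y_\Sigma)$ with $\eta\equiv 1$ on a neighbourhood of $\mathrm{supp}\,w$, so that $\int_{Y_\Sigma}\Phi\cdot Dw=\int_{Y_\Sigma}\eta\,\Phi\cdot Dw$. The field $\eta\Phi$ is bounded, compactly supported and again approximately regular, and since $\div\Phi=0$ the product rule yields $\div(\eta\Phi)=\nabla\eta\cdot\Phi$ as an $L^\infty$ function. The core of the argument is then the integration by parts formula
\begin{equation*}
\int_{Y_\Sigma}\eta\,\Phi\cdot Dw=-\int_{Y_\Sigma} w\,\div(\eta\Phi)\,d\mu=-\int_{Y_\Sigma} w\,(\nabla\eta\cdot\Phi)\,d\mu\,,
\end{equation*}
whose right-hand side vanishes because $\nabla\eta\equiv 0$ on $\mathrm{supp}\,w$. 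This gives $\int_{Y_\Sigma}\Phi\cdot Dw=0$ and hence~\eqref{booh}.

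The delicate point, which I expect to be the main obstacle, is justifying the integration by parts for the merely approximately regular (hence possibly discontinuous) field $\eta\Phi$. I would argue sheetwise. Using the representation~\eqref{derivata} and the local parametrizations of Definition~\ref{local}, the integral $\int_{Y_\Sigma}\eta\Phi\cdot Dw$ splits as $\sum_{j=1}^m\int_D(\eta\Phi)^j\cdot Dw^j+\sum_{j'=m+1}^{2m}\int_{\Sigma}(\eta\Phi)^{j'}\cdot Dw^{j'}$, with $w^j=\chi_{E^j}-\chi_{F^j}$. On each copy the pullback $(\eta\Phi)^j$ is a bounded, compactly supported, approximately regular field on a Euclidean domain, so the Euclidean divergence theorem for approximately regular vector fields (in the slightly strengthened form of~\cite{mumford}) applies: the jumps of $\Phi$ across Lipschitz hypersurfaces carry no divergence mass precisely because their two one-sided normal traces coincide by~\eqref{app}, so Gauss--Green holds with no interface contribution. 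It then remains to check that the boundary terms produced along the cuts $\Sigma,\Sigma'$ by pairs of adjacent sheets cancel thanks to the identifications~\eqref{ide} defining $\sim$; summing over $j$ and $j'$ reassembles exactly $-\int_{Y_\Sigma}w\,\div(\eta\Phi)\,d\mu$, completing the proof.
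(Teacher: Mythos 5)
Your argument is correct and follows essentially the same route as the paper's proof: both reduce to the sheetwise decomposition \eqref{derivata}, apply the Euclidean divergence theorem for approximately regular fields from~\cite{mumford} on each copy of $D$, and rely on the identifications along the cuts together with the single-valuedness of the normal trace \eqref{app} to cancel the interface contributions. The only difference is packaging --- you pair a cutoff of $\Phi$ against $w=\chi_E-\chi_F$, which has compact support, whereas the paper evaluates $\int_{Y_\Sigma}\Phi\cdot D\chi_E$ and $\int_{Y_\Sigma}\Phi\cdot D\chi_F$ separately and identifies both with the same boundary integral over $\partial\Omega_\varepsilon$; the cut-cancellation you state but leave unexecuted is precisely the computation the paper carries out in \eqref{approx}--\eqref{t2}.
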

\begin{proof}
From~\eqref{derivata},
one gets:
\begin{equation*}
D\chi_E = \sum_{j=1}^m D\chi_{E^j} \res D  
+ \sum_{j'=m+1}^{2m} D\chi_{E^{j'}}\res (\Sigma\setminus S)\,.
\end{equation*}
Fix  $\varepsilon>0$ and call $\Omega_\varepsilon$ the $\varepsilon$--tubular neighbourhood
of $\Omega$. Using the divergence theorem for approximately regular vector fields in 
$D\subset \R^2$ (see~\cite{mumford}) and the definition of $D\chi_{E^{j'}}$ we have
\begin{align}
&  \int_{Y_\Sigma} \Phi \cdot D\chi_E
= \sum_{j=1}^m \int_{D} \Phi^j\cdot D\chi_{E^j}
+ \sum_{j'=m+1}^{2m} \int_{\Sigma} \Phi^{j'}\cdot D\chi_{E^{j'}}\nonumber\\
=&-\sum_{j=1}^m  \int_{\Sigma} \big[(\Phi^j)^+ \chi_{E^j}^+ - (\Phi^j)^-\chi_{E^j}^-\big] \cdot \nu_\Sigma\, d\Ha^{1} 
- \sum_{j=1}^m \int_{\partial \Omega_\varepsilon} \chi_{E^{j}}\Phi^j  
\cdot \nu_{\partial \Omega_\varepsilon}\, d\Ha^1
\nonumber\\
+& \sum_{j'=m+1}^{2m} \int_{\Sigma} \big[(\chi_{E^{j'}})^+ 
- (\chi_{E^{j'}})^-\big]\Phi^{j'} \cdot \nu_\Sigma \, d\Ha^1\, ,\label{for}
\end{align}
where $ \nu_{\partial \Omega_\varepsilon}$ is the inner unit normal 
to $\partial \Omega_\varepsilon$.
From now on we will call
\begin{equation*}
a_{h}^+:=(\Phi^h)^+ \cdot \nu_{\Sigma}, 
\quad a_{h}^-:=(\Phi^h)^- \cdot \nu_{\Sigma} \quad \mbox{ for } h=1,\ldots,2m\, .
\end{equation*}
As $\Phi$ is approximately regular (see Definition \ref{app} and Definition \ref{appreg}) one has that
\begin{equation}\label{approx}
a_{j'}^+(x) = a^-_{j'}(x) = (\Phi^{j'} \cdot \nu_\Sigma)(x) \quad \mbox{ for }\Ha^1- a.e\  x\in \Sigma 
\end{equation}
for every $j'=m+1, \ldots, 2m$. Moreover as $E\subset Y_\Sigma$, one can verify that
\begin{equation}\label{t1}
\sum_{j=1}^m a^+_j(x) (\chi_{E^j})^+(x) = \sum_{j'=m+1}^{2m} a^+_{j'}(x) (\chi_{E^{j'}})^+(x)
\end{equation}
and
\begin{equation}\label{t2}
\sum_{j=1}^m a^-_j(x) (\chi_{E^j})^-(x) = \sum_{j'=m+1}^{2m} a^-_{j'}(x) (\chi_{E^{j'}})^-(x)
\end{equation}
for $\Ha^1$- a.e  $x\in \Sigma$.\\
Using \eqref{approx}, \eqref{t1}, \eqref{t2} on Formula \eqref{for} it is easy to see that
\begin{equation*}
\int_{Y_\Sigma} \Phi \cdot D\chi_E 
=- \sum_{j=1}^m \int_{\partial \Omega_\varepsilon} \Phi^j \chi_{E^{j}} 
\cdot \nu_{\partial \Omega_\varepsilon}\, d\Ha^1 \, 
\end{equation*}
and analogously
\begin{equation*}
\int_{Y_\Sigma} \Phi \cdot D\chi_F
=- \sum_{j=1}^m \int_{\partial \Omega_\varepsilon} \Phi^j \chi_{F^{j}} 
\cdot \nu_{\partial \Omega_\varepsilon}\, d\Ha^1\, .
\end{equation*}
As $E, F \in \mathscr{P}_{constr}(Y_\Sigma)$, the functions $\chi_{E^j}$ and $\chi_{F^j}$ 
have same boundary conditions on $\partial \Omega_\varepsilon$ for every $j$. This gives equation \eqref{booh} as we wanted to prove.
\end{proof}

\newpage

\bibliographystyle{abbrv}
\bibliography{cali_c_p}

\end{document}